\newcommand{\thickhline}{%
    \noalign {\ifnum 0=`}\fi \hrule height 1pt
    \futurelet \reserved@a \@xhline
}
\definecolor{darkblue}{rgb}{0.0,0,0.7} 
\definecolor{darkred}{rgb}{0.7,0,0} 
\numberwithin{equation}{section}
\newcommand{\Z}{\mathbb{Z}}
\newcommand{\R}{\mathbb{R}}
\newcommand{\llangle}{\langle\langle}
\newcommand{\rrangle}{\rangle\rangle}
\newcommand{\N}{\mathbb{N}}
\newcommand{\C}{\mathbb{C}}
\newcommand{\B}{\mathcal{B}}
\newcommand{\im}{\text{Im}}
\newcommand{\s}{\sigma}
\newcommand{\into}{\hookrightarrow}
\newcommand{\xto}{\xrightarrow}
\newcommand{\col}[6]{\left\{\!\!\!\left\{\begin{pmatrix}#1\\#2\end{pmatrix},\begin{pmatrix}#3\\#4\end{pmatrix},\begin{pmatrix}#5\\#6\end{pmatrix}\right\}\!\!\!\right\}}
\newcommand*\bigcdot{\mathpalette\bigcdot@{.5}}
\newcommand*\bigcdot@[2]{\mathbin{\vcenter{\hbox{\scalebox{#2}{$\m@th#1\bullet$}}}}}
\theoremstyle{plain}
\newtheorem{theorem}{Theorem}[section]
\newtheorem{lemma}[theorem]{Lemma}
\newtheorem{proposition}[theorem]{Proposition}
\newtheorem{corollary}[theorem]{Corollary}
\theoremstyle{definition}
\newtheorem{definition}[theorem]{Definition}
\newtheorem{remark}[theorem]{Remark}
\newtheorem{notation}[theorem]{Notation}
\newtheorem{RemConj}[theorem]{Remark/Conjecture}
\title[$J-$braid groups are torus necklace groups]{$J-$braid groups are torus necklace groups}
\author{Igor Haladjian}
\address{Institut Denis Poisson, CNRS UMR 7019, Faculté des Sciences et Techniques, Université de Tours, Parc de Grandmont, 
37200 TOURS, France}
\DeclareRobustCommand{\SkipTocEntry}[5]{}
\begin{document}

\thispagestyle{empty}

\begin{abstract}
We construct a family of links we call torus necklaces for which the link groups are precisely the braid groups of generalised $J$-reflection groups. Moreover, this correspondence exhibits the meridians of the aforementioned link groups as braid reflections. In particular, this construction generalises to all irreducible rank two complex reflection groups a well-known correspondence between some rank two complex braid groups and some torus knot groups. In addition, as abstract groups, we show that the family of link groups associated to Seifert links coincides with the family of circular groups. This shows that every time a link group has a non-trivial center, it is a Garside group.
\end{abstract}
\maketitle

\tableofcontents
\section{Introduction}
The 3-strand braid group $B_3$ is isomorphic to the knot group of the trefoil knot (also known as the $(2,3)$-torus knot). The aim of this article is to show that this isomorphism is part of a more general correspondence between some reflection groups and some link groups.\\
The 3-strand braid group belongs to the family of \textit{complex braid groups}. A \textit{complex reflection group} is a finite subgroup of $\mathrm{GL}_n(\C)$ ($n\in\N^*$) generated by complex reflections (a complex reflection is an element $r\in\mathrm{GL}_n(\C)$ of finite order whose set of fixed points is a complex hyperplane). Moreover, given a complex reflection group $W\subset\mathrm{GL}_n(\C)$, its \textit{rank} is defined as the dimension of the orthogonal complement of the subspace of $\C^n$ fixed pointwise by the action of $W$. See for example \cite{Broue} and \cite{LT} for a detailed exposition on complex reflection groups.\\
Given a complex reflection group $W$, one can define its complex braid group $\B_\C(W)$ as the fundamental group of the so-called \textit{regular orbit space} of $W$ (see \cite{Broue} and \cite{BMR} for a detailed exposition). For any complex reflection group $W$, there is a natural quotient $\B_\C(W)\to W$ (the group $W$ is realised as the group of deck transformations of a normal covering of its regular orbit space).
Seeing the symmetric group $S_n$ as a real (hence complex) reflection group, the $n$-strand braid group $B_n$ is realised as $\B_\C(S_n)\cong B_n$ and the natural quotient $B_n\to S_n$ is the quotient which sends a braid diagram to the corresponding permutation diagram.\\
In \cite[Tables 1-4]{BMR}, Broué, Malle and Rouquier gave group presentations for all irreducible complex reflection groups, which we refer to as the BMR presentations. 
They also conjectured that given a complex reflection group $W$ different from $G_{24}, G_{27}, G_{29}$, $G_{33}$ or $G_{34}$, removing the torsion relations of its BMR presentation gives a presentation for $\B_\C(W)$. This conjecture turned out to be true after the work of many mathematicians (see \cite{Bannai},\cite{BessisKP},\cite{BessisMichel},\cite{BrieskornF},\cite{BMR} and \cite{Garnier31}). It was also shown in \cite{BessisKP} and \cite{BMR} that the center of all complex braid groups is infinite cyclic and that for all complex reflection groups $W$ the image of $Z(\B_\C(W))$ under the natural quotient $\B_\C(W)\to W$ is $Z(W)$.

From now on, we will focus on a generalisation of rank two complex reflection groups and their braid groups. In \cite{AA}, Achar and Aubert defined $J$-groups (see Definition \ref{DefJGroups}), a family of groups generalising rank two complex reflection groups. The $J$-groups are defined as normal closure of elements in groups defined by generators and relations and while they appear as a natural generalisation of irreducible rank two complex reflection groups, their initial definition do not provide presentations by generators and relations. In fact, it is not known whether all $J$-groups are finitely generated or not.\\
The author then defined \textit{$J$-reflection groups} (see Definition \ref{DefJReflectionGroups}), a family of 
$J$-groups that includes all irreducible rank-two complex reflection groups which is easier to study than the whole family of $J$-groups. Specialising \cite[Theorem 1.2]{AA} to $J$-reflection groups, we have

\begin{theorem}
A group is isomorphic to a finite $J$-reflection group if and only if it is an irreducible complex reflection group of rank two.
\end{theorem}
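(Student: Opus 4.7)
The plan is to deduce the statement directly from \cite[Theorem 1.2]{AA}, which asserts the analogous equivalence for the larger class of $J$-groups, and then to add the verification that the $J$-group structure attached to each irreducible rank two complex reflection group can in fact be chosen to be a $J$-reflection group structure. So there are two inclusions to establish.

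For the ``only if'' direction, suppose $G$ is a finite $J$-reflection group. By Definition \ref{DefJReflectionGroups}, $G$ is in particular a $J$-group, and then by \cite[Theorem 1.2]{AA} any finite $J$-group is an irreducible complex reflection group of rank two. This half requires no further work beyond recalling that the class of $J$-reflection groups is a sub-class of the class of $J$-groups.

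For the ``if'' direction, let $W \subset \mathrm{GL}_2(\C)$ be an irreducible complex reflection group of rank two. By \cite[Theorem 1.2]{AA}, $W$ carries the structure of a finite $J$-group; the task is to upgrade this $J$-group datum into a $J$-reflection group datum in the sense of Definition \ref{DefJReflectionGroups}. The natural strategy is case-by-case along the Shephard--Todd classification of irreducible rank two complex reflection groups (the infinite family $G(de,e,2)$ together with the exceptional groups $G_4,\dots,G_{22}$). For each case one takes the BMR presentation from \cite[Tables 1--4]{BMR}, reads off the generators of finite order (braid reflections projecting to complex reflections) together with their braid-like relations, and checks that the resulting data matches the axioms of a $J$-reflection group.

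The main obstacle is not conceptual but bookkeeping: one must verify that the extra conditions distinguishing $J$-reflection groups from general $J$-groups are compatible with the presentations of \cite{BMR} in every case. Once these conditions are checked uniformly on the infinite family $G(de,e,2)$ and one by one on the finitely many exceptional groups, the two inclusions combine to give the stated equivalence.
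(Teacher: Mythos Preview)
Your proposal is correct and mirrors the paper's own treatment: the paper simply cites \cite[Theorem 1.2]{AA} for the forward direction and refers to the case-by-case verification in \cite[Table 1]{VCRG} (see also \cite[Remark 2.44]{VCRG}) for the converse, which is exactly the Shephard--Todd case check you outline. One minor remark: your phrasing about ``upgrading'' the $J$-group datum from \cite{AA} is slightly roundabout---in practice one just exhibits each irreducible rank two complex reflection group directly as a $J$-reflection group from its BMR presentation, without passing through the intermediate $J$-group structure---but your described strategy does exactly this anyway.
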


In \cite{VCRG} the author gave a group presentation (see Presentation \eqref{GeneralPresW}) for any $J$-reflection group $W$, which coincides with its BMR presentation whenever $W$ is finite (hence a rank two complex reflection group). Mimicking the correspondence between the BMR presentation of a complex reflection groups and that of its associated braid group, one can define (combinatorially) the $J$-braid group $\B_J(W)$ of any $J$-reflection group $W$ by removing the torsion relations of Presentation \eqref{GeneralPresW} (see Presentation \eqref{BraidPres}). By construction, whenever $W$ is finite we have $\B_J(W)\cong\B_\C(W)$. \\
In this context, the conjugates of non-trivial powers of generators of Presentation \eqref{GeneralPresW} play a role of generalised reflections and coincide with actual reflections whenever $W$ is finite. Furthermore, the conjugates of non-trivial powers of generators of Presentation \eqref{BraidPres} play a role of braid reflections and coincide with actual braid reflections whenever $W$ is finite. One can then define \textit{reflecting hyperplanes} (see \cite[Definition 2.7]{Gobet Toric}). In \cite{Gobet Toric}, Gobet studied what he called \textit{toric reflection groups}, which are groups given by quotients of torus knot groups by (normal closures of) given powers of the meridians (see figure \ref{FigureMeridianIntro} for an illustration of meridians).
\begin{definition}[{\cite[Introduction]{Gobet Toric}}]
Let $n,m\in\N^*$ be coprime integers and $k\in\N_{\geq 2}$. Writing $\mathcal L(n,m)$ the knot group associated to the $(n,m)$-torus knot, define $W(k,n,m)$ as the quotient of $\mathcal L(n,m)$ by the normal closure of the $k$-th power of a preferred meridian.
\end{definition}

As such, the group $\mathcal L(n,m)$ can be defined (combinatorially) as the braid group of $W(k,n,m)$, the meridians of $\mathcal L(n,m)$ playing the role of braid reflections and their image by the quotient $\mathcal L(n,m)\to W(k,n,m)$ playing the role of reflections. Whenever $W(k,n,m)$ is finite it is always isomorphic to an irreducible rank two complex reflection group and the above procedure precisely describes its reflections, its braid group and the associated braid reflections. Moreover, Gobet showed that the family of toric reflection groups coincides with the family of $J$-reflection groups having exactly one conjugacy class of reflecting hyperplanes. In this article, we extend the correspondence to all $J$-reflection groups. More precisely, we exhibit links that we call \textit{torus necklaces} which will be playing the same role as torus knots do for toric reflection groups. \\

Alexander proved in \cite{Alexander} that every link is the closure of a braid. Moreover, in \cite{ArtinZopfe}, Artin introduced a right action of $B_n$ on the free group of rank $n$ and showed that it can be used to compute group presentations of link groups. More precisely, using Alexander's theorem every link can be written as the closure of some braid $\beta$ and we have the following:
\begin{theorem}[{\cite[Satz 6]{ArtinZopfe}, see also \cite[Theorem 2.2]{BirmanBook}}]\label{LinkGroupIntro}
Let $\beta \in B_n$. The link group $\pi_1(S^3\backslash \widehat{\beta})$ admits the presentation 
\begin{equation}\label{LinkPresIntro}
    \begin{aligned}
     \left\langle x_1,\dots,x_n\, | \, x_i=x_i\bigcdot \beta, \, \forall i\in [n] \right \rangle
    \end{aligned}.
\end{equation}
Moreover, for all $i\in [n]$ the generator $x_i$ represents the meridian which loops once around the $i$-th strand of $\beta$.
\end{theorem}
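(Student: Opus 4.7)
The plan is to combine a Heegaard-type decomposition of $S^3$ adapted to the braid closure with a mapping torus computation that brings in the Artin right action naturally. Place $\beta$ inside the cylinder $C = D^2 \times [0,1]$ with its $n$ strands running from $D^2\times\{0\}$ to $D^2\times\{1\}$, and view $\widehat{\beta}$ as lying in the standard solid torus $V_1 \subset S^3$ obtained from $C$ by identifying top and bottom disks via the identity. Let $V_2 = \overline{S^3 \setminus V_1}$ be the complementary solid torus, so that $\widehat{\beta}\subset \mathrm{int}(V_1)$ and $S^3 \setminus \widehat{\beta}$ is covered by $V_1 \setminus \widehat{\beta}$ and $V_2$, with intersection $\partial V_1 = T^2$.

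First I would compute $\pi_1(V_1 \setminus \widehat{\beta})$. The open cylinder minus the braid deformation retracts onto $(D^2 \setminus \{n\text{ points}\}) \times \{0\}$, so its fundamental group is the free group $F_n = \langle x_1, \ldots, x_n\rangle$ generated by small meridians of the strands at the bottom disk. If $y_1, \ldots, y_n$ are the analogous meridians at the top disk, then pushing each $y_i$ down through the braid expresses it in terms of the $x_j$'s; this is exactly the definition of Artin's right action and gives $y_i = x_i \bigcdot \beta$. Closing the cylinder to form $V_1$ identifies each $y_i$ with $x_i$, realising $V_1 \setminus \widehat{\beta}$ as the mapping torus of the self-homeomorphism of the punctured disk induced by $\beta$; this yields
\[
\pi_1(V_1 \setminus \widehat{\beta}) \;=\; \left\langle x_1, \ldots, x_n, t \;\middle|\; t x_i t^{-1} = x_i \bigcdot \beta,\ i \in [n]\right\rangle,
\]
with $t$ a longitude of $V_1$ disjoint from $\widehat{\beta}$.

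Next I would apply Seifert--van Kampen to glue in $V_2$ across $T^2$. In the basis of $\pi_1(T^2)$ given by the meridian $m$ and longitude $\ell$ of $V_1$, one has $\ell = t$ inside $V_1 \setminus \widehat{\beta}$, while $m = x_1 x_2 \cdots x_n$ since a meridian disk of $V_1$ meets $\widehat{\beta}$ transversally in one point per strand. Inside $V_2$ the longitude $\ell$ bounds a meridian disk and is therefore trivial, while $m$ generates $\pi_1(V_2) \cong \Z$. Amalgamating, the relation $t = 1$ collapses the HNN conjugation relations to precisely $x_i = x_i \bigcdot \beta$, and the generator contributed by $V_2$ is already a word in the $x_j$'s, hence redundant. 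One thus recovers Presentation \eqref{LinkPresIntro}, and by construction each $x_i$ is a small loop around the $i$-th strand of $\widehat{\beta}$, as claimed.

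The main obstacle is making the identification $y_i = x_i \bigcdot \beta$ rigorous with consistent conventions: the direction in which the braid is read, the handedness chosen for the meridian loops, and the choice of right versus left action all interact, and must be fixed coherently at the outset so that the relations come out in the stated form rather than as their inverses. Once that bookkeeping is pinned down, the van Kampen computation and the elimination of $t$ and the $V_2$-generator are entirely routine. An alternative I would keep in reserve is to bypass the mapping-torus step and instead derive the presentation directly from the Wirtinger presentation of a planar diagram of $\widehat{\beta}$, simplifying by using the crossings to express over-strand meridians in the standard way; the two approaches produce the same presentation and can be used to cross-check conventions.
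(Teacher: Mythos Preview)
The paper does not give its own proof of this statement: Theorem~\ref{LinkGroupIntro} (and its restatement as Theorem~\ref{LinkGroup}) is quoted as a classical result, with citations to Artin's original paper and Birman's book, and is used as a black box throughout the article. There is therefore nothing to compare against on the paper's side.

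That said, your proposal is a correct and standard proof of the result. The mapping-torus description of $V_1\setminus\widehat{\beta}$ gives exactly the HNN presentation you wrote, and the Seifert--van~Kampen step is clean. Two small points worth making explicit: first, for the amalgamation to be well-posed one should check that the image of $\pi_1(T^2)$ in $\pi_1(V_1\setminus\widehat{\beta})$ is genuinely abelian, i.e.\ that $t$ commutes with $m=x_1\cdots x_n$; this follows from the fact that the Artin action fixes the full product $x_1\cdots x_n$ (this is precisely Remark~\ref{StableProduct} in the paper). Second, your identification ``$\ell$ bounds in $V_2$'' is using that the longitude of $V_1$ is the meridian of $V_2$; you have this right, but it is the step most prone to slipping when conventions are set. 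Your honest flagging of the orientation/handedness bookkeeping as the main obstacle is exactly right, and your suggested cross-check via the Wirtinger presentation is the standard way to nail down the signs.
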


\begin{figure}[h]
    \centering
        \includegraphics[width=0.5\textwidth]{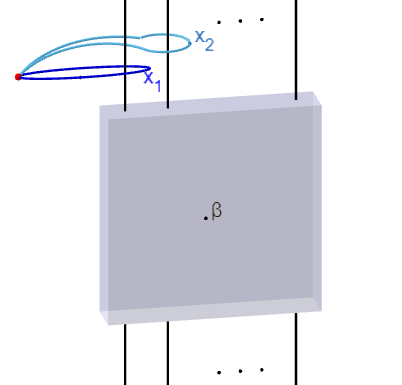}
        \caption{Generators of Presentation \eqref{LinkPresIntro} as meridians}
        \label{FigureMeridianIntro}
\end{figure}

The trefoil knot belongs to the family of torus knots. It turns out that whenever a knot group admits a non-trivial center, it is isotopic to a torus knot:

\begin{theorem}[{\cite{BZTorus}}]
Let $K$ be a knot. The knot group associated to $K$ has a non-trivial center if and only if $K$ is isotopic to a torus knot.
\end{theorem}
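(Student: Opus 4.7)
The plan is to treat the two implications separately. The reverse direction is direct: the $(n,m)$-torus knot group admits the well-known presentation $\langle a,b \mid a^n = b^m\rangle$ with $\gcd(n,m)=1$, and the element $z := a^n = b^m$ commutes with both generators, hence lies in the center. Since knot complements are aspherical (by the sphere theorem of Papakyriakopoulos, they are Eilenberg--MacLane spaces of finite cohomological dimension), knot groups are torsion-free, so $z$ generates an infinite cyclic central subgroup.

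For the forward direction, let $M := S^3 \setminus N(K)$ denote the knot complement, a compact irreducible $3$-manifold whose boundary is an incompressible torus. Assuming $Z(\pi_1(M))$ is non-trivial, I would first argue that this center is infinite cyclic, using torsion-freeness together with the fact that the only rank two abelian subgroup of a knot group is (conjugate to) the peripheral $\mathbb{Z}^2$, which cannot be central. The crucial step is then to produce, from a non-trivial central element, a Seifert fibration on $M$: intuitively, a non-trivial central class should arise from a circle action, and indeed the Seifert fibered space theorem in the relevant form for Haken manifolds with boundary provides exactly such a fibration. Once $M$ is known to be Seifert fibered, the classification finishes the job: embedding a Seifert fibered manifold as a knot complement in $S^3$ forces the base orbifold to be a disk with at most two cone points of coprime multiplicities $(n,m)$, and checking which meridional Dehn filling yields $S^3$ identifies $M$ as the exterior of the $(n,m)$-torus knot (the unknotted case $(1,m)$ being degenerate).

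The main obstacle is the passage from a non-trivial central element to a Seifert fibration. Historically, Burde and Zieschang proved this by direct combinatorial-topological arguments on the knot complement, well before the general Seifert fibered space theorem of Casson--Jungreis and Gabai was available; their method exploits specific features of knot groups such as their Wirtinger presentations and the precise structure of the peripheral subgroup. A modern reworking could instead invoke the general theorem as a black box and concentrate on the classification of Seifert fibered manifolds that arise as complements of knots in $S^3$, which is a finite combinatorial check once the base orbifold and the meridian slope are understood.
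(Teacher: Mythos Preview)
The paper does not contain a proof of this statement: it is cited as a classical result of Burde and Zieschang \cite{BZTorus} in the introduction, serving only as motivation and background for the more general Theorem~\ref{BurdeMurasugiIntro} of Burde and Murasugi. There is therefore no ``paper's own proof'' to compare your proposal against.

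That said, your outline is a reasonable sketch of how the result is established. The reverse direction is correct as you state it. For the forward direction, you correctly identify the key step as passing from a non-trivial central element to a Seifert fibration, and you correctly note that the original 1966 argument of Burde and Zieschang predates the general Seifert fibered space theorem and instead proceeds by direct analysis of knot group presentations and the peripheral structure. Your proposed modern route via the Casson--Jungreis/Gabai theorem is valid but anachronistic relative to the cited source; if you intend to reconstruct the \emph{original} proof, you would need to engage with the combinatorial group theory of the Wirtinger presentation rather than invoke the later black box.
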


The classification of links whose link group have a non-trivial center was later generalised by Burde and Murasugi in \cite{SeifertLinks}. In order to state their result, we define the following links:
Let $n,m\in \N^*$. The link $\mathrm L_*^*(n,m)$ is the disjoint union of the $(n,m)$-torus link with a circle around the internal core of the torus and a circle around the external core of the torus (see Figure \ref{L23} for illustrations). Moreover, the link $\mathrm L_*(n,m)$ (respectively $\mathrm L^*(n,m)$) is the disjoint union of the $(n,m)$-torus link with a circle around the internal core (respectively around the external core) of the torus. Finally, the link $\mathrm L(n,m)$ is the $(n,m)$-torus link. A link of the form $\mathrm L_*^*(n,m),\mathrm L_*(n,m),\mathrm L^*(n,m)$ or $\mathrm L(n,m)$ is called a torus necklace. A link group associated to a torus necklace is then called a \textit{torus necklace group}.\\
In addition, Define $K(k)$ to be the key-chain link with $k+1$ connected components: the link consisting of $k$ disjoint circles around the  external core of the torus along with a circle around the internal core of the torus (see Figure \ref{KeyChainExamples} for an illustration). The classification result then goes as follows:

\begin{theorem}[{\cite[Theorem 1]{SeifertLinks}}]\label{BurdeMurasugiIntro}
Let $L$ be a link. The link group associated to $L$ has a non-trivial center if and only if $L$ is isotopic to a torus necklace or a key-chain link.
\end{theorem}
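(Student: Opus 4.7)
The plan is to exploit the tight connection between $3$-manifolds with non-trivially centered fundamental group and Seifert fibered spaces. Recall the classical theorem of Waldhausen, extended by Gordon--Heil and Jaco--Shalen: a compact orientable irreducible $3$-manifold $M$ with non-empty boundary and non-trivial center $Z(\pi_1(M))\neq 1$ is Seifert fibered, the infinite cyclic center being generated (up to finite index) by the class of a regular fiber. This is the engine I would crank.

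For the \emph{only if} direction, start from a link $L\subset S^3$ whose complement $M = S^3\setminus\nu(L)$ has non-trivial center. First reduce to the irreducible case: if $L = L_1\sqcup L_2$ is split, then by Van Kampen $\pi_1(M)\cong\pi_1(S^3\setminus L_1)*\pi_1(S^3\setminus L_2)$, and a free product has trivial center unless one factor is trivial, i.e. unless $L_2$ is an unknot split from $L_1$. Thus split pieces contribute unknot components living inside balls disjoint from the rest; tracking these will eventually yield the key-chain configuration. For the remaining non-split piece, apply the Seifert fibered theorem above. Because any Seifert fibered submanifold of $S^3$ arising in this way extends to a Seifert fibration of $S^3$ itself, and the only Seifert fibrations of $S^3$ are (up to isotopy) the $(n,m)$-Hopf fibrations, the components of $L$ must be fibers of such a fibration. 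Cataloguing which fibers to delete yields the five cases: regular fibers alone give $\mathrm{L}(n,m)$; adjoining the internal exceptional fiber gives $\mathrm{L}_*(n,m)$; the external one gives $\mathrm{L}^*(n,m)$; both together give $\mathrm{L}_*^*(n,m)$; and the degenerate regime (when the generic fibers are themselves unknotted and parallel, i.e.\ essentially $(n,m) = (1,0)$) together with the internal core produces the key-chain family $K(k)$.

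For the \emph{if} direction, for each listed link type I exhibit a Seifert fibration of its complement directly: take the Hopf-type fibration of $S^3$ whose regular fibers are the $(n,m)$-torus link and restrict to $M$. A regular fiber is ambient isotopic to every other regular fiber in $M$, so its free homotopy class is preserved under conjugation by every loop; for an infinite cyclic fiber subgroup this forces centrality. Non-triviality of the fiber class follows because in each of our five cases the base orbifold has enough structure (positive orbifold Euler characteristic can be ruled out, or exceptional fibers block the fiber from bounding) to keep the fiber class of infinite order.

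The main obstacle is the very first step: the theorem identifying $3$-manifolds with centered $\pi_1$ as Seifert fibered is a deep structural result that I would invoke rather than reprove, since recognising a link complement as Seifert fibered from purely group-theoretic data is not elementary. A secondary technical obstacle is the explicit enumeration of fiber configurations inside the $(n,m)$-Hopf fibrations of $S^3$ and the careful identification of the resulting links with the named families $\mathrm{L}_*^*(n,m),\mathrm{L}_*(n,m),\mathrm{L}^*(n,m),\mathrm{L}(n,m),K(k)$; this is bookkeeping, but care is required to avoid double counting (for instance, $\mathrm{L}(1,1)$ and $K(1)$ must be reconciled) and to verify that these five families are mutually exhaustive.
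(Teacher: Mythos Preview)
The paper does not prove this theorem: it is quoted verbatim as \cite[Theorem 1]{SeifertLinks} (Burde--Murasugi) and used as a black box, so there is no in-paper argument to compare against. Your outline is in the spirit of the original Burde--Murasugi proof and of the later, cleaner route via the Casson--Jungreis/Gabai and Waldhausen theorems, so as a strategy it is sound.

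Two points nonetheless deserve correction. First, your split-link reduction is slightly off: if $L=L_1\sqcup L_2$ is a non-trivial splitting, then both free factors $\pi_1(S^3\setminus L_i)$ are non-trivial (each is generated by meridians of a non-empty link), so the free product always has trivial center. There is no ``unless $L_2$ is an unknot'' escape clause; the correct conclusion is simply that $L$ is non-split, full stop. Second, the sentence ``any Seifert fibered submanifold of $S^3$ arising in this way extends to a Seifert fibration of $S^3$ itself'' is precisely the heart of Burde--Murasugi's argument and is not automatic: one must analyse how the fibration meets each torus boundary component, show that the fiber slope on each component is that of a cable, and then fill in accordingly. You are right to flag this as the main obstacle, but it is a genuine piece of work rather than an invocation.
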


The links appearing in Theorem \ref{BurdeMurasugiIntro} are called \textit{Seifert links}, refering to the fact that they are precisely the links whose complement in $S^3$ is a Seifert fiber space. Not only these links are natural to consider as the only links whose link groups have non-trivial centers, they also are central objects in the study of the so-called $\mathrm{JSJ}$-decompositions of link complements. 
Recall that an $n$-manifold is \textit{prime} if it cannot be expressed as a non-trivial connected sum of $n$-manifolds. 
The $\mathrm{JSJ}$-decomposition theorem is a result proven in \cite{JS} and \cite{J} stating that any prime $3$-manifold $M$ contains a unique (up to isopoty) mimimal family $\{T_i\}_{i\in[k]}$ of embedded incompressible tori such that for all $i\in [k]$, the manifold $M\backslash T_i$ is a disjoint union of atoroidal manifolds (ones that do not contain any incompressible torus) and Seifert-fibered manifolds. When specialising to link complements in $S^3$, the Seifert links are those whose complements in $S^3$ are the Seifert-fibered manifolds arising in the $\mathrm{JSJ}$-decompositions of link complements. See \cite{Budney} for a detailed exposition of the $\mathrm{JSJ}$-decomposition in the case of link complements.\\ 

Thus, the family of $J$-braid groups associated to $J$-reflection groups with one conjugacy class of reflecting hyperplanes (that is, the toric reflection groups) coincides with a family of Seifert links, namely the torus knots. Using Theorem \ref{LinkGroupIntro} for computing group presentations of torus necklace groups on the one hand and comparing to the group presentations obtained by the author in \cite{BraidsJIgor} on the other hand, we extend the above result to all $J$-braid groups :
\begin{theorem}[Theorem \ref{NecklaceGroupsAreJBraidgroups}]\label{TheoremNecklaceBraidsIntro}
Let $W$ be a $J$-reflection group. There exists a torus necklace $L$ such that $\B_J(W)\cong \pi_1(\R^3\backslash L)$. 
Moreover, under this identification, the meridians of the torus necklace group correspond to the braid reflections of the $J$-braid group.
\end{theorem}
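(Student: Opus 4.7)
The plan is to exhibit, for each $J$-reflection group $W$, an explicit torus necklace $L_W$ together with an explicit braid $\beta_W$ whose closure is $L_W$, and then to use Artin's theorem (Theorem \ref{LinkGroupIntro}) to produce a presentation of $\pi_1(\R^3\setminus L_W)$ that matches, via Tietze transformations, the $J$-braid presentation of $\B_J(W)$ obtained in \cite{BraidsJIgor}. Once such an isomorphism between presentations is in hand, the identification of generators will automatically send the meridians of $L_W$ to the braid reflections of $\B_J(W)$, so the second assertion of the theorem comes for free.

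First I would organise the parametrisation. A $J$-reflection group $W$ is specified by integers $(n,m)$ encoding its \emph{toric part} together with data (numbers and orders) for its conjugacy classes of reflecting hyperplanes. A torus necklace is likewise specified by $(n,m)$ together with the optional choice of an inner-core and/or outer-core circle, yielding exactly the four families $\mathrm L(n,m)$, $\mathrm L_*(n,m)$, $\mathrm L^*(n,m)$, $\mathrm L_*^*(n,m)$. I would set up a dictionary between the two sets of data: the toric link carries the classes of reflecting hyperplanes already present for the toric reflection groups of \cite{Gobet Toric}, while the inner-core and outer-core circles account for the extra conjugacy classes of reflecting hyperplanes appearing in general $J$-reflection groups. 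The case of one conjugacy class, in which $L_W$ is a torus knot, recovers Gobet's correspondence.

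Second, I would realise each $L_W$ as an explicit braid closure. The torus link $\mathrm L(n,m)$ is the closure of $(\sigma_1\cdots\sigma_{n-1})^m\in B_n$; adding a circle around the inner core amounts to passing to $B_{n+1}$ with an extra strand that plays the role of the braid axis, and adding a circle around the outer core amounts to an extra strand that winds once around all the torus strands. Applying Theorem \ref{LinkGroupIntro} to these explicit braids yields a presentation of $\pi_1(\R^3\setminus L_W)$ on meridians $x_1,\dots,x_{n+\varepsilon}$ subject to the Artin relations $x_i=x_i\cdot\beta_W$ for $i\in[n+\varepsilon]$.

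The final and most delicate step is to show that the resulting Artin presentation is Tietze-equivalent to the braid presentation of $\B_J(W)$ constructed in \cite{BraidsJIgor}. The Artin system is highly redundant: standard manipulations reduce it to a rotation-type relation modelling $(\sigma_1\cdots\sigma_{n-1})^m$ together with commutation and conjugation relations describing how the core meridians interact with the torus meridians. This short list is essentially the braid presentation of $\B_J(W)$, in which each conjugacy class of reflecting hyperplanes contributes a distinguished generator and the toric part contributes a single long relation. The main obstacle is the bookkeeping in the mixed case $\mathrm L_*^*(n,m)$: the Artin action mixes the outer-core meridian with the torus-strand meridians in a non-trivial way, and verifying that the resulting relations reduce exactly to those of the $J$-braid group \textemdash\ rather than to a quotient or an extension \textemdash\ requires a careful orbit analysis under the cyclic action induced by $(\sigma_1\cdots\sigma_{n-1})^m$ on the strands. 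Once the two presentations are matched, the meridian/braid-reflection correspondence follows from the generator-by-generator identification.
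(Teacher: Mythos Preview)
Your proposal is correct and follows essentially the same strategy as the paper: realise the torus necklace as an explicit braid closure, compute the Artin presentation via Theorem~\ref{LinkGroupIntro}, and reduce it by Tietze moves to the $J$-braid presentation~\eqref{BraidPres}, with the meridian/braid-reflection correspondence coming from the generator-by-generator match. The paper streamlines the bookkeeping you flag as delicate by treating only the full case $\mathrm L_*^*(n,m)$ (the other three then follow by killing the relevant core meridians via Proposition~\ref{LinkQuotient} and the square~\eqref{CommFINDANAMEbraids}) and by first applying the change-of-basis automorphism $\rho_{\Delta_n}$ (Lemma~\ref{ArtinDelta}, Remark~\ref{AutoFree}) to the raw Artin relations before simplifying.
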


More generally, extending the definition of $J$-reflection groups to what we call generalised $J$-reflection groups, the same proof as that of Theorem \ref{TheoremNecklaceBraidsIntro} shows that we have a complete correspondence between the family of torus necklace groups and that of generalised $J$-braid groups, where meridians correspond to braid reflections (see Section \ref{SectionGeneralised}).\\

When specialising to finite $J$-reflection groups (equivalently, to irreducible complex reflection groups of rank two), combining the works of Bannai in \cite{Bannai} and Budney in \cite{Budney} allows to give a topological explanation to Theorem \ref{TheoremNecklaceBraidsIntro}:

\begin{remark}[{Corollary \ref{CorollaryHomotopyEquivalence}}]
Let $W$ be an irreducible complex reflection group of rank two and let $\mathrm L$ be the torus necklace corresponding to $\B_\C(W)$ in Table \ref{TableBraids}. There exists a homotopy equivalence between the regular orbit space of $W$ and $S^3\backslash \mathrm L$ which induces a bijection between (homotopy classes of) braid reflections and meridians. 
\end{remark}

Finally, given $n,m\in\N^*$, define $G(n,m)$ to be the group defined by the presentation $\langle a_1,\dots,a_n\,|\, a_1\cdots a_m=a_i\cdots a_{i+m-1},\, \forall \, i\in [n]\rangle$, where indices are taken modulo $n$. Such a group is called a \textit{circular group}. On the one hand, it is well known that complex braid groups of rank two are isomorphic to circular groups (see \cite[Theorems 1 and 2]{Bannai}). On the other hand, it is well known that torus knot groups are isomorphic to circular groups (see \cite{Gobet Garside} for a proof) and Bessis stated without proof in \cite{DualBessis} that all torus link groups are isomorphic to circular groups, with the specific case of $n$ dividing $m$ having been proved by Picantin in \cite{PicantinTorus}. In addition, it is known (see \cite{SeifertLinks}) that for any $n,m\in\N^*$, the torus necklace group $\pi_1(\R^3\backslash \mathrm L_*^*(n,m))$ is isomorphic to $G(n\wedge m+1,n\wedge m+1)$ and that the key-chain link group $\pi_1(\R^3\backslash K(k))$ is isomorphic to $G(k+1,k+1)$. Generalising some isomorphisms that where exhibited by the author in \cite{BraidsJIgor}, we generalise all these observations to obtain the following results:
\begin{theorem}[Theorems \ref{NecklaceGroupsAreJBraidgroups} and \ref{LnIso} and Remark \ref{CircularIso}]
Up to abstract isomorphisms, the families of torus necklace groups and circular groups are the same.
\end{theorem}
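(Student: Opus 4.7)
The plan is to establish both inclusions between the families of (abstract isomorphism classes of) torus necklace groups and circular groups, by routing through the intermediate family of generalised $J$-braid groups.

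For the inclusion of torus necklace groups into circular groups, I would start by invoking Theorem~\ref{TheoremNecklaceBraidsIntro} (extended to generalised $J$-reflection groups as in Section~\ref{SectionGeneralised}), which identifies each torus necklace group with the $J$-braid group of a (generalised) $J$-reflection group $W$. Working from the standard presentation of $\B_J(W)$ coming from Presentation~\eqref{BraidPres}, I would then perform Tietze transformations to reach a symmetric form $\langle a_1,\dots,a_N\mid a_1\cdots a_M=a_i\cdots a_{i+M-1},\ \forall i\in[N]\rangle$. For the pure torus link piece $\mathrm L(n,m)$, this essentially recovers Bannai's argument from \cite{Bannai}, extended beyond the finite case in the spirit of \cite{DualBessis, PicantinTorus}; for the variants $\mathrm L_*(n,m)$, $\mathrm L^*(n,m)$ and $\mathrm L_*^*(n,m)$, the extra meridians coming from the internal and external cores contribute additional generators which, after carefully chosen Tietze moves, can be absorbed into a circular presentation. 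The identity $\pi_1(\R^3\backslash\mathrm L_*^*(n,m))\cong G(n\wedge m+1,n\wedge m+1)$ recorded in the introduction serves as the model computation and suggests the general pattern.

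For the reverse inclusion, given $G(n,m)$ I would produce an explicit torus necklace realising it as its link group. When $n\wedge m=1$ with $n,m\geq 2$, the torus knot $\mathrm L(n,m)$ does the job via Bannai's presentation; when $n\wedge m\geq 2$, the relation between $n$ and $m$ dictates which of $\mathrm L(n,m)$, $\mathrm L_*(n,m)$, $\mathrm L^*(n,m)$ or $\mathrm L_*^*(n,m)$ works. The diagonal family $G(k+1,k+1)$ is already covered by $\mathrm L_*^*(n,m)$ with $n\wedge m=k$, consistent with the key-chain computation $\pi_1(\R^3\backslash K(k))\cong G(k+1,k+1)$ from \cite{SeifertLinks}; the degenerate rows such as $G(n,1)$ are dealt with separately via unknotted components.

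The main obstacle will be the bookkeeping in the case analysis: verifying that every pair $(n,m)\in\N^*\times\N^*$ is matched to a torus necklace with the right parameters, and conversely that the presentations read off from Artin's theorem (Theorem~\ref{LinkGroupIntro}) applied to braid closures of torus necklaces can always be brought, by Tietze moves, into the cyclically symmetric circular form. I expect the manipulations themselves to be essentially mechanical; the only genuine subtlety I anticipate is tracking how the meridians of the central cores of the solid torus interact with the torus link meridians in the braid words whose closures are these necklaces, and ensuring this interaction is consistent with the circular relators.
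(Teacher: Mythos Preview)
Your outline has the right shape, but you underestimate the forward direction and overcomplicate the reverse.

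For the reverse inclusion, you do not need the case split you describe: since $\B(n,m)$ is by definition the circular group $G(n,m)$ and Theorem~\ref{NecklaceGroupsAreJBraidgroups} (in its generalised form) gives $\mathcal L(n,m)\cong\B(n,m)$ for all $n,m\in\N^*$, every circular group is already a torus link group. No separate treatment of the diagonal family or of $G(n,1)$ is needed.

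The forward direction is where the real work lies, and ``essentially mechanical Tietze moves'' does not capture what is required for $\mathcal L_*(n,m)$ and $\mathcal L_*^*(n,m)$. The paper proceeds in two genuinely non-obvious steps. First, one must identify a specific central element $\Delta=w^{n'-r'}W^{r'}$ in $\mathcal L_*^*(n,m)$ and prove the identity $\Delta=\delta^{m'}z^{n'}$ (Lemma~\ref{DeltaEqProduct}, Proposition~\ref{DeltaCentral}); this is the key structural fact, and its proof is a delicate combinatorial argument, not a routine manipulation. Only then can one write down explicit mutually inverse maps $\mathcal L_*^*(n,m)\leftrightarrow G(d+2,d+2)$ whose formulas involve the modular inverses $n'_{(m')}$ and $m'_{(n')}$ (Propositions~\ref{LtoG(d+2,d+2)}--\ref{LIsoG(d+2,d+2)}). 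Second, for $\mathcal L_*(n,m)$ one does not work directly from Presentation~\eqref{BraidPres} but rather quotients the $G(d+2,d+2)$ presentation by the image of $z$, obtaining Presentation~\eqref{PresQuotientL1}, and then builds a further pair of explicit inverse maps to $G(d+1,(d+1)m')$ (Propositions~\ref{Iso2Conj1}--\ref{Iso2Conj2}), again using modular inverses and an auxiliary B\'ezout identity. Neither of these isomorphisms is something one would discover by pushing symbols around; the target circular group (and especially the parameter $(d+1)m'$) has to be guessed in advance.
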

 Since circular groups are examples of Garside groups, we obtain the following corollary:

\begin{corollary}[Corollary \ref{GarsideLinkGroups}]\label{CorollaryGarside}
A group is isomorphic to a link group with non-trivial center if and only if it is isomorphic to a circular group. In particular, a link group is Garside if and only if it has a non-trivial center.
\end{corollary}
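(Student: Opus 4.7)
The plan is to reduce everything to the preceding theorem identifying torus necklace groups with circular groups, and to leverage the Burde--Murasugi classification (Theorem \ref{BurdeMurasugiIntro}) together with standard facts about Garside groups.

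First I would prove the forward direction of the first equivalence. Suppose $G$ is isomorphic to a link group with non-trivial center. By Theorem \ref{BurdeMurasugiIntro}, the underlying link must be isotopic either to a torus necklace $\mathrm L_*^*(n,m)$, $\mathrm L_*(n,m)$, $\mathrm L^*(n,m)$ or $\mathrm L(n,m)$, or to a key-chain link $K(k)$. In the torus necklace case, the immediately preceding theorem says that $G$ is isomorphic to some circular group $G(p,q)$. In the key-chain case, the excerpt recalls from \cite{SeifertLinks} that $\pi_1(\R^3\backslash K(k))\cong G(k+1,k+1)$, which is again a circular group. So in either situation $G$ is a circular group.

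For the converse, let $G\cong G(p,q)$ be a circular group. The preceding theorem asserts that the families of torus necklace groups and circular groups coincide, so $G$ is isomorphic to the link group of some torus necklace. In particular it is the link group of a Seifert link, and by Theorem \ref{BurdeMurasugiIntro} such a link group has non-trivial center. This proves the first equivalence.

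For the second equivalence, I would use two standard facts: (i) every circular group is a Garside group (this is explicitly recalled in the paragraph preceding the corollary), and (ii) every Garside group has infinite cyclic center generated by a power of the Garside element, in particular non-trivial center. Combining (i) with the first equivalence gives that a link group with non-trivial center is Garside; combining (ii) with the contrapositive shows that if a link group is Garside then its center is non-trivial. I expect no real obstacle here: the entire argument is a routine juxtaposition of Theorem \ref{BurdeMurasugiIntro}, the preceding theorem on circular groups, the isomorphism for key-chain links, and the general properties of Garside groups; the substantive content has already been absorbed into those inputs.
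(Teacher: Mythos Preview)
Your proposal is correct and follows essentially the same approach as the paper: both combine the Burde--Murasugi classification with the identifications of torus necklace and key-chain link groups as circular groups, and then use that circular groups are Garside while Garside groups have non-trivial center. One small inaccuracy: in (ii) you overclaim that every Garside group has \emph{infinite cyclic} center; this is not true in general (think of direct products), but the only fact you actually need---that some power of the Garside element is central, hence the center is non-trivial---is what the paper invokes (citing \cite[Proposition 2.6]{Origines}).
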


Note that every link group is bi-automatic (see \cite{LinksBiAutomatic}), and Corollary \ref{CorollaryGarside} provides an explicit bi-automatic structure for all Seifert link groups since Garside groups are bi-automatic (see \cite[Proposition 3.12]{DehornoyGarside}).

\addtocontents{toc}{\SkipTocEntry}
\section*{Acknowledgments}
This work is part of my PhD thesis, done under the supervision of Thomas Gobet and Cédric Lecouvey at Université de Tours, Institut Denis Poisson. 
I thank Thomas Gobet for his careful and numerous readings of all redaction stages of this article. I also thank him for all his precious advice, suggestions and his answers to my questions. I also thank Cédric Lecouvey for his careful reading at numerous stages of this article and his advice. Moreover, I thank Fathi Ben Aribi for many suggestions and informative discussions. Specifically, I thank him for pointing out \cite{AribiThesis} and \cite{Budney}. I also thank Michael Heusener for beneficial discussions. Finally, I thank Eléonore Meloni for her help in creating the figures of this article. This work was partially supported by the ANR
project CORTIPOM (ANR-21-CE40-0019).

\section{\texorpdfstring{Definitions and results about $J$-braid groups}{}}
\subsection{\texorpdfstring{$J$-reflection groups}{}}
The family of $J$-reflection groups was introduced by the author in \cite{VCRG}, as a subfamily of the family of $J$-groups introduced by Achar \& Aubert in \cite{AA}. The family of $J$-reflection group generalises another family of $J$-groups studied by Gobet in \cite{Gobet Toric} which he called toric reflection groups. In \cite{BraidsJIgor}, the author then defined the braid groups of $J$-reflection groups. The families of $J$-reflection groups and their braid groups generalise irreducible rank two complex reflection groups and their braid groups.\\

We start by defining $J$-groups and $J$-reflection groups.

\begin{definition}[{\cite[Introduction]{AA}}]\label{DefJGroups}
 For $k,n,m\in \N_{\geq 2}$, define $J\begin{pmatrix} k & n & m \\ & & \end{pmatrix}$ to be the group with presentation
 $$\langle s,t,u\, | \, s^k=t^n=u^m=1, \, stu=tus=ust\rangle.$$
More generally, for pairwise coprime elements $k',n',m'\in \N^*$ such that $x'$ divides $x$ for all $x\in \{k,n,m\}$, the group $J\begin{pmatrix} k & n & m \\ k'&n' &m' \end{pmatrix}$ is defined as the normal closure of $\{s^{k'},t^{n'},u^{m'}\}$ in $J\begin{pmatrix} k& n & m \\ & & \end{pmatrix}$. Finally, whenever $k',n'$ or $m'$ is equal to $1$, we omit it in the notation.
\end{definition}

\begin{definition}[{\cite[Definition 2.13]{VCRG}}]\label{DefJReflectionGroups}
Let $k,b,n,c,m\in\N^*$ with $k,bn,cm\geq 2$ and $n\wedge m=1$. The group $W_b^c(k,bn,cm)$ is defined as $J\begin{pmatrix} k & bn & cm \\ & n & m\end{pmatrix}$. Such a group is called a \textbf{$J$-reflection group}.
\end{definition}

Achar \& Aubert classified finite $J$-groups. The result, when specialised to $J$-reflection groups, takes the following form:
\begin{theorem}[{\cite[Theorem 1.2]{AA}}]
A group is a finite $J$-reflection group if and only if it is an irreducible complex reflection group of rank two.
\end{theorem}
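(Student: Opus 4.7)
The plan is to deduce the statement from the general classification of finite $J$-groups given in Achar--Aubert \cite[Theorem 1.2]{AA}, by restricting to the sub-template of parameters that defines $J$-reflection groups.

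First, I would recall that \cite[Theorem 1.2]{AA} enumerates all six-tuples of parameters $(k,n,m,k',n',m')$ for which the group $J\begin{pmatrix} k & n & m \\ k'&n' &m' \end{pmatrix}$ is finite, and identifies each resulting finite group with a specific entry of the Shephard--Todd list. I would then single out those tuples fitting the $J$-reflection group template of Definition \ref{DefJReflectionGroups}, namely $k'=1$ together with $n = bn'$, $m = cm'$ and $n'\wedge m'=1$. By construction, the $J$-groups corresponding to such tuples are precisely the finite $J$-reflection groups, and a direct inspection of the Achar--Aubert list shows that the reflection groups attached to them are exactly the irreducible complex reflection groups of rank two.

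For the converse inclusion I would run through the Shephard--Todd classification of irreducible rank two complex reflection groups, that is, the infinite series $G(de,e,2)$ together with the exceptional groups $G_4$--$G_{22}$. For each such $W$, I would take the BMR presentation from \cite[Tables 1--4]{BMR} and produce, possibly after a Tietze transformation, integers $(k,b,n,c,m)$ with $n\wedge m=1$ for which that presentation becomes the presentation of $W_b^c(k,bn,cm)$ computed in \cite{VCRG}. This realizes $W$ as a finite $J$-reflection group, closing the equivalence.

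The main technical obstacle is the bookkeeping on the exceptional side: several rank two exceptional groups have BMR presentations that are not literally in the $stu=tus=ust$ shape of Definition \ref{DefJGroups}, so one must identify the correct generating triple via a Tietze transformation and then verify that the coprimality condition $n\wedge m=1$ genuinely holds for the resulting parameters. Once this case-by-case matching is carried out for each rank two Shephard--Todd type, the theorem follows by combining the Achar--Aubert classification with the identification of presentations.
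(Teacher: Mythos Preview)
Your proposal is correct and matches the paper's treatment: the paper does not give its own proof of this statement but simply records it as a specialisation of \cite[Theorem 1.2]{AA} to the $J$-reflection group template, exactly as you outline. The paper additionally notes that a direct proof of this specialisation (bypassing the full Achar--Aubert classification) appears in \cite{VCRG}, Remark 2.44, via the explicit presentation \eqref{GeneralPresW} and \cite[Table 1]{VCRG}, so your case-by-case matching of BMR presentations for the converse direction is in fact already carried out there.
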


A direct proof of the above result specialised to $J$-reflection groups was given in \cite{VCRG} (see Remark 2.44).

\begin{theorem}[{\cite[Theorem 2.29]{VCRG}}]\label{GeneralPres}
Let $k,b,n,c,m\in\N^*$ with $k,bn,cm\geq 2$ and $n\wedge m=1$. The group $W_b^c(k,bn,cm)$ admits the following presentation:
\begin{equation}\label{GeneralPresW}
\begin{aligned}
&(1) \,\, \mathrm{Generators}\!:\,  \{x_1,\dots,x_n,y,z\};\\
&(2) \,\, \mathrm{Relations}\!: \,\\
&x_i^k=y^b=z^c=1\, \forall i=1,\cdots,n, \\
  &  x_1\cdots x_nyz=zx_1\cdots x_ny,\\
       & x_{i+1}\cdots x_nyz\delta^{q-1}x_1\cdots x_{i+r}=x_i\cdots x_nyz\delta^{q-1}x_1\cdots x_{i+r-1}, \, \forall 1\leq i \leq n-r,\\
      &  x_{i+1}\cdots x_nyz\delta^qx_1\cdots x_{i+r-n}=x_i\cdots x_nyz\delta^qx_1\cdots x_{i+r-n-1},\, \forall n-r+1\leq i \leq n.
\end{aligned}
\end{equation}
where $\delta$ denotes $x_1\cdots x_ny$.
\end{theorem}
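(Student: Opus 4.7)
The plan is to prove Theorem~\ref{GeneralPres} by applying the Reidemeister--Schreier method to the subgroup $N := W_b^c(k,bn,cm)$ inside the ambient group $G := J\begin{pmatrix} k & bn & cm \end{pmatrix}$, with standard presentation $\langle s, t, u \mid s^k = t^{bn} = u^{cm} = 1,\ stu = tus = ust \rangle$. The first step is to identify the quotient $G/N$: since $N$ is the normal closure of $\{s, t^n, u^m\}$, in $G/N$ we have $\bar s = 1$, $\bar t^n = 1$, and $\bar u^m = 1$, and the braid relation $stu = ust$ then collapses to $\bar t \bar u = \bar u \bar t$. Hence $G/N \cong \Z/n \times \Z/m$, so $N$ has finite index $nm$ in $G$ and admits the Schreier transversal $T := \{t^i u^j : 0 \le i < n,\ 0 \le j < m\}$.

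Next I would choose distinguished generators of $N$. Set $y := t^n$ and $z := u^m$, both of which lie in $N$. For the $x_i$, the Reidemeister--Schreier generators $\tau \cdot s \cdot \tau^{-1}$ (for $\tau \in T$) naturally organize into orbits indexed modulo $n$ under the cyclic action induced by conjugation by $u$ and $t$, rewritten using the braid relations $stu = tus = ust$; I would let $x_i$ for $i = 1, \dots, n$ be a distinguished representative of the $i$-th orbit, chosen so that $x_1 \cdots x_n y$ equals the central element $\omega := stu$ of $G$. Centrality of $\omega$ (a standard consequence of $stu = tus = ust$, since $tu$ commutes with $s$, and similarly for the other pairs) then immediately gives the commutation relation $(x_1 \cdots x_n y) z = z (x_1 \cdots x_n y)$.

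With this setup I would verify each remaining relation of Presentation~\eqref{GeneralPresW} directly in $G$. The torsion relations $x_i^k = 1$ are conjugates of $s^k = 1$, while $y^b = t^{bn} = 1$ and $z^c = u^{cm} = 1$ are immediate. The two families of conjugation relations, which express that specific words in the $x_j, y, z$ conjugate $x_{i+r}$ (respectively $x_{i+r-n}$) to $x_i$, arise by tracking the effect of multiplying by $z = u^m$ on the cyclic ordering of the $x_j$'s: writing $m = qn + r$ with $0 \le r < n$, the parameter $q$ records the number of full wrap-arounds of the cyclic index set $\Z/n$ and $r$ the remainder, which is exactly what dichotomizes the displayed relations into the two families according to whether $i + r \le n$ or $i + r > n$.

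The crux of the proof, and the main obstacle, is showing that the listed relations suffice. Here I would complete the Reidemeister--Schreier reduction: the raw Reidemeister--Schreier presentation yields $3nm$ generators (one per pair (coset, original generator)) and $5nm$ relations (one per pair (coset, original relation)), which must be reduced via Tietze transformations. The braid relations, applied repeatedly, allow one to merge the $\tau \cdot s \cdot \tau^{-1}$ type generators into the $x_i$'s (modulo powers of the central $\omega$) and to identify all $\tau \cdot t \cdot \tau^{-1}$ and $\tau \cdot u \cdot \tau^{-1}$ with powers of $y$, $z$, or with expressions already in terms of the $x_i$'s. The delicate bookkeeping of this reduction---ensuring that every Reidemeister--Schreier relation either reduces to a listed relation or becomes a consequence of the listed ones, and checking that no listed relation is redundant---is the main technical obstacle.
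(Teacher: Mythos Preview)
This theorem is not proved in the present paper; it is quoted from the companion article \cite{VCRG} (Theorem~2.29 there), so there is no proof here to compare against directly. That said, your Reidemeister--Schreier strategy is the natural approach and is almost certainly close in spirit to what \cite{VCRG} does: the title of that paper (``via generators and relations'') and the remark in the present paper that ``the proof of Theorem~\ref{GeneralPres} does not use the fact that $n$ and $m$ are coprime'' both point to an explicit rewriting argument of exactly the kind you describe.

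Your outline is sound. The identification $G/N \cong \Z/n \times \Z/m$ is correct (the braid relations collapse to commutativity once $\bar s = 1$, and the resulting presentation is precisely that of $\Z/n \times \Z/m$), the transversal $\{t^i u^j\}$ is a valid Schreier transversal, and the observation that $\omega = stu$ is central in $G$ is the key structural fact driving the simplification. Your heuristic for why the two families of relations split according to whether $i+r \le n$ or $i+r > n$ is also correct.

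The one place where your sketch is genuinely incomplete is the reduction step, which you yourself flag: going from the raw Reidemeister--Schreier output (on the order of $3nm$ generators and $5nm$ relators) down to the clean $n+2$ generators and the displayed relations requires a careful choice of which Schreier generators to eliminate first and in what order, and a verification that the surviving relations are exactly those listed (no more, no fewer). This is not a conceptual gap but a bookkeeping one; however, it is where all the actual work lies, and a referee would not accept ``delicate bookkeeping'' as a proof. If you intend to reconstruct the argument independently of \cite{VCRG}, you should expect to spell out explicitly how each Schreier generator $t^i u^j \cdot g \cdot \overline{t^i u^j g}^{\,-1}$ (for $g \in \{s,t,u\}$) rewrites in terms of $x_1,\dots,x_n,y,z$, and then track each of the $5nm$ lifted relations through those substitutions.
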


\begin{theorem}[{\cite[Theorem 2.52]{VCRG}}]\label{CenterFINDANAME}
The center of $W_b^c(k,bn,cm)$ is cyclic, generated by $(x_1\cdots x_ny)^mz^n$.
\end{theorem}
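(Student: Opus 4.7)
The strategy is to proceed in two stages: first check that $\zeta := (x_1\cdots x_n y)^m z^n = \delta^m z^n$ commutes with every generator of $W := W_b^c(k,bn,cm)$, and then show that $\zeta$ in fact generates the entire center.

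For centrality, the commutation with $z$ is immediate: the relation $x_1\cdots x_n y z = z\, x_1\cdots x_n y$ says exactly that $\delta z = z\delta$, whence $\zeta$ commutes with $z$ and with $\delta$, and hence with any power of either. For commutation with the $x_i$ (commutation with $y$ then follows from $y = x_n^{-1}\cdots x_1^{-1}\delta$), I would read the last two families of relations of Presentation \eqref{GeneralPresW} as conjugation rules: comparing the identity for index $i$ to that for index $i+1$ should extract a relation of the form $\delta\, x_i\,\delta^{-1} = x_{i+r \bmod n}$, possibly twisted by a factor of $z$ at each index wrap-around, which accounts exactly for the split into the ranges $i \le n-r$ and $i > n-r$ and for the jump in the exponent of $\delta$ from $q-1$ to $q$. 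Iterating this rule $m$ times and collecting the accumulated $z$-factors into $z^{\pm n}$ should produce $\delta^m z^n \cdot x_i \cdot z^{-n}\delta^{-m} = x_i$. A cleaner alternative is to work inside the ambient $J$-group, with generators $s, t, u$ satisfying $stu = tus = ust$: a short check shows that $s$ commutes with $tu$, $t$ with $us$, and $u$ with $st$, so that $\zeta_0 := stu$ is central. Writing $x_i = t^{i-1} s t^{1-i}$, $y = t^n$, $z = u^m$, a telescoping computation yields $\delta = (st)^n$, and since $st$ commutes with $u$ one obtains $\zeta = (st)^{nm} u^{nm} = \zeta_0^{nm}$, visibly central in the ambient group and hence in $W$.

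For the second stage, the plan is to show $Z(W) \subseteq \langle \zeta \rangle$ by analyzing the quotient $\overline W := W / \langle\zeta\rangle$ and proving $Z(\overline W) = 1$. Imposing $\delta^m z^n = 1$ forces $\delta^m = z^{-n}$; combined with the torsion relations $x_i^k = y^b = z^c = 1$ and the $\delta$-shift rule obtained above, this should collapse Presentation \eqref{GeneralPresW} to a $(k, bn, cm)$-triangle-type group, or an explicit small quotient thereof, which in the hyperbolic parameter regime has trivial center by standard facts. The finite and affine regimes correspond either to $W$ being a finite irreducible rank-two complex reflection group, where the center is classical and can be matched against $\langle\zeta\rangle$ by direct inspection, or to a short list of degenerate cases to be handled by hand. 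Separately, one must verify that no proper power of $\zeta_0$ already lies in $W$, so that $\zeta = \zeta_0^{nm}$ is a genuine generator of $Z(W)$ rather than itself a non-trivial power.

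The principal technical obstacle is the combinatorial bookkeeping in stage one when carried out directly inside $W$: Presentation \eqref{GeneralPresW} is asymmetric, and extracting the clean conjugation rule $\delta\, x_i\,\delta^{-1} = x_{i+r}$ together with its $z$-correction demands a careful induction on $i$. The ambient-$J$-group shortcut bypasses this issue but relies on the preliminary verification that $\zeta_0 = stu$ is central together with the telescoping identity $x_1\cdots x_n y = (st)^n$; both are short but essential. Once stage one is in place, the second stage should be largely a matter of identifying $\overline W$ with a known group of trivial center.
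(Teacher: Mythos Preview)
The paper does not prove this theorem at all: it is quoted verbatim as \cite[Theorem 2.52]{VCRG} and used as a black box. So there is no ``paper's own proof'' to compare against here. That said, the paper does contain a closely related argument, namely the proof of Proposition~\ref{DeltaCentral} that $\Delta=\delta^{m'}z^{n'}$ is central in $\mathcal L_*^*(n,m)$; since the natural quotient $\B_J(W)\to W$ sends $\Delta$ to your $\zeta$ (recall $n\wedge m=1$ here, so $m'=m$, $n'=n$), that proposition already yields centrality of $\zeta$ in $W$. The paper's method for that is quite different from yours: it introduces auxiliary elements $w$ and $W$ (Definition~\ref{DefwWDelta}), proves conjugation rules $x_iw=wx_{i+r}$ and $x_{n-r+k}W=Wx_k$ directly from the presentation (Lemmas~\ref{xvw} and~\ref{xvW}), and then shows via a combinatorial word argument (Lemma~\ref{DeltaEqProduct}) that $w^{n'-r'}W^{r'}=\delta^{m'}z^{n'}$.

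Your ambient-$J$-group shortcut is cleaner for centrality and is correct: the identifications $x_i=t^{i-1}st^{1-i}$, $y=t^n$, $z=u^m$ are exactly those underlying the proof of Presentation~\eqref{GeneralPresW} in \cite{VCRG}, the telescoping $\delta=(st)^n$ is right, and $\zeta=(stu)^{nm}$ follows. This buys you a two-line proof of centrality at the cost of importing the generator dictionary from \cite{VCRG}; the paper's approach stays entirely inside the presentation but is heavier.

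For the second stage your outline is plausible but genuinely incomplete. Identifying $W/\langle\zeta\rangle$ with a triangle-type group and invoking ``standard facts'' about trivial center in the hyperbolic regime is the right shape, but the passage from Presentation~\eqref{GeneralPresW} with the extra relation $\delta^m z^n=1$ to an honest triangle group presentation is not automatic, and the Euclidean/spherical boundary cases need explicit treatment. Your closing worry about whether a proper power of $\zeta_0=stu$ already lies in $W$ is also a real issue: it governs whether $\langle\zeta\rangle$ is all of $Z(W)\cap\langle\zeta_0\rangle$, and settling it requires knowing the index of $W$ in the ambient $J$-group (which is $nm$, but that too needs proof). None of this is insurmountable, but as written the second stage is a plan rather than a proof.
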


\begin{notation}
If $b$ or $c$ are equal to $1$, we omit them in the notation of $J$-reflection groups. More precisely, we denote the group $W_b^1(k,bn,m)$ by $W_b(k,bn,m)$, the group $W_1^c(k,n,cm)$ by $W^c(k,n,cm)$ and the group $W_1^1(k,n,m)$ by $W(k,n,m)$.
\end{notation}

\begin{remark}
(i) Toric reflection groups as defined by Gobet in \cite{Gobet Toric} are $J$-reflection groups, corresponding to the case $b=c=1$.\\
(ii) Every irreducible rank two complex reflection group is a $J$-reflection group (see \cite[Table 1]{VCRG}).
\end{remark}

It turns out that whenever $W_b^c(k,bn,cm)$ is finite, when seen as an irreducible complex reflection group of rank two, its complex reflections can be described as the conjugates of non-trivial powers of elements in the set $\{x_1,\dots,x_n,y,z\}$. More precisely, when $W_b^c(k,bn,cm)$ is finite, Presentation \eqref{GeneralPresW} coincides with its BMR presentation. This suggests to look at $J$-reflection groups as (abstract) reflection groups, as defined by Gobet:

\begin{definition}[{\cite[Definition 2.2]{Gobet Toric} and \cite[Lemma 2.24]{VCRG}}]\label{reflections}
The set of reflections of $W_b^c(k,bn,cm)$ is the set of conjugates of non-trivial powers of elements in the set $\{x_1,\dots,x_n,y,z\}$. We denote this set by $R(W_b^c(k,bn,cm))$.
\end{definition}

Definition \ref{reflections} suggests to study $J$-reflection groups not only as abstract groups but as groups with a distinguished set of elements generalising the set of reflections of complex reflection groups. More precisely, we have the following definition:

\begin{definition}[{\cite[Definition 2.3]{Gobet Toric}}]
Two $J$-reflection groups $H_1$ and $H_2$ are said to be \textbf{isomorphic in reflection} if there exists a group isomorphism $H_1\xto\varphi H_2$ such that $\varphi(R(H_1))=R(H_2)$.
\end{definition}

\begin{remark}[{\cite[Remark 2.4]{Gobet Toric}}]\label{isopermute}
The reflection isomorphism type of the group $J\begin{pmatrix} k & n & m\\ k'& n' &m' \end{pmatrix}$ is invariant under column permutations.
\end{remark}

In \cite[Theorem 1.2]{Gobet Toric}, Gobet classified toric reflection groups up to reflection isomorphisms, result which was then generalised by the author:

\begin{theorem}[{\cite[Theorem 3.11]{VCRG}}]
Let $k,k',b,b',n',n',c,c',m,m'\in \N^*$ and assume that 
$k,k',bn,b'n',cm,c'm'\geq 2$ and $n\wedge m=n'\wedge m'=1$. The $J$-reflection groups $W_b^c(k,bn,cm)$ and $W_{b'}^{c'}(k',b'n',c'm')$ are isomorphic in reflection if and only if the multisets $\col k1{bn}n{cm}m$ and $\col {k'}1{b'n'}{n'}{c'm'}{m'}$ are equal.
\end{theorem}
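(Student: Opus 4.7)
The ``if'' direction is immediate from Remark~\ref{isopermute}: equality of the multisets $\col k1{bn}n{cm}m = \col{k'}1{b'n'}{n'}{c'm'}{m'}$ amounts to a permutation of the columns of the underlying $J$-group matrix, and column permutation preserves the reflection isomorphism type.

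For the converse, let $\varphi\colon W_b^c(k,bn,cm)\iso W_{b'}^{c'}(k',b'n',c'm')$ be a reflection isomorphism. My strategy is to associate to each of the three columns of $\begin{pmatrix} k & bn & cm \\ 1 & n & m\end{pmatrix}$ a conjugacy class of reflecting hyperplanes in $W_b^c(k,bn,cm)$, together with two invariants that recover the column pair $(a,a')$. By hypothesis, $\varphi$ sends reflections to reflections and commutes with conjugation; it therefore induces a bijection between the sets of reflecting hyperplanes (cyclic subgroups generated by a reflection) and between their conjugacy classes.

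Using Presentation~\eqref{GeneralPresW}, I would first argue that $W_b^c(k,bn,cm)$ admits exactly three conjugacy classes of reflecting hyperplanes, represented by $\langle x_1\rangle$, $\langle y\rangle$ and $\langle z\rangle$, whose reflections have orders $k$, $b$ and $c$ respectively. These orders are preserved by $\varphi$ and already recover the three ratios $k/1$, $bn/n$, $cm/m$. I would then attach to each class a second reflection-isomorphism invariant that pins down the column exponent $1$, $n$ or $m$. A natural candidate is the size of the $W$-orbit of the hyperplane representative: the long relations of Presentation~\eqref{GeneralPresW} can be used to show that each $x_{i+r}$ is a conjugate of $x_i$ (here $r$ is the residue of $m$ modulo $n$, which is coprime to $n$ since $n\wedge m =1$), so that the orbit of $\langle x_1\rangle$ contains the $n$ distinct hyperplanes $\langle x_1\rangle,\dots,\langle x_n\rangle$, and the analogous statements for the other two columns follow by the column-permutation symmetry of Remark~\ref{isopermute}.

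The main obstacle is this second step: while the preservation of reflection orders is transparent, establishing that the exponents $\{1,n,m\}$ are intrinsically recoverable, and in the correct pairing with the orders $\{k,b,c\}$, requires a careful control of conjugacy in $W_b^c(k,bn,cm)$. It must be checked that the $W$-orbit of each of $\langle x_1\rangle$, $\langle y\rangle$, $\langle z\rangle$ consists of no extraneous conjugates beyond the expected ones, using the relations in Presentation~\eqref{GeneralPresW} together, if needed, with Theorem~\ref{CenterFINDANAME} (the center) and an analysis of the abelianization. Once the three (order, exponent) pairs are shown to be reflection-isomorphism invariants, $\varphi$ transports them from the source to the target, forcing the two multisets to coincide. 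The overall strategy directly generalises the proof of Gobet's classification \cite[Theorem~1.2]{Gobet Toric} in the toric case $b=c=1$, where the second coordinates $\{n,m\}$ are recovered from topological data of the associated torus knot group.
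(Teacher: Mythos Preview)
The paper does not prove this theorem: it is quoted verbatim from \cite[Theorem~3.11]{VCRG} and no argument is given here. There is therefore nothing in the present paper to compare your proposal against; what follows is an assessment of your sketch on its own terms.

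Your ``if'' direction is fine. For the converse, the overall shape (extract reflection-invariant numerical data attached to conjugacy classes of reflecting hyperplanes) is reasonable, but two points would derail the argument as written. First, the claim that $W_b^c(k,bn,cm)$ has \emph{exactly three} conjugacy classes of reflecting hyperplanes is not uniform: when $b=1$ the element $y$ is trivial and contributes no hyperplane, and similarly for $c=1$. The classification must therefore split into cases according to how many of $b,c$ equal $1$, and in each case you must explain how the ``missing'' column $(bn,n)$ or $(cm,m)$ is still recoverable from the group (note that with $b=1$ the pair $(bn,n)=(n,n)$ carries only the single datum $n$, so the problem changes character). Second, your proposed second invariant, the size of the $W$-orbit of a hyperplane, is not the quantity you need. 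You show that the orbit of $\langle x_1\rangle$ \emph{contains} $\langle x_1\rangle,\dots,\langle x_n\rangle$, but even granting these are distinct, the orbit is typically much larger (already in finite rank-two complex reflection groups the number of hyperplanes in a class is not the column exponent), and by symmetry the orbits of $\langle y\rangle$ and $\langle z\rangle$ are not singletons either. So ``orbit size'' does not return $1,n,m$, and you would have to replace it by a genuinely different invariant. You gesture at the abelianisation; that, together with the order of the center from Theorem~\ref{CenterFINDANAME}, is closer to what is actually used in \cite{VCRG}, but making it work requires real computation rather than the orbit heuristic you propose.
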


\subsection{\texorpdfstring{$J$-braid groups}{}}
The braid groups of irreducible complex reflection groups of rank two can be obtained combinatorially from their BMR presentation by removing the torsion of the generators (\cite{Bannai}). More generally, the above is true for any complex braid group (\cite{Bannai},\cite{BessisKP},\cite{BessisMichel},\cite{BrieskornF},\cite{BMR},\cite{Garnier31}). This theorem motivates the (combinatorial) definition of the braid group of any $J$-reflection group:

\begin{definition}[{\cite[Definitions 3.1-3.4]{BraidsJIgor}}]\label{DefBraid}
Let $n,m\in \N^*$ be two coprime integers and let $m=qn+r$ with $0\leq q$ and $0\leq r\leq n-1$. Let $\B_*^*(n,m)$ be the group defined by the following presentation:
\begin{subequations}\label{BraidPres}
\begin{align}
&(1) \,\, \mathrm{Generators}\!:\,  \{x_1,\dots,x_n,y,z\};\notag\\
&(2) \,\, \mathrm{Relations}\!: \,\notag\\
  &  x_1\cdots x_nyz=zx_1\cdots x_ny,\label{BraidPresDef:1}\\
       & x_{i+1}\cdots x_nyz\delta^{q-1}x_1\cdots x_{i+r}=x_i\cdots x_nyz\delta^{q-1}x_1\cdots x_{i+r-1}, \, \forall 1\leq i \leq n-r,\label{BraidPresDef:2}\\
      &  x_{i+1}\cdots x_nyz\delta^qx_1\cdots x_{i+r-n}=x_i\cdots x_nyz\delta^qx_1\cdots x_{i+r-n-1},\, \forall n-r+1\leq i \leq n,\label{BraidPresDef:3}
\end{align}
\end{subequations}
where $\delta$ denotes $x_1\cdots x_ny$.\\
Given $k,b,n,c,m\in\N^*$ such that $k,bn,cm\geq 2$, we now define the \textbf{braid group associated to} $W_b^c(k,bn,cm)$ as follows:\\
$\bullet$ If $b,c\geq 2$ we define it as $\B_*^*(n,m)$.\\
$\bullet$ If $b\geq 2$ and $c=1$ we define it as $\B_*(n,m):=\B_*^*(n,m)/\llangle z\rrangle$.\\
$\bullet$ If $b=1$ and $c\geq 2$ we define it as $\B^*(n,m):=\B_*^*(n,m)/\llangle y\rrangle$.\\
$\bullet$ If $b,c=1$ we define it as $\B(n,m):=\B_*^*(n,m)/\llangle y,z\rrangle$.\\
When useful, we refer to the above groups as $\B_J(W_b^c(k,bn,cm))$. Note that $\B(n,m)$ is already defined as $\B_J(W(k,n,m))$ in \cite[Introduction]{Gobet Toric}.\\



\end{definition}

\begin{remark}
In Definition \ref{DefBraid}, the braid groups of $J$-reflection groups are defined from specific presentations of $J$-reflection groups rather than from the group itself. Therefore, some classification results are needed to show that the braid group only depend on the reflection isomorphism type of $J$-braid groups (see Theorem \ref{WellDefBraids} below).
\end{remark}

\begin{remark}\label{SquareBraid}
Let $n,m\geq 2$ be two coprime integers. We have the following commutative square of quotients:

\begin{equation}\label{CommFINDANAMEbraids}
\xymatrix{\B_*^*(n,m) \ar[r]^{y=1}\ar[d]^{z=1} &\B^*(n,m)\ar[d]_{z=1} & \\
\B_*(n,m) \ar[r]^{y=1}& \B(n,m) & 
}\end{equation}
If $n=1$ (respectively $m=1$) we still have an epimorphism 
$\B_*^*(n,m)\xto{z=1} \B_*(n,m)$ (respectively $\B_*^*(n,m)\xto{y=1} \B^*(n,m)$), but the Square \eqref{CommFINDANAMEbraids} contains non-defined groups.
\end{remark}

We call \textbf{$J$-braid groups} the braid groups associated to $J$-reflection groups. In the case where $W_b^c(k,bn,cm)$ is finite, the presentation of its $J$-braid group coincides with the BMR presentation of its complex braid group. In particular, $J$-braid groups generalise rank two complex braid groups. Moreover, for any $J$-reflection group $W$, the natural quotient $\B_J(W)\to W$ sends conjugates of non-trivial powers of elements in $\{x_1,\dots,x_n,y,z\}$ to reflections of $W$. Thus, these elements can be seen as (generalised) braid reflections and we refer to them as such:
\begin{definition}
Let $W$ be a $J$-reflection group. The non-trivial powers of elements in $\{x_1,\dots,x_n,y,z\}\subset \B_J(W)$ are \textbf{braid reflections}.
\end{definition}

\begin{definition}[{\cite[Definition 2.1]{GarnierHoso}}]\label{CircularGroups}
Let $n,m\in \N^*$. The circular group $G(n,m)$ is defined by the presentation 
\begin{equation}\label{PresCircular}
\langle a_1,\dots,a_n\,|\, a_1\cdots a_m=a_2\cdots a_{m+1}=\cdots=a_n\cdots a_{n+m-1}\rangle,
\end{equation}
where indices are taken modulo $n$. Denote by $\alpha$ the element $a_1\cdots a_n$.
\end{definition}

These groups are Garside groups, as the corresponding monoids are Garside:

\begin{proposition}[{\cite[Example 5]{Origines}}]\label{CircularGarside}
Let $n,m\in\N^*$. The monoid with Presentation \eqref{PresCircular} is a Garside monoid.
\end{proposition}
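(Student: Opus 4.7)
The plan is to verify directly that the monoid satisfies the classical axioms of a Garside monoid, namely cancellativity, atomicity, existence of left and right lattice structures under divisibility, and existence of a Garside element whose left and right divisor sets coincide and generate the monoid. I will use Dehornoy's complemented-presentation framework as the main technical tool.

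First I would identify a candidate Garside element, the natural choice being $\Delta := a_1 a_2 \cdots a_m$. By the defining relations of Presentation \eqref{PresCircular}, $\Delta$ equals each cyclic shift $a_{i+1} a_{i+2} \cdots a_{i+m}$, so every atom $a_i$ is both a left and a right divisor of $\Delta$; atomicity is automatic from the fact that every relation is homogeneous of length $m$. Then I would present the relations as a complemented presentation in the sense of Dehornoy: for each ordered pair of atoms $(a_i, a_j)$, I read off the unique positive word $f(a_i, a_j)$ such that $a_j \cdot f(a_i, a_j) = a_i \cdot f(a_j, a_i)$ from the cyclic chain of identified products connecting the two generators.

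Next I would verify Dehornoy's cube condition, stating that for each triple of atoms $(a_i, a_j, a_k)$ the two possible orders of performing right-reversal on $a_i a_j a_k$ produce the same positive word. Once this is established, Dehornoy's reversal theorem yields cancellativity and existence of least common right multiples of every pair of elements, hence the right-lattice property; the left-lattice property then follows either by the symmetric left-reversal argument or by exploiting the obvious anti-automorphism permuting the generators cyclically. Finally, one checks directly that the set of divisors of $\Delta$ generates the monoid, confirming that $\Delta$ is indeed a Garside element.

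The main obstacle will be verifying the cube condition uniformly in $n$ and $m$. The complement words $f(a_i, a_j)$ have different shapes depending on the relative positions of $i$ and $j$ modulo $n$ and on the parameter $m$, so a naive case analysis quickly becomes unwieldy. However, the cyclic $\Z/n\Z$-symmetry of the relations reduces the verification to a handful of orbit representatives, which is precisely the content of Example 5 of the cited reference of Dehornoy.
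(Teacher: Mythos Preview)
The paper does not give its own proof of this proposition; it is stated with a bare citation to \cite[Example 5]{Origines} and used as a black box thereafter. Your outline is precisely the strategy carried out in that reference (homogeneity gives atomicity, the complemented presentation and cube condition give cancellativity and lcm's, and $\Delta=a_1\cdots a_m$ is the Garside element), and you yourself note this in your final sentence, so there is nothing to compare: your proposal and the paper's ``proof'' both defer to the same source.

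One small imprecision worth flagging: in your description of the right complement you write $a_j\cdot f(a_i,a_j)=a_i\cdot f(a_j,a_i)$, but the defining relations give $a_i\cdot(a_{i+1}\cdots a_{i+m-1})=a_j\cdot(a_{j+1}\cdots a_{j+m-1})$, so the complement of $a_i$ over $a_j$ is $a_{i+1}\cdots a_{i+m-1}$, which depends only on $i$ and not on $j$. This actually simplifies the cube-condition verification rather than complicating it, but your phrasing suggests you expect a genuine dependence on both indices.
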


We do not discuss Garside theory in detail in this article, and refer to \cite{GrosBouquinBleu} and \cite{Origines} for a precise exposition of the topic.
\begin{theorem}[{\cite[Corollary 2.11 ]{GarnierHoso}}]\label{CenterCircularGroups}
Let $n,m\in\N^*$. The center of $G(n,m)$ is cyclic, generated by $\alpha^{\frac m{n\wedge m}}$.
\end{theorem}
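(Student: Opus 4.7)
My plan is to exploit the Garside structure of $G(n,m)$ provided by Proposition \ref{CircularGarside}, with Garside element $\Delta := a_1 \cdots a_m$, and to identify the Garside-theoretic generator of the centre with the prescribed element $\alpha^{m/(n\wedge m)}$.

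First I would analyse the Garside automorphism $\tau$ given by conjugation by $\Delta$. Using that $\Delta = a_i a_{i+1}\cdots a_{i+m-1} = a_{i+1} a_{i+2}\cdots a_{i+m}$ for every $i$ (indices modulo $n$), a direct manipulation of Presentation \eqref{PresCircular} yields $a_i \Delta = \Delta a_{i+m}$, so $\tau$ acts on the generating set as the shift $a_i \mapsto a_{i+m}$. This permutation of $\Z/n\Z$ has order $n' := n/(n \wedge m)$, so $\Delta^{n'}$ commutes with each generator and is therefore central. Standard Garside-theoretic arguments (see \cite{GrosBouquinBleu}) then identify $Z(G(n,m))$ with the cyclic subgroup $\langle \Delta^{n'} \rangle$.

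Next I would prove the identity $\alpha^{m} = \Delta^{n}$ in $G(n,m)$. Writing $\alpha^{m}$ as the concatenation $a_1 a_2 \cdots a_n \, a_1 a_2 \cdots a_n \, \cdots \, a_1 a_2 \cdots a_n$ of $nm$ letters and grouping it into $n$ successive blocks of $m$ consecutive letters, each block consists of $m$ cyclically consecutive generators and thus equals $\Delta$ by the defining relation of \eqref{PresCircular}. Setting $d := n \wedge m$, $n = d n'$, $m = d m'$, this rewrites as $(\alpha^{m'})^{d} = (\Delta^{n'})^{d}$.

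Finally I would extract $d$-th roots. Since $\Delta^{n'}$ is central, it commutes with $\alpha^{m'}$, so $(\alpha^{m'} \Delta^{-n'})^{d} = 1$; Garside groups being torsion-free, this forces $\alpha^{m'} = \Delta^{n'}$, whence $Z(G(n,m)) = \langle \alpha^{m/(n\wedge m)} \rangle$. The main technical input is the Garside-theoretic description of $Z(G(n,m))$ as the subgroup generated by the smallest central power of $\Delta$; once that is granted, the remaining work is the block decomposition of $\alpha^{m}$ together with a torsion-freeness argument, both straightforward.
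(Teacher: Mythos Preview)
The paper does not prove this statement itself; it is simply quoted from \cite[Corollary~2.11]{GarnierHoso}. Your outline is the natural Garside-theoretic route and is essentially what that reference carries out: identify the Garside element $\Delta=a_1\cdots a_m$, compute that conjugation by $\Delta$ shifts the atoms by $m$ modulo $n$ and hence has order $n'=n/(n\wedge m)$, so that $\Delta^{n'}$ is the smallest central power of $\Delta$. Your identification $\alpha^{m'}=\Delta^{n'}$ via the block decomposition $\alpha^{m}=\Delta^{n}$ combined with torsion-freeness of Garside groups is clean and correct.

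One caveat worth flagging: the step ``$Z(G(n,m))=\langle\Delta^{n'}\rangle$'' is not a fully general Garside fact --- it fails, for instance, for $\Z^2$ with its obvious Garside structure. What is needed is an argument, specific to the circular monoid, that any central element written in greedy normal form must be a pure power of $\Delta$; this uses the explicit description of the lattice of simples and is the genuine content of \cite{GarnierHoso}. You rightly call this the ``main technical input'', but citing \cite{GrosBouquinBleu} alone does not dispatch it: you would still need to check the relevant hypothesis (e.g.\ that the fixed subgroup of the Garside automorphism on the simples is trivial, or an equivalent criterion) for $G(n,m)$.
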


The following theorems show that $J$-braid groups are well defined up to reflection isomorphisms and that they are (abstractly) isomorphic to well-known groups: 
\begin{theorem}[{\cite[Theorem 3.18 and 3.26]{BraidsJIgor}}]\label{IsoBraidsJIgor}
Let $W:=W_b^c(k,bn,cm)$ be a $J$-reflection group. The following hold: \\
(i) If $b,c>1$, the group $\B_J(W)$ is isomorphic to $G(3,3)$.\\
(ii) If $b>1$ and $c=1$, the group $\B_J(W)$ is isomorphic to $G(2,2m)$.\\
(iii) If $b=1$ and $c>1$, the group $\B_J(W)$ is isomorphic to $G(2n,2)$.\\
(iv) If $b=c=1$, the group $\B_J(W)$ is isomorphic to $G(n,m)$.
\end{theorem}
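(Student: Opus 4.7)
The plan is to prove each of the four cases separately, by constructing an explicit isomorphism between the $J$-braid group and the target circular group via Tietze transformations on the presentations.

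Case (iv) is the cleanest. After setting $y = z = 1$ in Presentation~\eqref{BraidPres} and writing $\delta = x_1 \cdots x_n$, a length count shows that the left-hand side $x_{i+1} \cdots x_n \delta^{q-1} x_1 \cdots x_{i+r}$ of relation~\eqref{BraidPresDef:2} is the product of exactly $m = qn + r$ consecutive letters $x_j$ (with indices modulo $n$) starting at $x_{i+1}$, while the right-hand side is the analogous product of $m$ consecutive letters starting at $x_i$. Since $G(n,m)$ is defined by the condition that the product of $m$ consecutive generators in cyclic order is independent of the starting point, the assignment $x_i \leftrightarrow a_i$ identifies the two presentations directly; relations~\eqref{BraidPresDef:3} handle the wrap-around cases in the same way.

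For cases (i)--(iii), the source $\B_J(W)$ has more generators than the target circular group, so the isomorphism must express some of the $x_i$'s as words in fewer generators. The strategy is to use relations~\eqref{BraidPresDef:2}--\eqref{BraidPresDef:3} to iteratively solve for $x_2, \dots, x_n$ in terms of $x_1$ and the remaining nontrivial elements of $\{y, z\}$. As a sanity check, in case~(ii) with $n = 3$, $m = 1$, relation~\eqref{BraidPresDef:3} at $i = n$ gives $x_n = y x_1 y^{-1}$, and the remaining relations force $x_2 = x_3 = x_1$ together with $[x_1, y] = 1$, collapsing the presentation to $G(2, 2) \cong \Z^2$. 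In general for case~(ii), the above elimination together with a length count in the cyclic alphabet $x_1, \dots, x_n, y$ should reduce the remaining relation to $(x_1 y)^m = (y x_1)^m$, which is the defining relation of $G(2, 2m)$. Case~(iii) is dual, involving $z$ in place of $y$ and producing $2n$ generators (the $x_i$'s together with their $z$-conjugates) that realise $G(2n, 2)$. Case~(i) is the most intricate: the natural candidates $a_1 = x_1 \cdots x_n$, $a_2 = y$, $a_3 = z$ make relation~\eqref{BraidPresDef:1} coincide with the $G(3,3)$ relation $a_1 a_2 a_3 = a_3 a_1 a_2$, and relations~\eqref{BraidPresDef:2}--\eqref{BraidPresDef:3} should provide the remaining $G(3,3)$ relation $a_1 a_2 a_3 = a_2 a_3 a_1$ together with enough constraints to express each individual $x_i$ as a word in $a_1, a_2, a_3$.

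The main obstacle is the bookkeeping in cases (i)--(iii): one has to verify that the sequence of Tietze transformations is consistent, and that after eliminating the intermediate generators the remaining relations are exactly those of the target circular group. I would first treat small cases (for instance $n = m = 1$ in case~(i), where the presentation reduces directly to the system $u v w = v w u = w u v$ with $u = x_1$, $v = y$, $w = z$, visibly $G(3,3)$) and then exploit the symmetries of Presentation~\eqref{BraidPres} --- notably the interchange $y \leftrightarrow z$ which swaps cases~(ii) and (iii) --- to halve the casework.
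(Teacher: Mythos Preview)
Your case~(iv) is correct and is exactly the observation the paper uses (it is immediate from the definition of $\B(n,m)$ as $\B_*^*(n,m)/\llangle y,z\rrangle$: once $y=z=1$, each side of \eqref{BraidPresDef:2}--\eqref{BraidPresDef:3} is a product of $m$ cyclically consecutive $x_j$'s, and this is literally Presentation~\eqref{PresCircular}).

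Your case~(i) contains a genuine error. The candidates $a_1=x_1\cdots x_n$, $a_2=y$, $a_3=z$ cannot give an isomorphism for $n\geq 2$, because these three elements do not even generate $\B_*^*(n,m)$. Indeed, the relations \eqref{BraidPresDef:2}--\eqref{BraidPresDef:3} are conjugation relations of the form $x_{i+r}=(\text{word})\,x_i\,(\text{word})^{-1}$, so in the abelianisation all $x_i$ become equal to a single class $x$, and $\B_*^*(n,m)^{\mathrm{ab}}\cong\Z^3$ with basis $x,y,z$. Your element $x_1\cdots x_n$ maps to $nx$; hence $\{x_1\cdots x_n,y,z\}$ generates the abelianisation only when $n=1$. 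The hoped-for step ``express each individual $x_i$ as a word in $a_1,a_2,a_3$'' is therefore impossible in general. Your sanity check $n=m=1$ is precisely the one case where the obstruction vanishes.

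The approach actually used (in \cite{BraidsJIgor}, and generalised in Section~5 of the present paper) is quite different. One first proves that the element $\Delta=\delta^{m}z^{n}=(x_1\cdots x_ny)^m z^n$ is central in $\B_*^*(n,m)$ (this requires the commutation lemmas $x_iw=wx_{i+r}$ and $x_{n-r+k}W=Wx_k$ for the words $w,W$ of Definition~\ref{DefwWDelta}). The map $\psi\colon G(3,3)\to\B_*^*(n,m)$ is then built so that $\psi(a_1a_2a_3)=\Delta$; the correct images involve arithmetic in $(\Z/n)^\times$ and $(\Z/m)^\times$ (Propositions~\ref{LtoG(d+2,d+2)} and~\ref{G(d+2,d+2)toL} with $d=1$ give $a_3\mapsto x_1$, $a_1\mapsto x_2\cdots x_n y\,z^{\,n-m_{(n)}}\delta^{\,n_{(m)}-1}$, $a_2\mapsto z^{\,m_{(n)}}\delta^{\,m-n_{(m)}}$). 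Surjectivity is then obtained from Lemma~\ref{SmallGen}, which identifies $\{x_1,\,x_2\cdots x_ny,\,z\}$---not $\{x_1\cdots x_n,y,z\}$---as a generating set. Your cases~(ii) and~(iii) are then derived from case~(i) by passing to the quotient by $z$ (resp.\ $y$), again with explicit formulas rather than elimination; the ``dual'' symmetry you invoke is Proposition~\ref{IsotopicLinks} on the link side, not a simple swap of letters in the presentation.
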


\begin{theorem}[{\cite[Theorem 3.19, Corollary 3.29 and Proposition 3.37]{BraidsJIgor}}]\label{WellDefBraids}
Let $W_1$ and $W_2$ be $J$-reflection groups. The following hold:\\
(i) The center of $\B_J(W_1)$ is cyclic and sent onto the center of $W_1$ under the natural quotient $\B_J(W_1)\to W_1$.\\
(ii) If $W_1$ and $W_2$ are isomorphic in reflection, there exists an isomorphism between $\B_J(W_1)$ and $\B_J(W_2)$ sending braid reflections to braid reflections.
\end{theorem}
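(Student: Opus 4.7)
For part (i), the plan is to leverage the identification of $\B_J(W_1)$ with a concrete circular group established in Theorem \ref{IsoBraidsJIgor}. Together with Theorem \ref{CenterCircularGroups}, this immediately yields that $Z(\B_J(W_1))$ is cyclic. To prove that this center surjects onto $Z(W_1)$ under the natural quotient $\B_J(W_1)\twoheadrightarrow W_1$, I would exhibit a single central element of $\B_J(W_1)$ whose image generates $Z(W_1)$. By Theorem \ref{CenterFINDANAME}, $Z(W_1)$ is cyclic, generated by $(x_1\cdots x_n y)^m z^n$, so the natural candidate is the same word $\zeta := (x_1\cdots x_n y)^m z^n$ interpreted in $\B_J(W_1)$. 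Centrality of $\zeta$ would then be checked by direct manipulation against each generator $x_i$, $y$, $z$ using the relations \eqref{BraidPresDef:1}--\eqref{BraidPresDef:3}; note that \eqref{BraidPresDef:1} already gives $z\delta = \delta z$ with $\delta := x_1\cdots x_n y$, so $z^n$ and $\delta^m$ commute, reducing the task to commutation with the $x_i$'s and with $y$. Since $Z(\B_J(W_1))$ is cyclic and contains $\zeta$, its image in $W_1$ is cyclic, contained in $Z(W_1)$, and contains a generator of $Z(W_1)$, hence equals it.

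For part (ii), the strategy is to reduce to a finite case analysis via the classification of $J$-reflection groups up to reflection isomorphism. By \cite[Theorem 3.11]{VCRG}, $W_1\cong W_2$ in reflection is equivalent to equality of the multisets $\col k 1 {bn} n {cm} m$; equivalently (Remark \ref{isopermute}), the $J$-symbol of $W_2$ is obtained from that of $W_1$ by a permutation of its three columns. It therefore suffices to construct, for each column transposition, an explicit isomorphism of the associated $J$-braid groups that carries the generating set $\{x_1,\ldots,x_n,y,z\}$ to a family of braid reflections in the target. I would use Theorem \ref{IsoBraidsJIgor} as bookkeeping, identifying both sides with concrete circular groups $G(n',m')$ and tracking the image of each generator through the composed identifications. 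The desired isomorphism then follows by composing the isomorphisms associated to a sequence of column transpositions realising the given reflection isomorphism.

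The main obstacle I expect lies in this case analysis: a column permutation of the $J$-symbol can swap $b$ and $c$ between the values $1$ and $\geq 2$, which changes the shape of the braid presentation through the quotients of Remark \ref{SquareBraid}, and it can change the parameter $n$ that controls the number of generators $x_1,\ldots,x_n$. Consequently, a braid reflection on one side will typically map to a conjugate of a non-trivial power of a generator on the other side rather than to a generator itself, and verifying that an explicit assignment extends to a well-defined homomorphism requires repeatedly rewriting words using the shifting relations \eqref{BraidPresDef:2}--\eqref{BraidPresDef:3}. Matching braid reflections along a column transposition that re-parametrises the rank is thus the most delicate step.
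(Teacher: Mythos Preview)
This theorem is not proved in the present paper; it is quoted from \cite{BraidsJIgor} (Theorem 3.19, Corollary 3.29 and Proposition 3.37) and used as input. So there is no ``paper's own proof'' to compare against here. That said, your plan is consonant with the machinery the paper does develop and with the arguments in the cited source.

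For part (i), your strategy is exactly right and is in fact realised in this paper in a slightly more general form: Lemma~\ref{DeltaEqProduct} shows $\Delta=\delta^{m'}z^{n'}$ (which in the coprime case is your $\zeta=(x_1\cdots x_ny)^mz^n$), and Proposition~\ref{DeltaCentral} shows $\Delta$ is central in $\mathcal L_*^*(n,m)\cong\B_*^*(n,m)$, the proof being precisely the ``direct manipulation'' you sketch, carried out via Lemmas~\ref{xvw} and~\ref{xvW}. Combined with Theorems~\ref{IsoBraidsJIgor} and~\ref{CenterCircularGroups} (which you cite) this gives cyclicity of the center, and the surjection onto $Z(W_1)$ follows from Theorem~\ref{CenterFINDANAME} exactly as you say. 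So your plan for (i) is correct and matches what the paper and its references do.

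For part (ii), your reduction to column transpositions via \cite[Theorem 3.11]{VCRG} and Remark~\ref{isopermute} is again the natural route and is the one taken in \cite{BraidsJIgor}. Your identification of the main obstacle is accurate: the delicate cases are the transpositions that change which of $b,c$ equals $1$ or that swap $n$ and $m$, since these alter both the number of $x_i$-generators and which generators survive the quotients in Remark~\ref{SquareBraid}. What you have written is a correct outline rather than a proof; the actual verification that the explicit maps respect the shifting relations \eqref{BraidPresDef:2}--\eqref{BraidPresDef:3} and send each generator to a conjugate of a power of a generator is the substantive content of \cite[\S3]{BraidsJIgor}, and you should not underestimate the bookkeeping involved.
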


\section{Link groups}
In this section, we recall how to compute link groups using braid closures. Given a (geometric) braid $\beta\in B_n$, denote its closure by $\widehat{\beta}$. Recal the following celebrated theorem due to Alexander:
\begin{theorem}[{\cite{Alexander}, see also \cite[Theorem 2.3]{KasselTuraev}}]
Every link is the closure of some braid. 
\end{theorem}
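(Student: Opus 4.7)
The plan is to give the classical proof of Alexander's theorem, based on the Alexander trick. First, I would fix a link $L\subset \R^3\subset S^3$ and put it in general position with respect to a regular projection onto a plane $P\subset \R^3$, giving an oriented link diagram $D$. I would then choose a point $p\in P$ disjoint from $D$, together with a half-line $\ell\subset P$ starting at $p$ and transverse to $D$. The goal is to isotope $L$ so that it becomes the closure of a braid whose braid axis is the line through $p$ perpendicular to $P$; equivalently, after isotopy, every smooth arc of $D$ (away from crossings) should wind monotonically around $p$ in the positive direction with respect to the chosen orientation.

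Next I would subdivide $D$ into finitely many oriented edges, avoiding the crossings, and call an edge \emph{positive} if the polar angle around $p$ increases monotonically along it, and \emph{negative} otherwise. The key step is the Alexander trick: given a negative edge $e$, I would subdivide it into sub-arcs small enough that each sub-arc lies in a disk $\Delta$ meeting $\ell$ in a single subarc and not containing $p$. Then each such sub-arc can be pulled, either over or under the rest of the diagram (choosing over/under to match the local over/under information at each crossing it crosses), across the point $p$, and replaced by two positive edges meeting on the far side of $p$. This produces an ambient isotopy of $L$ and strictly decreases the count of negative edges. Iterating eliminates all negative edges in finitely many moves.

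After this procedure, every edge of $D$ is positive and $L$ lies entirely in the region where the polar angle around $p$ strictly increases along the orientation. Cutting along the half-line $\ell$ and recording the permutation of the strands together with their over/under crossings yields an element $\beta$ of the braid group $B_n$ (where $n$ is the cardinality of $L\cap\ell$), and by construction $L$ is ambient isotopic to $\widehat{\beta}$.

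The main obstacle is verifying that the Alexander trick can be performed coherently in the presence of crossings: the sub-arc to be flipped over $p$ must be short enough to lie in a neighborhood where no crossing information is lost, and one must choose consistently between passing over or under the remaining strands so that the resulting diagram still represents $L$ up to isotopy. This is essentially a compactness argument combined with careful local control; once it is set up, induction on the number of negative edges gives termination.
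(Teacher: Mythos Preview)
The paper does not actually prove this statement: it merely recalls Alexander's theorem as a classical result, citing \cite{Alexander} and \cite[Theorem 2.3]{KasselTuraev}, and then moves on to use it. There is therefore no ``paper's own proof'' to compare your proposal against.

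That said, your outline is the standard argument and is essentially correct. A few minor points of precision: when you flip a negative sub-arc over the axis point $p$, you are free to choose whether the replacement arc passes entirely over or entirely under the rest of the diagram (not matching crossing-by-crossing as you wrote), since the sub-arc is chosen small enough to contain no crossings of the original diagram; this is what makes the choice unambiguous. Also, the induction is cleaner if you count, say, the number of intersection points of negative edges with a fixed ray rather than the number of negative edges themselves, but either bookkeeping works. Overall your sketch would be accepted as a proof of the cited theorem.
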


Recall also that the $n$-strand braid group admits the presentation

\begin{equation*}\label{BraidPresentationArtinIntro}
  \left\langle 
\begin{array}{c}
    \sigma_1, \sigma_2, \dots, \sigma_{n-1} 
\end{array}
\;\middle|\;
\begin{array}{c}
 \sigma_i \sigma_{i+1} \sigma_i = \sigma_{i+1} \sigma_i \sigma_{i+1} \quad \text{for } 1 \leq i \leq n-2
   \\
     \sigma_i \sigma_j = \sigma_j \sigma_i \quad \text{for } |i - j| \geq 2
\end{array}
\right\rangle.
\end{equation*}

\begin{definition}\label{ArtinAct}
The braid group $B_n$ acts on the right on $F_n=\langle x_1,\dots,x_n\, |\, \emptyset \rangle$ in the following way: The generator $\s_i$ acts trivially on $\{x_1,\dots,x_n\}\backslash \{x_i,x_{i+1}\}$ and we have 
\begin{equation}\label{ArtinEq}
    \begin{aligned}
 \begin{cases} x_i\bigcdot \s_i&=x_ix_{i+1}x_i^{-1}, \\x_{i+1}\bigcdot \s_i&=x_i.\end{cases}
    \end{aligned}
\end{equation}
This action is called the \textbf{Artin representation} of $B_n$. Given a braid $\beta\in B_n$, we write $\rho_\beta$ for the automorphism of $F_n$ induced by $\beta$.
\end{definition}

\begin{remark}\label{StableProduct}
For all $i\in [n]$ we have $(x_ix_{i+1})\bigcdot \s_i=x_ix_{i+1}$, In particular, for $s\leq i\leq t$ we have $(x_s\cdots x_{t+1})\bigcdot \s_i=x_s\cdots x_{t+1}$.
\end{remark}

Recall that $\R^3$ can be seen as a subset of the 3-dimensional sphere $S^3$, and that given a link $L$, the map $\R^3\into S^3$ induces an isomorphism $\pi_1(\R^3\backslash L)\to \pi_1(S^3\backslash L)$.\\

We have the following theorem:
\begin{theorem}[{\cite[Satz 6]{ArtinZopfe}, \cite[Theorem 2.2]{BirmanBook}}]\label{LinkGroup}
Let $\beta \in B_n$. The link group $\pi_1(S^3\backslash \widehat{\beta})$ admits the presentation 
\begin{equation}\label{LinkPres}
    \begin{aligned}
     \left\langle x_1,\dots,x_n\, | \, x_i=\rho_\beta(x_i), \, \forall i\in [n] \right \rangle
    \end{aligned}.
\end{equation}
Moreover, for all $i\in [n]$ the generator $x_i$ represents the meridian which loops once around the $i$-th strand of $\beta$.  See Figure \ref{FigureMeridian} for an illustration.
\end{theorem}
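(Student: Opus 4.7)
The plan is to compute the Wirtinger presentation of the link complement from a planar diagram of $\widehat\beta$, then simplify it using the layered structure of a braid. First, I would present $\widehat\beta$ as the standard braid-closure diagram: the braid $\beta$ drawn in a rectangular strip, together with $n$ ``closing arcs'' going around the outside, where the $i$-th closing arc connects the top and bottom endpoints at position $i$. Each closing arc has no crossings and therefore contributes a single Wirtinger generator, which I would call $x_i$; by construction, $x_i$ is a small meridional loop around the $i$-th strand of $\beta$, which already establishes the ``moreover'' part of the statement.

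Second, I would propagate Wirtinger labels upward through the braid. At an elementary crossing corresponding to a letter $\sigma_j^{\pm 1}$, the Wirtinger relation expresses the two outgoing labels in terms of the two incoming ones via precisely the formulas of Equation \eqref{ArtinEq} (respectively its inverse). By induction on the number of crossings of $\beta$, if the bottom Wirtinger labels read $(x_1,\dots,x_n)$ from left to right, then the top labels read $(\rho_\beta(x_1),\dots,\rho_\beta(x_n))$.

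Third, because the $i$-th closing arc identifies the top endpoint at position $i$ with the bottom endpoint at position $i$, both the top and the bottom label at position $i$ must coincide with $x_i$. This produces the $n$ relations $x_i = \rho_\beta(x_i)$ of Presentation \eqref{LinkPres}. It remains to eliminate the Wirtinger generators associated to internal arcs of the braid: by the induction above, each such generator equals an explicit word in $x_1,\dots,x_n$, and the Wirtinger relations at the internal crossings reduce to tautologies after this substitution. A Tietze reduction then collapses the full Wirtinger presentation to exactly Presentation \eqref{LinkPres}.

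The main obstacle is not conceptual but notational: one must align the orientation conventions for the braid, for the meridians, and for the crossing signs so that the Wirtinger action at a crossing matches the action in Definition \ref{ArtinAct}, and one recovers $\rho_\beta$ rather than $\rho_{\beta^{-1}}$ or some conjugate. A more conceptual alternative that bypasses this bookkeeping is to identify $S^3\setminus\widehat\beta$ with the union, along a torus, of the mapping torus of the self-homeomorphism of $D^2\setminus\{n\text{ points}\}$ induced by $\beta$ with a complementary solid torus, and to apply van Kampen: the mapping torus contributes $F_n\rtimes_{\rho_\beta}\Z$, and gluing in the outer solid torus kills the $\Z$-generator, yielding exactly Presentation \eqref{LinkPres}.
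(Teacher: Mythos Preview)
The paper does not supply a proof of this theorem: it is quoted as a classical result of Artin (with the reference to Birman's monograph) and used as a black box. Your sketch is essentially the standard argument that appears in those sources --- compute the Wirtinger presentation of the braid-closure diagram, observe that propagating labels through a crossing is exactly the Artin action, and Tietze-reduce --- so there is nothing to compare against. Your alternative via the mapping torus of the punctured disc is also well known and equally valid; it trades the combinatorial bookkeeping for a single van Kampen computation.
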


\begin{figure}[h]
    \centering
        \includegraphics[width=0.5\textwidth]{Braids.png}
        \caption{Generators of Presentation \eqref{LinkPres} as meridians}
        \label{FigureMeridian}
\end{figure}

\begin{remark}\label{AutoFree}
Given any group $G$ with presentation $\langle S\, |\, R\rangle$ and any automorphism $\varphi\in\mathrm{Aut}(F_S)$, the group $\varphi(G)$ with presentation $\langle S\, | \, \varphi(R)\rangle$ is isomorphic to $G$ via the morphism $G\to\varphi(G)$ sending $s$ to (the element represented by) $\varphi(s)$. In particular, Theorem \ref{LinkGroup} implies that for any braids $\beta,\s\in B_n$ the group $\pi_1(S^3\backslash \widehat{\beta})$ admits the presentation
\begin{equation}\label{ChangeBasis}
\langle x_1,\dots,x_n\,|\, \rho_\s(x_i)=\rho_{\beta\s}(x_i), \, \forall i\in [n]\rangle.
\end{equation}
In this case, the generators of Presentation \eqref{ChangeBasis} are still meridians.
\end{remark}

\begin{definition}\label{DefPure}
Given a partition $(I_1,\dots,I_k)$ of $[n]$, define the $(I_1,\dots,I_k)$-pure braid group as the subgroup of $B_n$ consisting of braids whose associated permutation stabilises $I_l$ for all $l\in [k]$. For $I=(I_1,\dots,I_k)$, the $I$-pure braid group is denoted as $P^I_n$. If all but one part of $I$ (say $I_k$) are singletons, we simply write $P^I_n$ as $P^J_n$, where $J=\cup_{l=1}^{k-1}I_l$.
\end{definition}

\begin{remark}
Given $I$ defined as in Definition \ref{DefPure}, for all $l\in [k]$ we have a quotient $q^I_l:P^I_n\to P^{(I_1,\dots,\widehat{I_l},\dots,I_k)}_{n-|I_l|}$ consisting of removing all strings whose starting index lies in $I_l$.
\end{remark}

A consequence of the Wirtinger algorithm (see e.g \cite[Chapter 3.D]{Rolfsen} for an exposition on the Wirtinger presentation) to determine a presentation of link groups is the following result:
\begin{proposition}\label{LinkQuotient}
Given $\beta\in B_n$, write $\pi(\beta)\in S_n$ for its associated permutation. Write $\textbf c=(c_1,\dots,c_k)$ the partition of $[n]$ given by the disjoint cycle decomposition of $\pi(\beta)$. For all $j\in [k]$, the groups $\pi_1(S^3\backslash \widehat{q_j^\textbf c(\beta)})$ and $\pi_1(S^3\backslash \widehat{\beta})/\llangle \{x_i\}_{i\in c_j}\rrangle$ are isomorphic, where the $x_i$'s are as in Presentation \eqref{LinkPres}. Moreover, the quotient $\pi_1(S^3\backslash\widehat{\beta})\to \pi_1(S^3\backslash \widehat{q_j^\textbf c(\beta)})$ sends meridians to meridians.
\end{proposition}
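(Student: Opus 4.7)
The plan is to argue topologically: interpret the algebraic quotient as killing meridians around a sublink, and use a van Kampen argument. First I would observe that the connected components of $\widehat{\beta}$ are in natural bijection with the cycles of $\pi(\beta)$. When closing the braid $\beta$, the endpoint at top position $i$ is identified with the starting point at bottom position $\pi(\beta)(i)$, so the strands indexed by elements of a single cycle $c_j$ trace out exactly one connected component $L_j$ of $\widehat{\beta}$. Removing the strands of $\beta$ whose starting indices lie in $c_j$ (which is precisely what $q_j^\textbf c$ does) then removes the component $L_j$ from the closed link, so $\widehat{q_j^\textbf c(\beta)}$ is identified with $\widehat{\beta}\setminus L_j$ as a link in $S^3$.

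Next I would invoke the standard topological fact that for any link $L$ and any sublink $L_j$, the group $\pi_1(S^3\setminus (L\setminus L_j))$ is naturally isomorphic to $\pi_1(S^3\setminus L)$ modulo the normal closure of the meridians of $L_j$. This follows from van Kampen applied to the decomposition $S^3\setminus (L\setminus L_j) = (S^3\setminus L)\cup N$, where $N$ is a tubular neighborhood of $L_j$ small enough to avoid $L\setminus L_j$; each component of $N$ is an open solid torus in which the meridian of $L_j$ bounds a disk, so the pushout kills the meridians of $L_j$, while the longitudes of $L_j$, which together with the meridians generate $\pi_1(N\setminus L_j)$, are already represented by loops in $S^3\setminus L$.

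Combining these two facts with Theorem \ref{LinkGroup}, which describes each $x_i$ as the meridian around the $i$-th strand of $\beta$, one sees that for $i\in c_j$ the generator $x_i$ is a meridian of $L_j$. Since all meridians of a connected component of a link are conjugate in the link group, the normal closure $\llangle \{x_i\}_{i\in c_j}\rrangle$ coincides with the normal closure of the meridians of $L_j$, yielding the claimed isomorphism. The meridian-preserving property is then automatic: the generators $x_i$ for $i\notin c_j$ represent meridians of the surviving strands, and their images in the quotient are precisely the meridian generators of the Artin--Birman presentation of $\pi_1(S^3\setminus \widehat{q_j^\textbf c(\beta)})$ given by Theorem \ref{LinkGroup}.

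I expect the main subtlety to be the careful bookkeeping when identifying the strands of $q_j^\textbf c(\beta)\in B_{n-|c_j|}$ with the surviving strands of $\beta\in B_n$ (due to the index shift), but this is routine once the topological picture is in place. A purely algebraic alternative would be to manipulate Presentation \eqref{LinkPres} directly, setting $x_i=1$ for all $i\in c_j$ and verifying by induction on the length of $\beta$ as a word in the Artin generators $\sigma_i^{\pm 1}$ that the resulting relations on the remaining $x_i$ match $\rho_{q_j^\textbf c(\beta)}$; this would be more laborious and less transparent than the topological route sketched above.
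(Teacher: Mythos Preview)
Your argument is correct. The van Kampen decomposition you describe is standard and cleanly identifies the quotient with the complement of the sublink; the observation that the $x_i$ with $i\in c_j$ are all meridians of the same component $L_j$, hence normally generate the same subgroup as any single meridian of $L_j$, completes the proof.

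The paper itself does not prove this proposition: it simply asserts it as a consequence of the Wirtinger presentation. The Wirtinger route is more combinatorial than yours: in a diagram of $\widehat{\beta}$, setting to $1$ every Wirtinger generator corresponding to an arc of $L_j$ turns each crossing relation where $L_j$ is the overstrand into an identification of the two under-arcs (exactly what happens diagrammatically when the overstrand is erased), and trivialises the relations at crossings where $L_j$ is the understrand; the surviving presentation is then literally the Wirtinger presentation of $\widehat{\beta}\setminus L_j$. Your van Kampen argument is more conceptual and makes the meridian-to-meridian statement immediate, while the Wirtinger approach has the advantage of giving the isomorphism at the level of explicit presentations without any homotopy-theoretic input. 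Either route suffices here, and your write-up is more complete than what the paper provides.
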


\section{Torus necklaces}
\subsection{Definition of torus necklaces}
The $J$-braid groups with one conjugacy class of reflecting hyperplanes are precisely the torus knot groups $\mathrm T(n,m)$, which can be seen as the knot group of the closure of the braid $(\s_1\cdots \s_{n-1})^m\in B_n$. We introduce a generalisation of torus knots, which will play the same role for $J$-braid groups having more than one conjugacy class of braid reflections. 

\begin{notation}
Denote by $q_{n+1}$ the quotient $P^{\{n+1,n+2\}}_{n+2}\to P^{\{n+1\}}_{n+1}$ which removes the second to last strand and by $q_{n+2}$ the quotient $P^{\{n+1,n+2\}}_{n+2}\to P^{\{n+1\}}_{n+1}$ which removes the last strand. Finally, denote by $q$ the quotient $P^{\{n+1,n+2\}}_{n+2}\to B_n$ which removes the last two strands. 
\end{notation}

\begin{definition}
Let $n,m\in \N^*$. Define $\mathrm L_*^*(n,m)$ to be the closure of the braid $\textbf{b}_{n,m}=(\s_{n+1}\cdots \s_1)(\s_1\cdots \s_{n+1})(\s_1\cdots \s_{n-1}\s_n^2)^m\in P^{\{n+1,n+2\}}_{n+2}$. Define $\mathrm L_*(n,m)$ (respectively $\mathrm L^*(n,m)$) to be the closure of $q_{n+2}(\textbf{b}_{n,m})$ (respectively $q_{n+1}(\textbf{b}_{n,m})$). Additionally, define $\mathrm L(n,m)$ to be the closure of $q(\textbf{b}_{n,m})$, which is simply the $(n,m)$-torus link. A link of one of the above families is called a \textbf{torus necklace}.  See Figures \ref{B23} and \ref{L23} for some illustrations.
\end{definition}

\begin{remark}
The case of torus links $\mathrm L(n,m)$ is a standard example. See for example \cite[Chapter 3.E]{BZH} or \cite[Chapter 3.C]{Rolfsen} for an exposition about torus knots. See also \cite{RZ} for a presentation of all torus link groups (Presentation \eqref{PresTorusLinkRZ}) and a proof of the meridional rank conjecture for the torus links.
\end{remark}

\begin{figure}[htb]
\begin{subfigure}{0.2\textwidth}
\centering
\begin{tikzpicture}
\braid[number of strands=5, height=0.25cm, width=0.45cm,strand 1/.style={red},
style strands={1,2,3}{cyan},
style strands={4}{magenta},
style strands={5}{orange}] (braid)a_4 a_3 a_2 a_1 a_1 a_2 a_3 a_4 a_1 a_2 a_3 a_3 a_1 a_2 a_3 a_3 a_1 a_2 a_3 a_3 a_1 a_2 a_3 a_3  ;
\end{tikzpicture}
\caption{$\textbf{b}_{3,4}$}
  \end{subfigure}%
  \begin{subfigure}{0.25\textwidth}
  \centering
\begin{tikzpicture}
\braid[number of strands=4, height=0.4cm, width=0.6cm,
style strands={1,2,3}{cyan},
style strands={4}{orange}] (braid)a_3 a_2 a_1 a_1 a_2 a_3 a_1 a_2 a_1 a_2 a_1 a_2 a_1 a_2  ;

\end{tikzpicture}
 \caption{$q_4(\textbf{b}_{3,4})$}
  \end{subfigure}%
  \begin{subfigure}{0.2\textwidth}
  \centering
\begin{tikzpicture}
\braid[number of strands=4, height=0.35cm, width=0.5cm,
style strands={1,2,3}{cyan},
style strands={4}{magenta}] (braid) 
a_1 a_2 a_3 a_3 a_1 a_2 a_3 a_3 a_1 a_2 a_3 a_3 a_1 a_2 a_3 a_3 ;
\end{tikzpicture}
 \caption{$q_5(\textbf{b}_{3,4})$}
  \end{subfigure}%
\begin{subfigure}{0.2\textwidth}
\centering
\begin{tikzpicture}
\braid[number of strands=3, height=0.7cm, width=0.9cm,
style strands={1,2,3}{cyan}] (braid) 
a_1 a_2 a_1 a_2 a_1 a_2 a_1 a_2 ;
\end{tikzpicture}
 \caption{$q(\textbf{b}_{3,4})$}
  \end{subfigure} 
  
\caption{The braids $\textbf{b}_{3,4}$, $q_2(\textbf{b}_{3,4})$, $q_5(\textbf{b}_{3,4})$ and $q(\textbf{b}_{3,4})$ }
\label{B23}
\end{figure}

\begin{figure}[h]
    \centering
    \begin{subfigure}{0.4\textwidth} 
        \centering
        \includegraphics[width=\textwidth]{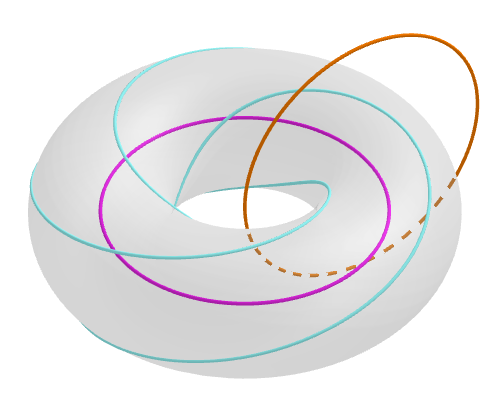}
        \caption{$\mathrm L_*^*(3,4)$}
        \label{fig:sousfig1}
    \end{subfigure}
        \begin{subfigure}{0.4\textwidth} 
        \centering
        \includegraphics[width=\textwidth]{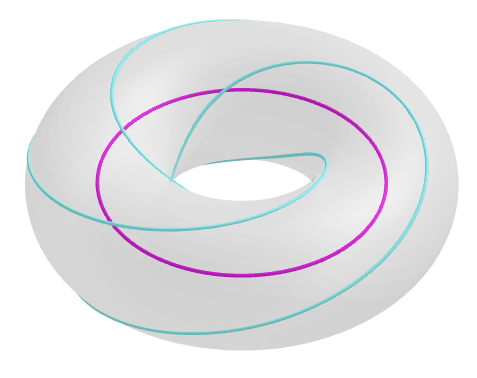}
        \caption{$\mathrm L_*(3,4)$}
        \label{fig:sousfig2}
    \end{subfigure}
        \begin{subfigure}{0.4\textwidth} 
        \centering
        \includegraphics[width=\textwidth]{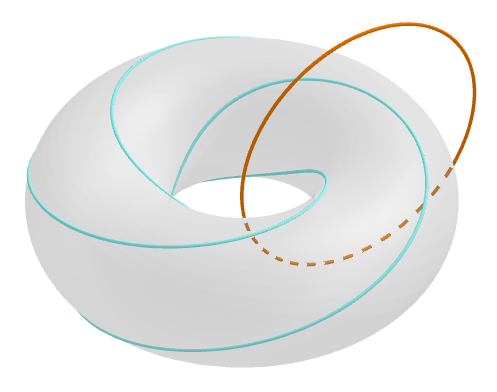}
        \caption{$\mathrm L^*(3,4)$}
        \label{fig:sousfig3}
    \end{subfigure}
     \begin{subfigure}{0.38\textwidth} 
        \centering
        \includegraphics[width=\textwidth]{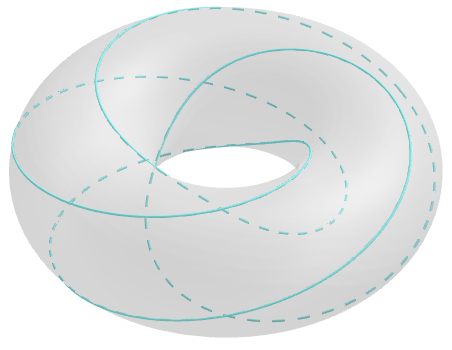}
        \caption{$\mathrm L(3,4)$}
        \label{fig:sousfig4}
    \end{subfigure}
\caption{The torus necklaces $\mathrm L_*^*(3,4)$, $\mathrm L_*(3,4)$, $\mathrm L^*(3,4)$ and $\mathrm L(3,4)$}
\label{L23}
\end{figure}

Seifert links are the links whose complement in $S^3$ are Seifert fiber space. 
They are natural to consider, as they play a central role in JSJ-decompositions of link complements (see \cite{JS},\cite{J} for an exposition on JSJ-decompositions of 3-manifolds and \cite{Budney} for the specific study in the case of link complements). It turns out that the family of torus necklaces almost coincides with the family of Seifert links. Indeed, it is shown in \cite[Theorem 1]{SeifertLinks} (see also \cite[Proposition 3.3]{Budney}) that the only Seifert links which are not torus necklaces are the key-chain links:
\begin{definition}\label{DefKeyChainLinks}
Let $k\in\N$. The key-chain link $\mathrm{K}(k)$ is the closure of the braid $\s_k\cdots\s_2s_1^2\s_2\cdots \s_k\in B_{k+1}$. See Figure \ref{KeyChainExamples} for an illustration.
\end{definition}

\begin{figure}
\begin{subfigure}{0.4\textwidth}
\centering
\begin{tikzpicture}
\braid[number of strands=6, height=0.40cm, width=0.55cm,strand 1/.style={red},
style strands={1,2,3,4,5}{cyan},
style strands={6}{orange},
] (braid)a_5 a_4 a_3 a_2 a_1 a_1 a_2 a_3 a_4 a_5;
\end{tikzpicture}
\caption{$(\s_5\s_4\s_3\s_2\s_1)(\s_1\s_2\s_3\s_4\s_5)$}
  \end{subfigure}%
 \begin{subfigure}{0.55\textwidth} 
        \centering
        \includegraphics[width=\textwidth]{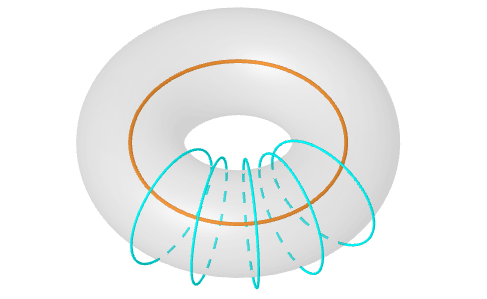}
        \caption{$\mathrm K(5)$}
        \label{fig:sousfig5}
    \end{subfigure}
 \caption{The braid $(\s_5\s_4\s_3\s_2\s_1)(\s_1\s_2\s_3\s_4\s_5)$ and the key-chain link $\mathrm{K}(5)$}
 \label{KeyChainExamples}
\end{figure}

\subsection{Link groups of torus necklaces}
This section is devoted to provide a convenient group presentation for the link group of a given torus necklace where generators are meridian. This is achieved by computing the Artin representation of $\textbf{b}_{n,m}$ and applying Theorem \ref{LinkGroup}.

\begin{definition}
Let $n,m\in \N^*$. Define the group $\mathcal L_*^*(n,m)$ (respectively $\mathcal L_*(n,m)$, $\mathcal L^*(n,m)$ and $\mathcal L(n,m)$) to be the link group of $\mathrm L_*^*(n,m)$ (respectively $\mathrm L_*(n,m)$, $\mathrm L^*(n,m)$ and $\mathrm L(n,m)$). We call such groups \textbf{torus necklace groups}.
\end{definition}

Recall that the links $\mathrm L(n,m)$ and $\mathrm L(m,n)$ are isotopic (see e.g \cite[Proposition 7.2.3 and Theorem 7.2.4]{Book Murasugi}). In \cite{Budney}, Budney extends this correspondence and completely determines when two torus necklaces are isotopic. In particular, he shows the following:
\begin{proposition}[{\cite[Proposition 3.5]{Budney}}]\label{IsotopicLinks}
Let $n,m\in\N^*$. The following hold:\\
(i) The links $\mathrm L_*^*(n,m)$ and $\mathrm L_*^*(m,n)$ are isotopic.\\
(ii) The links $\mathrm L_*(n,m)$ and $\mathrm L^*(m,n)$ are isotopic.\\
(iii) The links $\mathrm L(n,m)$ and $\mathrm L(m,n)$ are isotopic.\\
In particular, the corresponding link groups are isomorphic.
\end{proposition}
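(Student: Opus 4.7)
The plan is to exhibit an explicit orientation-preserving self-homeomorphism of $S^3$ that maps each torus necklace on the left to the corresponding one on the right, by exploiting the genus-one Heegaard decomposition of $S^3$.

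First I would set up the standard picture: write $S^3=V_1\cup_{T^2} V_2$ as the union of two solid tori glued along their boundary torus $T^2$, so that the $(n,m)$-torus link $\mathrm L(n,m)$ is realised as a collection of parallel simple closed curves on $T^2$ representing the homology class $n\mu+m\lambda$, where $\mu$ bounds a meridian disk in $V_1$ and $\lambda$ bounds a meridian disk in $V_2$. The core $c_1$ of $V_1$ is the ``internal core'' and the core $c_2$ of $V_2$ is the ``external core.'' Under this description, $\mathrm L_*^*(n,m)$ is the disjoint union of the torus link with $c_1$ and $c_2$, $\mathrm L_*(n,m)$ adds only $c_1$, and $\mathrm L^*(n,m)$ adds only $c_2$.

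Next I would invoke the well-known involution $\iota\colon S^3\to S^3$ coming from this Heegaard splitting which swaps $V_1$ and $V_2$ (concretely, viewing $S^3\subset\C^2$ as $\{(z,w)\,:\,|z|^2+|w|^2=1\}$, the map $(z,w)\mapsto(w,z)$ is an orientation-preserving homeomorphism interchanging the two solid tori $\{|z|\leq|w|\}$ and $\{|w|\leq|z|\}$). By construction $\iota$ preserves $T^2$ setwise and swaps $\mu$ with $\lambda$, hence sends a curve in class $n\mu+m\lambda$ to one in class $n\lambda+m\mu=m\mu+n\lambda$. Moreover $\iota(c_1)=c_2$ and $\iota(c_2)=c_1$.

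Combining these observations yields all three items at once: $\iota$ sends $\mathrm L(n,m)$ to $\mathrm L(m,n)$, which gives (iii); it sends the pair $(c_1,c_2)$ to $(c_2,c_1)$, so $\mathrm L_*^*(n,m)$ is sent to $\mathrm L_*^*(m,n)$, giving (i); and it sends $c_1$ to $c_2$, which together with the previous bullet gives $\iota(\mathrm L_*(n,m))=\mathrm L^*(m,n)$, proving (ii). The isotopy statement follows because $\iota$ is orientation-preserving, hence isotopic to the identity in $\mathrm{Homeo}^+(S^3)$, and the induced isomorphism of link groups is then immediate from functoriality of $\pi_1$. The only genuinely delicate point is checking orientations of the components so that the isotopy is an isotopy of oriented links as used in the rest of the paper; this is routine since $\iota$ is orientation-preserving on $S^3$ and the orientations of the cores and torus link components are intrinsic once a generator of $H_1(T^2)$ is chosen.
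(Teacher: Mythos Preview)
Your argument is correct and is essentially the standard geometric proof of this fact. Note, however, that the paper does not actually prove Proposition~\ref{IsotopicLinks}: it is stated with attribution to \cite[Proposition 3.5]{Budney} and used as a black box, so there is no ``paper's own proof'' to compare against.

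A few minor remarks on your write-up. First, the identification of the paper's braid-closure definition of $\mathrm L_*^*(n,m)$, $\mathrm L_*(n,m)$, $\mathrm L^*(n,m)$ with the Heegaard-splitting description (torus link on $T^2$ together with one or both cores) is implicit in your argument; it is standard, but if you were writing this out in full you would want to say one sentence about why the extra strands in $\mathbf b_{n,m}$ close up to the two core circles. Second, the step ``$\iota$ is orientation-preserving, hence isotopic to the identity in $\mathrm{Homeo}^+(S^3)$'' invokes the path-connectedness of $\mathrm{Homeo}^+(S^3)$; this is classical (and for the specific linear involution $(z,w)\mapsto(w,z)$ one can of course write down an explicit path in $SO(4)$), but it is worth flagging as a nontrivial input. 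Neither of these is a gap---your sketch would expand to a complete proof without difficulty.
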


The family of torus necklace groups has been studied at various points in the literature. For example, it is well known (e.g \cite[Proposition 3.38]{BZH}) that if $n$ and $m$ are coprime, the torus knot group $\mathcal L(n,m)$ admits the group presentation \begin{equation}\label{PresTorusKnot}
    \langle x,y\,|\, x^n=y^m\rangle.
\end{equation}
More generally, for any $n,m\in\N^*$ with $d=n\wedge m$, $n=dn'$, $m=dn'$ and $r,s\in\Z$ such that $nr+ms=1$, \cite[Lemma 2.2]{RZ} shows that the torus link group $\mathcal L(n,m)$ admits the group presentation 
\begin{equation}\label{PresTorusLinkRZ}
    \langle x,y,m_1,\dots,m_d\,|\,x^{n'}=y^{m'},\, m_1\cdots m_d=x^sy^r,\, m_ix^{n'}=x^{n'}m_i \, \forall i=1,\dots,d\rangle.
\end{equation}

Moreover, Ben Aribi has shown during his thesis (see \cite[Theorem 4.9 and Lemma A.2]{AribiThesis}) that for $n$ and $m$ coprime the torus necklace group $\mathcal L_*^*(n,m)$ (respectively $\mathcal L_*(n,m)\cong \mathcal L^*(m,n)$) admits the group presentation
\begin{equation}\label{PresAribiNecklace1}
    \langle x,y,z,t\,|\,x^nz^m=y^mt^n,\, xz=zx,\, yt=ty\rangle
\end{equation}
\begin{equation}\label{PresAribiNecklace2}
    (\text{respectively } \langle x,y,t\,|\, x^n=y^mt^n, \, yt=ty\rangle).
\end{equation}
It is also known that if $n$ and $m$ are coprime, the torus knot group $\mathcal L(n,m)$ is isomorphic to $G(n,m)$ (see for example \cite[Lemma 3.1, Proposition 4.12 and Remark 4.13]{Gobet Garside} for a proof).
Finally, when $n$ divides $m$ Picantin has shown (see \cite{PicantinTorus}) that $\mathcal L(n,m)\cong G(n,m)$. We shall see that this result holds in general (see Theorem \ref{NecklaceGroupsAreJBraidgroups} and Remark \ref{CircularIso}).\\

The main result of this section is the following:

\begin{theorem}\label{NecklaceGroupsAreJBraidgroups}
Let $n,m\in \N^*$ be coprime integers. We have the following:\\
(i) The group $\mathcal L_*^*(n,m)$ is isomorphic to $\B_*^*(n,m)$.\\
(ii) If $n\geq 2$, the group $\mathcal L_*(n,m)$ is isomorphic to $\B_*(n,m)$.\\
(iii) If $m\geq 2$, the group $\mathcal L^*(n,m)$ is isomorphic to $\B^*(n,m)$.\\
(iv) If $n,m\geq 2$, the group $\mathcal L(n,m)$ is isomorphic to $\B(n,m)$.\\
Moreover, under this identification, meridians of the link groups correspond to braid reflections of the $J$-braid groups.
\end{theorem}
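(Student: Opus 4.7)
The plan is to apply Artin's theorem (Theorem \ref{LinkGroup}) directly to the braid $\textbf{b}_{n,m}\in B_{n+2}$ and identify the resulting presentation of $\mathcal{L}_*^*(n,m)$ with \eqref{BraidPres}. Let $x_1,\dots,x_{n+2}$ be the free-group basis provided by Theorem \ref{LinkGroup}, and relabel $y:=x_{n+1}$ and $z:=x_{n+2}$ to match the generator names of \eqref{BraidPres}; by construction these are meridians of the $n+2$ strands of the closure. The goal is to show that the $n+2$ relations $x_i=\rho_{\textbf{b}_{n,m}}(x_i)$ can be rewritten, if necessary after a change of meridian basis allowed by Remark \ref{AutoFree}, as the relations \eqref{BraidPresDef:1}--\eqref{BraidPresDef:3} defining $\B_*^*(n,m)$. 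The meridian/braid-reflection correspondence stated at the end of the theorem is then automatic, since the generators of \eqref{LinkPres} are meridians by Theorem \ref{LinkGroup} and those of \eqref{BraidPres} are braid reflections by definition.

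I would analyse the braid $\textbf{b}_{n,m}=ABC$ factor by factor, where $A=\s_{n+1}\cdots\s_1$, $B=\s_1\cdots\s_{n+1}$ and $C=(\s_1\cdots\s_{n-1}\s_n^2)^m$. A direct permutation computation shows that $AB$ is pure and that the permutation of $C$ is the $m$-th power of the cycle $(1,2,\dots,n)$, so using $n\wedge m=1$ the cycle partition of $\pi(\textbf{b}_{n,m})$ is $\{\{1,\dots,n\},\{n+1\},\{n+2\}\}$, giving three link components. Geometrically $AB$ loops strand $n+2$ once around the bundle of strands $1,\dots,n+1$; its Artin action should send $z$ to a conjugate of $z$ by a word equal to $x_1\cdots x_n y$, which, combined with the invariance of $x_1\cdots x_n y z$ under every $\s_i$ (Remark \ref{StableProduct}), produces the commutativity relation \eqref{BraidPresDef:1}: $x_1\cdots x_n y z=z x_1\cdots x_n y$. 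The remaining relations contributed by $AB$ alone should be trivial.

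For the factor $C$, Remark \ref{StableProduct} does the heavy lifting once more: the element $\delta=x_1\cdots x_n y$ is fixed by $\s_1\cdots\s_{n-1}\s_n^2$, and sub-products of consecutive $x_i$'s are carried to rotated sub-products by iterated applications of $\s_1\cdots\s_{n-1}$. The pure sub-factor $\s_n^2$ contributes an additional insertion, equivalent to a conjugation involving $y$, at each iteration. Writing $m=qn+r$ with $0\le r\le n-1$, each complete block of $n$ applications returns the permutation on $\{1,\dots,n\}$ to the identity and accumulates one copy of $\delta$; the tail of $r$ applications then yields the prefixes $x_{i+1}\cdots x_n yz\,\delta^{q-1}$ appearing in \eqref{BraidPresDef:2} when $1\le i\le n-r$, respectively $x_{i+1}\cdots x_n yz\,\delta^q$ in \eqref{BraidPresDef:3} when $n-r+1\le i\le n$. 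The case split on $i$ records precisely whether the cyclic shift by $r$ wraps around the index $n$, and the factor $yz$ appearing in both prefixes arises from combining the $\s_n^2$ contribution with the $z$-conjugation coming from $AB$.

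Parts (ii)--(iv) then follow from Proposition \ref{LinkQuotient}: with the cycle partition computed above, removing strand $n+2$ (resp.\ $n+1$, resp.\ both) corresponds on link groups to quotienting by $\llangle z\rrangle$ (resp.\ $\llangle y\rrangle$, resp.\ $\llangle y,z\rrangle$), which by Definition \ref{DefBraid} is exactly the passage from $\B_*^*(n,m)$ to $\B_*(n,m)$, $\B^*(n,m)$ and $\B(n,m)$ respectively; the extra hypotheses $n\ge 2$ or $m\ge 2$ are needed only so that these quotients are themselves well-defined $J$-braid groups. The hardest part will be the bookkeeping in the third paragraph: one has to track the Artin orbit of each $x_i$ through $(\s_1\cdots\s_{n-1}\s_n^2)^m$, combine it with the contribution of the prefix $AB$, and massage the resulting long words, possibly via an explicit change of meridian basis (Remark \ref{AutoFree}), so that the $n+2$ Artin relations coincide with \eqref{BraidPresDef:1}--\eqref{BraidPresDef:3} verbatim rather than only up to manifest rewrites.
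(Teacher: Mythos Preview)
Your approach is essentially the paper's: compute $\rho_{\textbf{b}_{n,m}}$ via the factorisation $\textbf{b}_{n,m}=V_n\beta_n^m$ (your $AB$ and $C$), apply Theorem~\ref{LinkGroup}, then rewrite and descend via Proposition~\ref{LinkQuotient}. Two points where the paper is sharper than your sketch. First, the raw Artin relations $x_i=\rho_{\textbf{b}_{n,m}}(x_i)$ do \emph{not} come out in the balanced form \eqref{BraidPresDef:2}--\eqref{BraidPresDef:3}; they read as $x_i\cdot\delta x_{n+2}\delta^{q(\pm)}=\delta x_{n+2}\delta^{q(\pm)}\cdot x_{j}$ (Presentation~\eqref{NMPres1}). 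The specific change of basis you allude to is $\rho_{\Delta_n}$, the positive half-twist on the first $n$ strands (Lemma~\ref{ArtinDelta}); applying it replaces each $x_i$ by ${}^{(x_1\cdots x_{n-i})}x_{n-i+1}$ and turns suffixes $x_i\cdots x_n$ into prefixes $x_1\cdots x_{n-i+1}$, after which a short manipulation (Equation~\eqref{ConjTodelta}) produces exactly the two-sided relations of \eqref{BraidPres}. Second, your claim that ``the remaining relations contributed by $AB$ alone should be trivial'' is not quite right: $V_n$ conjugates every $x_i$ ($i\le n+1$) by ${}^{(\delta)}z$, and this is precisely how the letter $z$ enters relations \eqref{BraidPresDef:2}--\eqref{BraidPresDef:3}. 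You get $n+2$ Artin relations but only $n+1$ defining relations in \eqref{BraidPres}; the paper checks that the relation coming from $x_{n+1}=y$ is a consequence of the others.
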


We will split the computation of the Artin representation of $\mathbf{b}_{n,m}$ in several Lemmas, and we encompass all the braids whose Artin representation will be computed in the following definition:
\begin{definition}\label{DefBraidsProofs}
Let $n\in\N^*$. Define the following:\\
(i) For $1\leq s\leq t\leq n-1$, define $U_{s,t}$ to be $\s_s\s_{s+1}\cdots\s_t\in B_n$.\\
(ii) For $1\leq s\leq t\leq n-1$, define $D_{s,t}$ to be $\s_t\s_{t-1}\cdots\s_t \in B_n$.\\ 
(iii) Define $V_n$ to be $D_{1,n+1}U_{1,n+1}\in B_{n+2}$.\\
(iv) Define $\beta_n$ to be $U_{1,n}\s_n\in B_{n+1}$.\\
(v) Define $\Delta_n$ to be $U_{1,n-1}U_{1,n-2}\cdots U_{1,1}\in B_n$.
\end{definition}

\begin{remark}
For all $n,m\in \N^*$, we have $\mathbf{b}_{n,m}=V_n\beta_n^m$.
\end{remark}

\begin{notation}
Given a group $G$ and $g,h\in G$, write $^{(h)}{g}$ for $hgh^{-1}$. 
\end{notation}

\begin{lemma}\label{ComputeArtin1}
Let $s,t\in [n-1]$ and assume that $s\leq t$. Recall from Definition \ref{DefBraidsProofs} that $U_{s,t}$ is the element $\s_s\s_{s+1}\cdots \s_t$. We have
\begin{equation}
    \begin{aligned}
     \rho_{U_{s,t}}(x_i)=\begin{cases}
        x_i \text{  $\mathrm{if}$ $i\notin \llbracket s,t+1\rrbracket$,}\\
        (x_s\cdots x_t)x_{t+1}(x_s\cdots x_t)^{-1} \, \, \text{$\mathrm{if}$ $i=s$,}\\
        x_{i-1} \text{  $\mathrm{if}$ $i\in \llbracket s+1,t+1\rrbracket$.}
     \end{cases}
    \end{aligned}
\end{equation}
\end{lemma}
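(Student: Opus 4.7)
The plan is to proceed by induction on the length $\ell := t-s \geq 0$ of the word $U_{s,t}$, exploiting the factorisation $U_{s,t} = U_{s,t-1}\s_t$ together with the fact that the Artin representation is a \emph{right} action on $F_n$, so that $\rho_{U_{s,t}}(x_i) = \rho_{\s_t}\bigl(\rho_{U_{s,t-1}}(x_i)\bigr)$ for every $i$.

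For the base case $\ell = 0$ one has $U_{s,s} = \s_s$, and the stated formula is simply a rewrite of the defining equations \eqref{ArtinEq} of the Artin action: $x_i$ is fixed for $i \notin \{s,s+1\}$, $\rho_{\s_s}(x_s) = x_s x_{s+1} x_s^{-1}$ (which matches the formula specialised at $t=s$, since the interval $\llbracket s+1, s+1\rrbracket$ is then $\{s+1\}$), and $\rho_{\s_s}(x_{s+1}) = x_s$.

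For the inductive step, assume the formula is known for $U_{s,t-1}$ and examine how each of its cases is transformed by $\rho_{\s_t}$. If $i \notin \llbracket s, t+1\rrbracket$, then $i \notin \llbracket s, t\rrbracket$ and also $i \notin \{t,t+1\}$, so both $U_{s,t-1}$ and $\s_t$ fix $x_i$. If $i \in \llbracket s+1, t\rrbracket$, the inductive hypothesis gives $\rho_{U_{s,t-1}}(x_i) = x_{i-1}$; since $i-1 < t$, this letter is also fixed by $\s_t$. The new boundary index $i = t+1$ satisfies $t+1 \notin \llbracket s,t\rrbracket$, so $U_{s,t-1}$ fixes $x_{t+1}$, and then $\s_t$ sends it to $x_t = x_{i-1}$, as required.

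The only substantive case is $i = s$: induction gives $\rho_{U_{s,t-1}}(x_s) = (x_s\cdots x_{t-1}) x_t (x_s\cdots x_{t-1})^{-1}$, where every factor $x_s, \ldots, x_{t-1}$ has index strictly less than $t$ and is therefore fixed by $\s_t$, while the middle letter $x_t$ is sent to $x_t x_{t+1} x_t^{-1}$. Substituting and telescoping the conjugating word, one obtains $(x_s\cdots x_{t-1}) x_t x_{t+1} x_t^{-1} (x_s\cdots x_{t-1})^{-1} = (x_s\cdots x_t) x_{t+1} (x_s\cdots x_t)^{-1}$, which is precisely the desired value. There is no genuine obstacle in this lemma beyond careful index bookkeeping; the clean extension of the conjugating prefix from $x_s\cdots x_{t-1}$ to $x_s\cdots x_t$ in the $i=s$ case is the only combinatorial point worth noting.
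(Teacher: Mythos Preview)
Your proof is correct and follows essentially the same approach as the paper: both argue directly from the defining action \eqref{ArtinEq}, with the paper unrolling the computation generator by generator (using a ``$\vdots$'' for the $i=s$ case) while you package the same iteration as a formal induction on $t-s$. The content is identical; only the presentation differs.
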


\begin{proof}
 If $i\notin \llbracket s,t+1\rrbracket$, the action of $U_{s,t}$ on $x_i$ is trivial. Let $i\in \llbracket s+1,t+1\rrbracket$. We have
\begin{equation*}
     \begin{aligned}
      \rho_{U_{s,t}}(x_i)&=x_i\bigcdot(\s_{i-1}\cdots \s_t)\\
      &=x_{i-1}\bigcdot (\s_{i}\cdots \s_t)\\
      &=x_{i-1}.
     \end{aligned}
\end{equation*}
Moreover, we have
\begin{equation*}
    \begin{aligned}
     \rho_{U_{s,t}}(x_s)=&(x_sx_{s+1}x_s^{-1})\bigcdot(\s_{s+1}\cdots \s_t)\\
     &\vdots\\
     =&(x_s\cdots x_t)x_{t+1}(x_s\cdots x_t)^{-1}.
    \end{aligned}
\end{equation*}
This concludes the proof.
\end{proof}

\begin{lemma}\label{ComputeArtin2}
Let $s,t\in [n-1]$ and assume that $s\leq t$. Recall from Definition \ref{DefBraidsProofs} that $D_{s,t}$ is the element $\s_t\s_{t-1}\cdots \s_s$. We have
\begin{equation}
    \begin{aligned}
     \rho_{D_{s,t}}(x_i)=\begin{cases}
        x_i \, \, \text{$\mathrm{if}$ $i\notin \llbracket s,t+1\rrbracket$,}\\
        x_sx_{i+1}x_s^{-1} \, \, \text{$\mathrm{if}$ $i\in \llbracket s,t\rrbracket$,}\\
        x_s\, \, \text{$\mathrm{if}$ $i=t+1$}.\\
     \end{cases}
    \end{aligned}
\end{equation}
In particular, we have $\rho_{D_{s,t}}(x_s\cdots x_{t+1})=x_s\cdots x_{t+1}$.
\end{lemma}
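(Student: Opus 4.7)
The plan is to perform a direct case-by-case computation that mirrors the argument used for Lemma \ref{ComputeArtin1}, applying the Artin action formulas from Definition \ref{ArtinAct} one factor at a time. Concretely, I would partition the analysis according to where $i$ falls relative to $[s,t+1]$ and then unfold $D_{s,t}=\sigma_t\sigma_{t-1}\cdots\sigma_s$ read left to right (in the right action).

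First, if $i\notin\llbracket s,t+1\rrbracket$ then for every $j\in\llbracket s,t\rrbracket$ we have $i\notin\{j,j+1\}$, so each $\sigma_j$ fixes $x_i$ and $\rho_{D_{s,t}}(x_i)=x_i$. Next, for $i=t+1$ I would unfold from left to right: $x_{t+1}\cdot\sigma_t=x_t$, then $x_t\cdot\sigma_{t-1}=x_{t-1}$, and inductively $x_{j+1}\cdot\sigma_j=x_j$ for decreasing $j$ from $t$ down to $s$; the telescope produces $x_s$, matching the claim. For $i\in\llbracket s,t\rrbracket$, the initial letters $\sigma_t,\sigma_{t-1},\ldots,\sigma_{i+1}$ all fix $x_i$ because $i<j$ for each such $j$; then $\sigma_i$ acts as $x_i\mapsto x_ix_{i+1}x_i^{-1}$. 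The remaining letters $\sigma_{i-1},\ldots,\sigma_s$ leave $x_{i+1}$ untouched (since $i+1>j+1$ for all such $j$), while they turn the occurrences of $x_i$ into $x_{i-1}$, then $x_{i-2}$, etc., by repeated application of $x_{j+1}\cdot\sigma_j=x_j$. The outcome is $x_sx_{i+1}x_s^{-1}$, as claimed.

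For the ``in particular'' statement, I would simply multiply the images:
\begin{equation*}
\rho_{D_{s,t}}(x_s\cdots x_{t+1})=\Bigl(\prod_{i=s}^{t}x_sx_{i+1}x_s^{-1}\Bigr)\cdot x_s=x_s(x_{s+1}x_{s+2}\cdots x_{t+1})x_s^{-1}\cdot x_s=x_sx_{s+1}\cdots x_{t+1},
\end{equation*}
where the internal $x_s^{-1}x_s$ telescopes.

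There is no genuine obstacle here; the argument is essentially dual to the one already given for $U_{s,t}$, with the roles of increasing and decreasing indices swapped. The only point requiring a little care is the bookkeeping: making sure that each generator $\sigma_j$ acts on the correct current word (rather than the original $x_i$) and checking that $x_{i+1}$ survives unchanged through all the letters $\sigma_{i-1},\ldots,\sigma_s$, which is why the outer conjugation by $x_s$ (and not by some other $x_k$) is what appears in the final formula.
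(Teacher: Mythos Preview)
Your proof is correct and follows essentially the same approach as the paper: a direct case-by-case unwinding of the Artin action along the factors of $D_{s,t}$, with the same observation that the leading letters $\sigma_t,\ldots,\sigma_{i+1}$ fix $x_i$ and the trailing ones telescope the conjugator down to $x_s$. You are slightly more explicit than the paper in verifying that $x_{i+1}$ survives through $\sigma_{i-1},\ldots,\sigma_s$ and in spelling out the telescoping product for the ``in particular'' clause, but the method is identical.
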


\begin{proof}
 If $i\notin \llbracket s,t+1\rrbracket$, the action of $D_{s,t}$ on $x_i$ is trivial. Let $i\in \llbracket s,t\rrbracket$. We have
\begin{equation*}
     \begin{aligned}
      \rho_{D_{s,t}}(x_i)&=x_i\bigcdot(\s_i\cdots \s_s)\\
      &=(x_ix_{i+1}x_i^{-1})\bigcdot(\s_{i-1}\cdots \s_s)\\
      &=x_sx_{i+1}x_s^{-1}.
     \end{aligned}
\end{equation*}
Finally, we have $\rho_{D_{s,t}}(x_{t+1})=\rho_{D_{s,t-1}}(x_t)=\cdots=x_s$, which concludes the proof.
\end{proof}

\begin{lemma}\label{ArtinToric1}
Let $n\in \N^*$. Recall from Definition \ref{DefBraidsProofs} that the element $V_n$ is $D_{1,n+1}U_{1,n+1}$. We have

\begin{equation}
    \begin{aligned}
     \rho_{V_n}(x_i)=\begin{cases} 
     {}^{({}^{(x_1\cdots x_{n+1})}x_{n+2})}x_i \text{  $\mathrm{if}$ $i\in [n+1]$,}\\
     {}^{(x_1\cdots x_{n+1})}x_{n+2} \text{  $\mathrm{if}$ $i=n+2$.}
     \end{cases}
    \end{aligned}
\end{equation}

\end{lemma}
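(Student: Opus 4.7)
Since the Artin representation of Definition~\ref{ArtinAct} is a \emph{right} action, the composition rule reads $\rho_{\alpha\beta}=\rho_\beta\circ\rho_\alpha$; applied to $V_n=D_{1,n+1}U_{1,n+1}$, this gives $\rho_{V_n}=\rho_{U_{1,n+1}}\circ\rho_{D_{1,n+1}}$. The plan is therefore to apply $D_{1,n+1}$ first using Lemma~\ref{ComputeArtin2}, then apply $U_{1,n+1}$ using Lemma~\ref{ComputeArtin1}, and to collect terms. Both lemmas already provide the raw data, so the whole argument reduces to a single functorial composition.

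For the top index $i=n+2$, I would first read off $\rho_{D_{1,n+1}}(x_{n+2})=x_1$ from Lemma~\ref{ComputeArtin2} with $s=1$, $t=n+1$. Then Lemma~\ref{ComputeArtin1} with the same $s,t$ gives $\rho_{U_{1,n+1}}(x_1)=(x_1\cdots x_{n+1})x_{n+2}(x_1\cdots x_{n+1})^{-1}={}^{(x_1\cdots x_{n+1})}x_{n+2}$, which is exactly the claimed formula for $i=n+2$.

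For $i\in[n+1]$, Lemma~\ref{ComputeArtin2} gives $\rho_{D_{1,n+1}}(x_i)=x_1\, x_{i+1}\, x_1^{-1}$. Since $\rho_{U_{1,n+1}}$ is an automorphism of $F_{n+2}$, I would apply it letter by letter: Lemma~\ref{ComputeArtin1} yields $\rho_{U_{1,n+1}}(x_1)={}^{(x_1\cdots x_{n+1})}x_{n+2}$ and, as $i+1\in[2,n+2]$, it yields $\rho_{U_{1,n+1}}(x_{i+1})=x_i$. Conjugating and simplifying produces ${}^{({}^{(x_1\cdots x_{n+1})}x_{n+2})}x_i$, which matches the desired expression.

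There is no serious obstacle here; the proof is bookkeeping once the right-action convention is acknowledged. The one point where care is genuinely needed is the reversal of composition order, which forces $D_{1,n+1}$ to act before $U_{1,n+1}$ even though it appears first in the product $V_n=D_{1,n+1}U_{1,n+1}$; getting this wrong would swap the roles of inner and outer conjugation and produce a different (incorrect) formula.
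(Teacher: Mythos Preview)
Your proof is correct and follows essentially the same approach as the paper: both compute $\rho_{V_n}=\rho_{U_{1,n+1}}\circ\rho_{D_{1,n+1}}$, apply Lemma~\ref{ComputeArtin2} first and Lemma~\ref{ComputeArtin1} second, and read off the conjugation formula. Your explicit remark on the right-action convention is a helpful addition but does not change the underlying argument.
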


\begin{proof}
We have

\begin{equation}
    \begin{aligned}
     \rho_{V_n}(x_i)&=\rho_{U_{1,n+1}}\circ\rho_{D_{1,n+1}}(x_i)\\
     &=\begin{cases} 
     \rho_{U_{1,n+1}}({}^{(x_1)}x_{i+1}) \text{  $\mathrm{if}$ $i\in [n+1]$,}\\
     \rho_{U_{1,n+1}}(x_1) \text{  $\mathrm{if}$ $i=n+2$.}
     \end{cases}\text{ By Lemma \ref{ComputeArtin2},}\\
     &=\begin{cases} 
     {}^{({}^{(x_1\cdots x_{n+1})}x_{n+2})}x_i \text{  $\mathrm{if}$ $i\in [n+1]$,}\\
     {}^{(x_1\cdots x_{n+1})}x_{n+2} \text{  $\mathrm{if}$ $i=n+2$.}
     \end{cases}\text{ By Lemma \ref{ComputeArtin1}.}
    \end{aligned} 
\end{equation}

\end{proof}

\begin{lemma}\label{ArtinToric2}
Let $n\in \N^*$. Recall from Definition \ref{DefBraidsProofs} that $\beta_n$ is the element $U_{1,n}\s_n$. For all $q\in \N^*$ and for all $1\leq r\leq n-1$, we have
\begin{equation}
    \begin{aligned}
     \rho_{\beta_n^{qn+r}}(x_i)=\begin{cases} 
     {}^{((x_1\cdots x_{n+1})^{q+1})}x_{n-r+i} \text{  $\mathrm{if}$ $1\leq i\leq r$,}\\
     {}^{((x_1\cdots x_{n+1})^q)}x_{i-r} \text{  $\mathrm{if}$ $r+1\leq i \leq n$,}\\
     {}^{((x_1\cdots x_{n+1})^qx_{n-r+1}\cdots x_n)}x_{n+1} \text{  $\mathrm{if}$ $i=n+1$,}\\
     x_{n+2} \text{  $\mathrm{if}$ $i=n+2$.}
     \end{cases}
    \end{aligned}
\end{equation}
\end{lemma}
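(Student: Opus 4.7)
The plan is to prove the formula by induction on the exponent $k=qn+r$, reading it with the convention that an empty product $x_{n-r+1}\cdots x_n$ (occurring at $r=0$) equals $1$, so that the statement also makes sense at $r=0$; the lemma as stated is the special case $q\ge 1$, $1\le r\le n-1$. Throughout I write $\Pi:=x_1\cdots x_{n+1}$ and, for brevity, let $F_{q,r}$ denote the claimed right-hand side.

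First I would compute $\rho_{\beta_n}$ explicitly. Since the Artin action is on the right, $\rho_{\beta_n}=\rho_{\sigma_n}\circ\rho_{U_{1,n}}$, so Lemma \ref{ComputeArtin1} with $s=1,t=n$ followed by the elementary action of $\sigma_n$ gives
\[\rho_{\beta_n}(x_1)=\Pi x_n\Pi^{-1},\ \rho_{\beta_n}(x_i)=x_{i-1}\text{ for }2\le i\le n,\ \rho_{\beta_n}(x_{n+1})=x_nx_{n+1}x_n^{-1},\ \rho_{\beta_n}(x_{n+2})=x_{n+2}.\]
Second I would verify the crucial fixed-point identity $\rho_{\beta_n}(\Pi)=\Pi$, which follows by a one-line telescoping:
\[\rho_{\beta_n}(\Pi)=\Pi x_n\Pi^{-1}\cdot (x_1\cdots x_{n-1})\cdot x_nx_{n+1}x_n^{-1}=\Pi x_n\Pi^{-1}\cdot \Pi\cdot x_n^{-1}=\Pi.\]
This identity is what makes the induction tractable, since it lets me pull conjugations by powers of $\Pi$ freely through $\rho_{\beta_n}$.

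With these preliminaries in hand the induction is essentially automatic. The base case $k=1$ is the explicit formula above (it matches $F_{0,1}$). For the inductive step, I would write $\rho_{\beta_n^{k+1}}(x_i)=\rho_{\beta_n}(F_{q,r}(x_i))$ and go through the four cases A: $1\le i\le r$, B: $r+1\le i\le n$, C: $i=n+1$, D: $i=n+2$. In A, B and D the expression has the shape $\Pi^a x_j\Pi^{-a}$; applying $\rho_{\beta_n}$ and using the fixed-point property reduces to shifting indices $x_j\mapsto x_{j-1}$, except for the boundary value $j=1$ (occurring exactly at $i=r+1$ in Case B), where the extra factor $\Pi$ produced by $\rho_{\beta_n}(x_1)$ is precisely what is needed to promote the Case B entry into the Case A entry for $(q,r+1)$. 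In Case C, the product $x_{n-r+1}\cdots x_n$ maps entrywise to $x_{n-r}\cdots x_{n-1}$ (this is where the hypothesis $r\le n-1$ is used, ensuring $n-r+1\ge 2$), after which the telescoping
\[(x_{n-r}\cdots x_{n-1})(x_nx_{n+1}x_n^{-1})(x_{n-r}\cdots x_{n-1})^{-1}=(x_{n-r}\cdots x_n)x_{n+1}(x_{n-r}\cdots x_n)^{-1}\]
produces exactly the Case C entry at $(q,r+1)$.

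The only delicate point, and the one I would check most carefully, is the transition from $(q,n-1)$ to the next layer $(q+1,0)$: at $r=n-1$ the Case C conjugator becomes $x_1\cdots x_n=\Pi x_{n+1}^{-1}$, and one must verify that the resulting expression collapses to $\Pi^{q+1}x_{n+1}\Pi^{-(q+1)}$, which is the $F_{q+1,0}$ value at $i=n+1$. A further application of $\rho_{\beta_n}$ to $F_{q+1,0}$ then produces $F_{q+1,1}$ by the mechanism described above, closing the induction and yielding the lemma. Everything else is mechanical bookkeeping driven by the two rules $\rho_{\beta_n}(\Pi)=\Pi$ and the explicit formula for $\rho_{\beta_n}$ from the preliminary step.
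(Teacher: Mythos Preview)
Your proposal is correct and follows essentially the same approach as the paper: both compute $\rho_{\beta_n}$ explicitly, use that $\Pi=x_1\cdots x_{n+1}$ is fixed by $\beta_n$, and proceed by induction. The only organizational difference is that the paper first establishes the formula for $\rho_{\beta_n^r}$ with $1\le r\le n$ by a bounded induction, then observes that $\rho_{\beta_n^n}$ is simply conjugation by $\Pi$ and composes $\rho_{\beta_n^{qn+r}}=\rho_{\beta_n^r}\circ\rho_{\beta_n^{qn}}$, thereby sidestepping the explicit wrap-around check from $(q,n-1)$ to $(q+1,0)$ that your single induction on $k$ requires.
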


\begin{proof}
We have 
\begin{equation}
    \begin{aligned}
    \rho_{\beta_n}(x_i)&=\rho_{\s_n}\circ \rho_{U_{1,n}}(x_i)\\
    &=\begin{cases}
       \rho_{\s_n}({}^{(x_1\cdots x_n)}x_{n+1}) \text{   $\mathrm{if}$ $i=1$,}\\ 
       \rho_{\s_n}(x_{i-1}) \text{   $\mathrm{if}$ $i\in \llbracket 2,n+1\rrbracket$,}\\
       \rho_{\s_n}(x_{n+2}) \text{  if $i=n+2$.}
    \end{cases}\text{ By Lemma \ref{ComputeArtin1},}\\
    &=\begin{cases}
         {}^{(x_1\cdots x_{n+1})}x_n \text{  if $i=1$,}\\
         x_{i-1} \text{  if $i\in \llbracket 2,n\rrbracket$,}\\
         {}^{(x_nx_{n+1})}x_{n+1} \text{  if $i=n+1$,}\\
         x_{n+2} \text{  if $i=n+2$.}
    \end{cases}
    \end{aligned}
\end{equation}
Recall by Remark \ref{StableProduct} that $x_1\dots x_{n+1}$ is fixed by $\beta_n$.
Thus, by induction, for all $1\leq r \leq n$ we have 
\begin{equation}\label{ArtinToric2Eq2}
\begin{aligned}
    \rho_{\beta_n^r}(x_i)=\begin{cases}
       {}^{(x_1\cdots x_{n+1})}x_{n-r+i} \text{   $\mathrm{if}$ $1\leq i\leq r$,}\\ 
       x_{i-r} \text{   $\mathrm{if}$ $r+1\leq i\leq n$,}\\
       {}^{(x_{n-r+1}\cdots x_{n+1})}x_{n+1} \text{  if $i=n+1$,}\\
       x_{n+2} \text{  if $i=n+2$.}
    \end{cases}
\end{aligned}
\end{equation}
In particular, Equation \eqref{ArtinToric2Eq2} with $r=n$ gives 
\begin{equation}\label{ArtinToric2Eq3}
\begin{aligned}
    \rho_{\beta_n^n}(x_i)=\begin{cases}
       {}^{(x_1\cdots x_{n+1})}x_{i} \text{   $\mathrm{if}$ $1\leq i\leq n$,}\\
       {}^{(x_1\cdots x_{n+1})}x_{n+1} \text{  if $i=n+1$,}\\
       x_{n+2} \text{  if $i=n+2$.}
    \end{cases}
\end{aligned}
\end{equation}
Again using that $x_1\cdots x_{n+1}$ is fixed by $\beta_n$, we get

\begin{equation}
\begin{aligned}
    \rho_{\beta_n^{qn+r}}(x_i)&=\rho_{\beta_n^r}\circ\rho_{\beta_n^{qn}}(x_i)\\
    &=\begin{cases}
       \rho_{\beta_n^r}({}^{((x_1\cdots x_{n+1})^{q})}x_{i}) \text{   $\mathrm{if}$ $1\leq i\leq n$,}\\
       \rho_{\beta_n^r}({}^{((x_1\cdots x_{n+1})^q)}x_{n+1}) \text{  if $i=n+1$,}\\
       \rho_{\beta_n^r}(x_{n+2}) \text{  if $i=n+2$.}
    \end{cases} \text{by Equation \eqref{ArtinToric2Eq3},}\\
    &=\begin{cases}
       {}^{((x_1\cdots x_{n+1})^{q+1})}x_{n-r+i} \text{   $\mathrm{if}$ $1\leq i\leq r$,}\\
       {}^{((x_1\cdots x_{n+1})^{q})}x_{i-r} \text{   $\mathrm{if}$ $r+1\leq i\leq n$,}\\
       {}^{((x_1\cdots x_{n+1})^q x_{n-r+1}\cdots x_{n+1})}x_{n+1} \text{  if $i=n+1$,}\\
       x_{n+2} \text{  if $i=n+2$.}
    \end{cases} \text{by Equation \eqref{ArtinToric2Eq2},}
\end{aligned}
\end{equation}
which concludes the proof.
\end{proof}

We get the following:
\begin{proposition}\label{ArtinToric}
Let $n,m\in \N^*$ and write $m=qn+r$ the euclidean division of $m$ by $n$. The action of $\textbf{b}_{n,m}$ on $\{x_1,\dots,x_{n+2}\}$ is given by 
\begin{equation}
    \begin{aligned}
     \rho_{\textbf b_{n,m}}(x_i)=\begin{cases} {}^{(x_1\cdots x_{n+2}(x_1\cdots x_{n+1})^{q})}x_{n-r+i}\text{  $\mathrm{if}$ $1\leq i\leq r$,}\\ 
     {}^{(x_1\cdots x_{n+2}(x_1\cdots x_{n+1})^{q-1})}x_{i-r}\text{  $\mathrm{if}$ $r+1\leq i\leq n$,}\\
     {}^{(x_1\cdots x_{n+2}(x_1\cdots x_{n+1})^{q-1}x_{n-r+1}\cdots x_n)}x_{n+1} \text{  $\mathrm{if}$ $i=n+1$,}\\
     {}^{(x_1\cdots x_{n+1})}x_{n+2} \text{  $\mathrm{if}$ $i=n+2$.}
     \end{cases}
    \end{aligned}
\end{equation}
\end{proposition}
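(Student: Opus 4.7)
The plan is to factor $\textbf{b}_{n,m} = V_n \beta_n^m$ (as observed just after Definition \ref{DefBraidsProofs}) and exploit the right-action convention of Definition \ref{ArtinAct}, which yields $\rho_{\textbf{b}_{n,m}} = \rho_{\beta_n^m} \circ \rho_{V_n}$. Concretely, I would first apply Lemma \ref{ArtinToric1} to evaluate $\rho_{V_n}(x_i)$, and then push forward by $\rho_{\beta_n^m}$ using Lemma \ref{ArtinToric2}.

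To keep the bookkeeping manageable, set $A := x_1 \cdots x_{n+1}$ and $B := x_{n+2}$, so that $x_1 \cdots x_{n+2} = AB$. Lemma \ref{ArtinToric1} then reads $\rho_{V_n}(x_i) = (ABA^{-1})\, x_i\, (AB^{-1}A^{-1})$ for $i \in [n+1]$ and $\rho_{V_n}(x_{n+2}) = ABA^{-1}$. The key simplification is that both $A$ and $B$ are fixed by $\beta_n$: $A$ by Remark \ref{StableProduct}, and $B = x_{n+2}$ because $\beta_n$ acts trivially on the $(n+2)$-nd strand. Since $\rho_{\beta_n^m}$ is a group automorphism, it preserves conjugation, so $\rho_{\beta_n^m}(ABA^{-1}) = ABA^{-1}$. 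This already yields the formula for $i = n+2$, and for $i \in [n+1]$ it reduces the computation to
\[
\rho_{\textbf{b}_{n,m}}(x_i) \;=\; (ABA^{-1})\,\rho_{\beta_n^m}(x_i)\,(AB^{-1}A^{-1}).
\]

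From here the argument is a case-by-case substitution of the three subcases of Lemma \ref{ArtinToric2} ($1 \le i \le r$, $r+1 \le i \le n$, $i = n+1$) into the display above. In each subcase the outer conjugation by $ABA^{-1}$ absorbs into the inner conjugation by an appropriate power of $A$, shifting its exponent down by one; for instance, for $1 \le i \le r$ one computes $ABA^{-1} \cdot A^{q+1} x_{n-r+i} A^{-(q+1)} \cdot AB^{-1}A^{-1} = (ABA^q)\, x_{n-r+i}\,(ABA^q)^{-1}$, which upon reinserting $AB = x_1 \cdots x_{n+2}$ gives ${}^{(x_1 \cdots x_{n+2}(x_1 \cdots x_{n+1})^q)} x_{n-r+i}$, exactly the first case of the proposition. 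The other two subcases are entirely analogous, with the factor $x_{n-r+1}\cdots x_n$ in the $i = n+1$ case being carried through unchanged.

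I do not anticipate any real obstacle beyond careful bookkeeping of these $A$-shifts and of the orderings inside the conjugators. One minor caveat: Lemma \ref{ArtinToric2} is stated only for $1 \le r \le n-1$, and Proposition \ref{ArtinToric} as written leaves no separate slot for $r = 0$; however, the proposition is applied in the sequel (Theorem \ref{NecklaceGroupsAreJBraidgroups}) under the assumption $\gcd(n,m) = 1$, which forces $r \ge 1$ as soon as $n \ge 2$, so the stated case distinction suffices.
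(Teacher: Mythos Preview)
Your proposal is correct and follows exactly the paper's approach: the paper's proof is the one-line ``Since $\textbf{b}_{n,m}=V_n\beta_n^m$, the result follows by combining Lemmas \ref{ArtinToric1} and \ref{ArtinToric2}'', and you have simply spelled out the bookkeeping of that combination (including the useful observation that $A$ and $B$ are fixed by $\beta_n$). Your caveat about the $r=0$ edge case in Lemma \ref{ArtinToric2} is a fair observation the paper glosses over.
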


\begin{proof}
Since $\textbf{b}_{n,m}=V_n\beta_n^m$, the result follows by combining Lemmas \ref{ArtinToric1} and \ref{ArtinToric2}.
\end{proof}

\begin{lemma}\label{ArtinDelta}
Let $n\in \N^*$. Recall from Definition \ref{DefBraidsProofs} that $\Delta_n$ is the element $U_{1,n-1}U_{1,n-2}\cdots U_{1,1}$. Seeing $\Delta_n$ as an element of $B_{n+2}$ via the injection $B_n\to B_{n+2}$, the action of $\Delta_n$ on $\{x_1,\dots,x_{n+2}\}$ is given by
\begin{equation}\label{ArtinDelta1}
    \begin{aligned}
     \rho_{\Delta_n}(x_i)=\begin{cases} 
     {}^{(x_1\cdots x_{n-i})}x_{n-i+1} \text{  $\mathrm{if}$ $i\in [n]$,}\\
     x_i \text{  $\mathrm{if}$ $i\in \{n+1,n+2\}$.}
     \end{cases}
    \end{aligned}
\end{equation}
Moreover, for all $i\in [n]$, we have 
\begin{equation}\label{ArtinDelta2}
    \begin{aligned}
    \rho_{\Delta_n}(x_i\cdots x_n)=x_1\cdots x_{n-i+1}.
    \end{aligned}
\end{equation}
\end{lemma}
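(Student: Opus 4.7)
\medskip

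\noindent\textbf{Proof proposal.} My plan is to prove \eqref{ArtinDelta1} by induction on $n$ using the decomposition $\Delta_n=U_{1,n-1}\,\Delta_{n-1}$, and then to derive \eqref{ArtinDelta2} from \eqref{ArtinDelta1} by downward induction on $i$. Throughout, I will view $\Delta_n$ and $\Delta_{n-1}$ inside $B_{n+2}$ via the standard inclusions. Since $\Delta_n$ only involves $\sigma_1,\dots,\sigma_{n-1}$, its action on $x_{n+1},x_{n+2}$ is trivial, which handles those two cases of \eqref{ArtinDelta1} immediately.

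For the inductive step of \eqref{ArtinDelta1}, the identity $\Delta_n=U_{1,n-1}\,\Delta_{n-1}$ gives $\rho_{\Delta_n}(x_i)=\rho_{\Delta_{n-1}}\bigl(\rho_{U_{1,n-1}}(x_i)\bigr)$. Lemma \ref{ComputeArtin1} then evaluates $\rho_{U_{1,n-1}}(x_1)=(x_1\cdots x_{n-1})x_n(x_1\cdots x_{n-1})^{-1}$ and $\rho_{U_{1,n-1}}(x_i)=x_{i-1}$ for $i\in\llbracket 2,n\rrbracket$. For $i\in\llbracket 2,n\rrbracket$ the conclusion is then immediate from the induction hypothesis applied to $x_{i-1}\in\{x_1,\dots,x_{n-1}\}$. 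For $i=1$ I need to know two things about $\Delta_{n-1}$: first, that $\rho_{\Delta_{n-1}}(x_n)=x_n$ (which holds since $\Delta_{n-1}$ only involves $\sigma_1,\dots,\sigma_{n-2}$); and second, that $\rho_{\Delta_{n-1}}(x_1\cdots x_{n-1})=x_1\cdots x_{n-1}$. This last fact follows from Remark \ref{StableProduct}: every generator $\sigma_j$ appearing in $\Delta_{n-1}$ satisfies $j\in\llbracket 1,n-2\rrbracket$, so each such $\sigma_j$ fixes the product $x_1\cdots x_{n-1}$, and hence so does their product. Applying $\rho_{\Delta_{n-1}}$ to the conjugation expression for $\rho_{U_{1,n-1}}(x_1)$ then yields exactly ${}^{(x_1\cdots x_{n-1})}x_n$, matching the formula.

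For \eqref{ArtinDelta2}, I will proceed by downward induction on $i$ with $n$ fixed. The base case $i=n$ is just the formula $\rho_{\Delta_n}(x_n)=x_1$ obtained from \eqref{ArtinDelta1}. For $i<n$, I will split
\[
\rho_{\Delta_n}(x_i\cdots x_n)=\rho_{\Delta_n}(x_i)\cdot\rho_{\Delta_n}(x_{i+1}\cdots x_n),
\]
use \eqref{ArtinDelta1} to rewrite the first factor as $(x_1\cdots x_{n-i})x_{n-i+1}(x_1\cdots x_{n-i})^{-1}$, and apply the induction hypothesis to the second factor to get $x_1\cdots x_{n-i}$. The $(x_1\cdots x_{n-i})^{-1}$ and the following $(x_1\cdots x_{n-i})$ telescope, leaving $x_1\cdots x_{n-i+1}$, as required.

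The only slightly delicate point is the $i=1$ case of the inductive step for \eqref{ArtinDelta1}, where one must conjugate by the image of $x_1\cdots x_{n-1}$ under $\rho_{\Delta_{n-1}}$; but Remark \ref{StableProduct} handles this cleanly, so I do not expect any real obstacle beyond careful bookkeeping of indices.
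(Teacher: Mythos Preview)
Your proof is correct and follows essentially the same approach as the paper: induction on $n$ via the decomposition $\Delta_n=U_{1,n-1}\Delta_{n-1}$ together with Lemma \ref{ComputeArtin1} and Remark \ref{StableProduct} for \eqref{ArtinDelta1}, then telescoping for \eqref{ArtinDelta2}. The only cosmetic difference is that the paper writes the telescoping for \eqref{ArtinDelta2} as a single product $\prod_{j=i}^{n}{}^{(x_1\cdots x_{n-j})}x_{n-j+1}=x_1\cdots x_{n-i+1}$ rather than framing it as a downward induction, but the content is identical.
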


\begin{proof}
We show that Equation \eqref{ArtinDelta1} holds by induction on $n$. For $n=1$, we have $\Delta_n=1$ and the result is trivial. Assume that \eqref{ArtinDelta1} holds for $n\in \N^*$. Then, extending the action of $\Delta_n$ to $\{x_1,\dots,x_{n+3}\}$ via the injection $B_{n+2}\to B_{n+3}$, we have
\begin{equation}
    \begin{aligned}
    \rho_{\Delta_{n+1}}(x_i)&=\rho_{\Delta_n}\circ\rho_{U_{1,n}}(x_i)\\
    &=\begin{cases}
        \rho_{\Delta_n}({}^{(x_1\cdots x_n)}x_{n+1})\text{  if $i=1$,}\\
        \rho_{\Delta_n}(x_{i-1}) \text{  if $i\in\llbracket 2,n+1\rrbracket$,}\\
        \rho_{\Delta_n}(x_i) \text{  if $i\in\{n+2,n+3\}$}
    \end{cases}\text{ by Lemma \ref{ComputeArtin1},}\\
    &=\begin{cases}
    {}^{(x_1\cdots x_n)}x_{n+1} \text{  $\mathrm{if}$ $i=1$,}\\
    {}^{(x_1\cdots x_{n-i+1})}x_{n-i+2} \text{  $\mathrm{if}$ $i\in \llbracket 2,n+1\rrbracket$,}\\
     x_i \text{  $\mathrm{if}$ $i\in \{n+2,n+3\}$.}
     \end{cases} \text{ by induction hypothesis,}
    \end{aligned}
\end{equation}
where in the last equality for $i=1$ we used Remark \ref{StableProduct} to see that $x_1\cdots x_n$ is fixed by $\Delta_n$. This proves \eqref{ArtinDelta1}. For all $i\in [n]$, we then have 
\begin{equation}
     \begin{aligned}
     \rho_{\Delta_n}(x_i\cdots x_n)&=\prod_{j=i}^{n}(x_1\cdots x_{n-j})x_{n-j+1}(x_1\cdots x_{n-j})^{-1}\\
     &=x_1\cdots x_{n-i+1},
     \end{aligned}
\end{equation}
which concludes the proof.
\end{proof}

We are now ready to prove Theorem \ref{NecklaceGroupsAreJBraidgroups}:

\begin{proof}[Proof of Theorem \ref{NecklaceGroupsAreJBraidgroups}]
Combining Theorem \ref{LinkGroup} with Proposition \ref{ArtinToric}, a presentation for $\mathcal L_*^*(n,m)$ is given by 
\begin{subequations}
\label{NMPres1} 
\begin{align}
&(1) \,\, \mathrm{Generators}\!:\,  \{x_1,\dots,x_{n+2}\};\notag\\
&(2) \,\, \mathrm{Relations}\!: \,\notag\\
  & x_i\delta x_{n+2}\delta^q=\delta x_{n+2}\delta^qx_{n-r+i} \, \, \forall 1\leq i\leq r ,\label{NMPres1:1}\\
       &  x_{i}\delta x_{n+2}\delta^{q-1}=\delta x_{n+2}\delta^{q-1}x_{i-r} \, \, \forall r+1\leq i\leq n,\label{NMPres1:2}\\
      &   x_{n+1}\delta x_{n+2}\delta^{q-1}x_{n-r+1}\cdots x_n=\delta x_{n+2}\delta^{q-1}x_{n-r+1}\cdots x_nx_{n+1},\label{NMPres1:3}\\
      & x_{n+2}\delta=\delta x_{n+2}, \label{NMPres1:4}
\end{align}
\end{subequations}
where $\delta$ denotes $x_1\cdots x_{n+1}$.\\
Thus, by Remark \ref{AutoFree}, a presentation for $\mathcal L_*^*(n,m)$ is obtained from applying $\rho_{\Delta_n}$ to both sides of every relation in Presentation \eqref{NMPres1}. Applying Lemma \ref{ArtinDelta} and using that $x_1\cdots x_{n+1}$ is fixed by $\Delta_n$, this gives

\begin{subequations}
\label{NMPres2} 
\begin{align}
&(1) \,\, \mathrm{Generators}\!:\,  \{x_1,\dots,x_{n+2}\};\notag\\
&(2) \,\, \mathrm{Relations}\!: \,\notag\\
    & {}^{(x_1\cdots x_{n-i})}x_{n-i+1}\delta x_{n+2}\delta^q=\delta x_{n+2}\delta^q{}^{(x_1\cdots x_{r-i})}x_{r-i+1} \, \, \forall 1\leq i\leq r,\label{NMPres2:1}\\
       &  {}^{(x_1\cdots x_{n-i})}x_{n-i+1}\delta x_{n+2}\delta^{q-1}=\delta x_{n+2}\delta^{q-1}{}^{(x_1\cdots x_{n-i+r})}x_{n-i+r+1} \, \, \forall r+1\leq i\leq n,\label{NMPres2:2}\\
      &   x_{n+1}\delta x_{n+2}\delta^{q-1}x_{1}\cdots x_r=\delta x_{n+2}\delta^{q-1}x_{1}\cdots x_rx_{n+1},\label{NMPres2:3}\\
      & x_{n+2}\delta=\delta x_{n+2}, \label{NMPres2:4}
\end{align}
\end{subequations}
where $\delta$ denotes $x_1\cdots x_{n+1}$.\\
Now, for all $j\in[n]$ we have 
\begin{equation}\label{ConjTodelta}
    \begin{aligned}
    {}^{(x_1\cdots x_{j-1})}x_j\delta&=x_1\cdots x_jx_j\cdots x_{n+1}.
    \end{aligned}
\end{equation}
By Equation \eqref{ConjTodelta}, for all $i\in [r]$ Relation \eqref{NMPres2:1} is equivalent to $$x_1\cdots x_{n-i+1}x_{n-i+1}\cdots x_{n+2}\delta^qx_1\cdots x_{r-i}=\delta x_{n+2}\delta^{q}x_1\dots x_{r-i+1},$$
which in turn is equivalent to 
\begin{equation}\label{Eq1ProofNM}
\begin{aligned}
x_{n-i+1}\dots x_{n+2}\delta^{q}x_1\cdots x_{r-i}=x_{n-i+2}\cdots x_{n+2}\delta^qx_1\cdots x_{r-i+1}.
\end{aligned}
\end{equation}
When $i$ ranges from $1$ to $r$, the tuple $(n-i+1,r-i,n-i+2,r-i+1)$ ranges from $(n,r-1,n+1,r)$ to $(n-r+1,0,n-r+2,1)$. Thus, the set of relations 
\eqref{NMPres2:1} is equivalent to the set of relations
\begin{equation}
   \begin{aligned}
   x_i\cdots x_{n+2}\delta^qx_1\cdots x_{i-n+r-1}=x_j\cdots x_{n+2}\delta^qx_1\cdots x_{j-n+r-1},\, \forall n-r+1\leq i<j\leq n+1.
   \end{aligned} 
\end{equation}
With very similar computations, one obtains that the set of relations
\eqref{NMPres2:2}
is equivalent to the set of relations
\begin{equation} 
    \begin{aligned}
    x_i\cdots x_{n+2}\delta^{q-1}x_1\cdots x_{i+r-1}=x_j\cdots x_{n+2}\delta^{q-1}x_1\cdots x_{j+r-1}, \, \forall 1\leq i<j\leq n-r+1.
    \end{aligned}
\end{equation}
Thus, relabelling $x_{n+1}$ by $y$ and $x_{n+2}$ by $z$, the group $\mathcal L_*^*(n,m)$ admits the following presentation:
     \begin{subequations}
\label{NMPres3} 
\begin{align}
&(1) \,\, \mathrm{Generators}\!:\,  \{x_1,\dots,x_{n},y,z\};\notag\\
&(2) \,\, \mathrm{Relations}\!: \,\notag\\
  &x_i\cdots x_{n}yz\delta^qx_1\cdots x_{i-n+r-1}=x_j\cdots x_{n}yz\delta^qx_1\cdots x_{j-n+r-1},\,(*), \label{NMPres3:1}\\
       &    x_i\cdots x_{n}yz\delta^{q-1}x_1\cdots x_{i+r-1}=x_j\cdots x_{n}yz\delta^{q-1}x_1\cdots x_{j+r-1}, \, (**)  ,\label{NMPres3:2}\\
      &   y\delta z\delta^{q-1}x_{1}\cdots x_r=\delta z\delta^{q-1}x_{1}\cdots x_ry,\label{NMPres3:3}\\
      & z\delta=\delta z, \label{NMPres3:4}\\
      &(*)\, \forall n-r+1\leq i<j\leq n+1\, (**)\, \forall 1\leq i<j\leq n-r+1 \notag
\end{align}
\end{subequations}
where $\delta$ denotes $x_1\cdots x_{n}y$.
Now, Relations \eqref{NMPres3:1} and \eqref{NMPres3:2} imply \eqref{NMPres3:3}. Indeed, we have
\begin{equation*}
    \begin{aligned}
    \delta z\delta^{q-1}x_1\cdots x_ry&=x_{n-r+1}\cdots x_ny\delta^{q-1}x_1\cdots x_ny \text{  by \eqref{NMPres3:2},}\\
    &=y\delta z\delta^{q-1}x_1\dots x_r \text{  by \eqref{NMPres3:1}.}
    \end{aligned}
\end{equation*}
Thus, Presentation \eqref{BraidPres} is a presentation for $\mathcal L_*^*(n,m)$. Moreover, by remark \ref{AutoFree} the generators of Presentation \eqref{NMPres3} are meridians. Finally, under the identification between the generators of Presentation \eqref{BraidPres} and the meridians of $\mathrm L_*^*(n,m)$, we see that $y$ corresponds to the meridian around the circle in the internal core of the torus and $z$ corresponds to the meridian around the circle at the external core of the torus. Thus combining Remark \ref{SquareBraid} and Proposition \ref{LinkQuotient}, we get that quotienting $\mathcal L_*^*(n,m)$ by the normal closure of the meridians around the component we remove to obtain the link $\mathrm L_*(n,m)$ (respectively $\mathrm L^*(n,m)$) from $\mathrm L_*^*(n,m)$ amounts to quotienting by the normal closure of $z$ (respectively $y$) in Presentation \eqref{BraidPres}. This shows points (ii)-(iv) of Theorem \ref{NecklaceGroupsAreJBraidgroups} and concludes the proof.
\end{proof}

\begin{remark}\label{SquareLinks}
The Square \eqref{SquareBraid} is also valid for $\mathcal L_*^*(n,m),\mathcal L_*(n,m),\mathcal L^*(n,m)$ and $\mathcal L(n,m)$.
\end{remark}
\begin{remark}
Theorem \ref{NecklaceGroupsAreJBraidgroups} extends the philosophy of \cite{Gobet Toric} from toric reflection groups (the $J$-reflection groups with one conjugacy class of reflecting hyperplanes) to all $J$-reflection groups, in particular to all irreducible rank two complex reflection groups. See Table \ref{TableBraids} for the correspondence between rank two complex braid groups and torus necklace groups.
\end{remark}

The correspondence of Table \ref{TableBraids} can also be derived as follows: Given an irreducible complex reflection group $W\subset \mathrm{GL}_n(\C)$, recall from \cite{BessisZariski} and \cite{BMR} that its associated complex braid group $\B_\C(W)$ can be constructed as the fundamental group of the complement of the zero locus of a polynomial $f_W\in \C[z_1,\dots,z_n]$, called the \textit{discriminant} of $W$. Under this identification, the braid reflections are the so-called \textit{generators of the monodromy} or \textit{meridians} around each irreducible component of $f_W^{-1}(0)$ in $\C^n$ (see \cite{BessisZariski}). Now, by \cite[Theorem 2.10]{Milnor} the inclusion $\iota_W:S^{2n-1}\backslash (S^{2n-1}\cap f_W^{-1}(0))\into \C^n\backslash f_W^{-1}(0)$ is a homotopy equivalence, which sends meridians to meridians. In particular, the induced isomorphism between homotopy groups sends meridians to meridians.\\

In \cite{Bannai}, Bannai computed one such polynomial for each irreducible complex reflection group of rank two, which we compile in Table \ref{TableBraids} as well. Inspecting \cite[\S 2]{Bannai}, we see that two irreducible rank two complex reflection groups share the same braid group if and only if their discriminant is the same. It turns out that given an irreducible complex reflection group of rank two, the discriminant of $W$ is always a polynomial whose zero locus defines the torus necklace corresponding to $\B_\C(W)$ in Table \ref{TableBraids} (see \cite[\S 3]{Budney} for the defining polynomials of torus necklaces):\\\\
$\bullet$ For the complex braid group corresponding to the torus necklace group $\mathcal L(p,q)$ the discriminant given by Bannai is either $f(z_1,z_2)=z_1^p-z_2^q$ or $f(z_1,z_2)=z_1^q-z_2^p$, which satisfies either $S^3\cap f^{-1}(0)=\mathrm L(p,q)$ or $S^3\cap f^{-1}(0)=\mathrm L(q,p)\sim \mathrm L(p,q)$.\\
$\bullet$ For the complex braid group corresponding to the torus necklace group $\mathcal L_*(1,q)$ the discriminant given by Bannai is $f(z_1,z_2)=z_1^{2q}-z_2^{2}$, thus $S^3\cap f^{-1}(0)=\mathrm L(2q,2)$. By \cite[Proposition 3.5]{Budney}, the links $\mathrm L_*(1,q)$ and $\mathrm L(2q,2)$ are isotopic.\\
$\bullet$ For the complex braid group corresponding to the torus necklace group  $\mathcal L_*^*(1,q)$ the discriminant given by Bannai is $f(z_1,z_2)=z_1(z_1^{2q}-z_2^{2})$, thus $S^3\cap f^{-1}(0)=\mathrm L_*(2q,2)$. By \cite[Proposition 3.5]{Budney}, the links $\mathrm L_*^*(1,q)$ and $\mathrm L_*(2q,2)$ are isotopic.\\
$\bullet$ For the complex braid group $\B_\C(G_{13})$, which corresponds to the torus necklace group $\mathcal L_*(2,3)$, the discriminant given by Bannai is $f(z_1,z_2)=z_1(z_1^2-z_2^3)$, which satisfies $S^3\cap f^{-1}(0)=\mathrm L_*(2,3)$.\\
$\bullet$ Finally, for the complex braid group $\B_\C(G(d(2l+1),(2l+1),2))$ which corresponds to the torus necklace group $\mathcal L^*(2,2l+1)$, the discriminant given by Bannai is $f(z_1,z_2)=z_1(z_1^{2l+1}-z_2^2)$, thus $S^3\cap f^{-1}(0)=\mathrm L_*(2l+1,2)\sim\mathrm L^*(2,2l+1)$.\\

We obtain the following corollary:

\begin{corollary}\label{CorollaryHomotopyEquivalence}
Let $W$ be an irreducible complex reflection group of rank two and let $\mathrm L$ be the torus necklace corresponding to $\B_\C(W)$ in Table \ref{TableBraids}. There exists a homotopy equivalence between the regular orbit space of $W$ and $S^3\backslash \mathrm L$ which induces a bijection between (homotopy classes of) braid reflections and meridians. 
\end{corollary}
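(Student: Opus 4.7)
The plan is to assemble the ingredients laid out in the discussion preceding the statement. By \cite{BessisZariski} and \cite{BMR}, the regular orbit space of $W$ is homotopy equivalent to $\C^2\backslash f_W^{-1}(0)$, where $f_W$ is the discriminant polynomial computed by Bannai, and under this homotopy equivalence the braid reflections of $\B_\C(W)$ correspond to the meridians around the irreducible components of $f_W^{-1}(0)$. This reduces the problem to relating $\C^2\backslash f_W^{-1}(0)$ to $S^3\backslash \mathrm L$.

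Next, I would invoke Milnor's theorem \cite[Theorem 2.10]{Milnor} to obtain that the inclusion
\[
\iota_W:S^3\backslash (S^3\cap f_W^{-1}(0))\into \C^2\backslash f_W^{-1}(0)
\]
is a homotopy equivalence. A crucial property of $\iota_W$, which is essential for the second half of the statement, is that it sends meridians around irreducible components of $S^3\cap f_W^{-1}(0)$ (inside $S^3$) to meridians around the corresponding irreducible components of $f_W^{-1}(0)$ (inside $\C^2$). Combined with the previous paragraph, this already produces a homotopy equivalence between the regular orbit space of $W$ and $S^3\backslash (S^3\cap f_W^{-1}(0))$ sending braid reflections to (homotopy classes of) meridians.

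It then remains to identify $S^3\cap f_W^{-1}(0)$ with the torus necklace $\mathrm L$ listed for $W$ in Table \ref{TableBraids}. This is a case-by-case verification: for each irreducible complex reflection group of rank two, Bannai's explicit discriminant $f_W$ is of the form $z_1^az_1^bz_2^c$ or $z_1^{\epsilon}(z_1^p-z_2^q)$ with $\epsilon\in\{0,1\}$, and its link at the origin is a standard torus link, possibly unioned with one or two axis circles. These zero loci have already been matched up in the bullet-point analysis preceding the statement, using \cite[Proposition 3.5]{Budney} to identify e.g.\ $\mathrm L(2q,2)$ with $\mathrm L_*(1,q)$ and $\mathrm L_*(2q,2)$ with $\mathrm L_*^*(1,q)$.

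The main (and essentially only) obstacle is the last step: the isotopy identification of each $S^3\cap f_W^{-1}(0)$ with the corresponding torus necklace in Table \ref{TableBraids}. Once these identifications are in place, the corollary follows by composing the three homotopy equivalences above and tracking the images of meridians at each stage, noting that the meridians of $\mathrm L$ inside $S^3\backslash \mathrm L$ are by definition the meridians around the irreducible components of $S^3\cap f_W^{-1}(0)$.
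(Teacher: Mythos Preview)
Your proposal is correct and follows exactly the same route as the paper, which presents the corollary as an immediate consequence of the discussion preceding it: the identification of the regular orbit space with $\C^2\setminus f_W^{-1}(0)$ via \cite{BessisZariski} and \cite{BMR}, Milnor's homotopy equivalence $\iota_W:S^3\setminus(S^3\cap f_W^{-1}(0))\hookrightarrow \C^2\setminus f_W^{-1}(0)$ sending meridians to meridians, and the bullet-point case analysis (using \cite[Proposition 3.5]{Budney}) identifying $S^3\cap f_W^{-1}(0)$ with the listed torus necklace. (There is a small typo in your general form for $f_W$: the expression ``$z_1^az_1^bz_2^c$'' should presumably read $z_1^a-z_2^b$ or $z_1^\epsilon(z_1^p-z_2^q)$.)
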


\begin{table}[hbt]
\centering
 \renewcommand{\arraystretch}{1.25}
\begin{tabular}{|l|c|c|}
\hline
Complex braid groups &  torus necklace groups & Discriminant \\
\hline
$\B_\C(G(3,3,2)),\B_\C(G_4),\B_\C(G_8),\B_\C(G_{16})$ &  $\mathcal L(2,3)$ & $z_1^3-z_2^2$
\\
\hline
 $\B_\C(G_{20})$ &  $\mathcal L(2,5)$& $z_1^5-z_2^2$\\
\hline
 $\B_\C(G(2l+1,2l+1,2))$ &  $\mathcal L(2,2l+1)$ & $z_1^{2l+1}-z_2^{2}$
\\
\hline
 $\B_\C(G_{12})$&  $\mathcal L(3,4)$ & $z_1^3-z_2^4$
\\
\hline
 $\B_\C(G_{22})$ &  $\mathcal L(3,5)$ & $z_1^3-z_2^5$\\
\hline
 $\B_\C(G_5),\B_\C(G_{10}),\B_\C(G_{18})$ &  $\mathcal L_*(1,2)$& $z_1^4-z_2^2$
\\
\hline
$\B_\C(G(d(2l+1),2l+1,2))$ &  $\mathcal L^*(2,2l+1)$& $z_1(z_1^{2l+1}-z_2^2)$
\\
\hline
 $\B_\C(G_6),\B_\C(G_9),\B_\C(G_{17})$ &  $\mathcal L_*(1,3)$& $ z_1^6-z_2^2$
\\
\hline
 $\B_\C(G_{13})$ & $\mathcal L_*(2,3)$& $z_1(z_1^2-z_2^3)$
\\
\hline
 $\B_\C(G_{14})$ &  $\mathcal L_*(1,4)$& $z_1^8-z_2^2$
\\
\hline 
 $\B_\C(G_{21})$ &  $\mathcal L_*(1,5)$& $z_1^{10}-z_2^{2}$
\\
\hline
$\B_\C(G(2d,2d,2))$ &  $\mathcal L_*(1,d)$& $z_1^{2d}-z_2^{2}$
\\
\hline
$\B_\C(G_7),\B_\C(G_{11}),\B_\C(G_{19})$ &  $\mathcal L_*^*(1,1)$& $z_1(z_1^2-z_2^2)$
\\
\hline
$\B_\C(G_{15})$ &  $\mathcal L_*^*(1,2)$& $z_1(z_1^4-z_2^2)$
\\
\hline
 $\B_\C(G(2cd,2c,2))$ &  $\mathcal L_*^*(1,d)$& $z_1(z_1^{2d}-z_2^2)$
\\
\hline
\end{tabular}
\caption{Rank two complex braid groups as torus necklace groups.}
\label{TableBraids}
\end{table}

\section{\texorpdfstring{Correspondence between generalised $J$-braid groups, torus necklace groups and circular groups}{}}
In Section 4, we showed that all $J$-braid groups are torus necklace groups. In particular, Theorem \ref{IsoBraidsJIgor} combined with Theorem \ref{NecklaceGroupsAreJBraidgroups} show that torus necklace groups with coprime parameters are isomorphic to circular groups, which in turn implies that they are Garside groups by Proposition \ref{CircularGarside}. The goal of this section is to extend this correspondence further by relaxing the coprimality assumption on the parameters.
More specifically, we define generalised $J$-braid groups and observe that the correspondence of Theorem \ref{NecklaceGroupsAreJBraidgroups} extends to all torus necklace groups instead of merely the case where $n$ and $m$ are coprime. Moreover, we show that up to abstract group isomorphisms, the family of generalised $J$-braid groups (equivalently, the family of torus necklace groups) and the family of circular groups are the same.

\subsection{\texorpdfstring{Generalised $J$-braid groups and torus necklace groups}{}}\label{SectionGeneralised}

When defining $J$-groups, one can remove the coprimality assumption on $n$ and $m$. Thus, one can extend the definition of $J$-reflection groups:

\begin{definition}
Let $k,b,n,c,m\in\N^*$ with $k,bn,cm\geq 2$. The group $W_b^c(k,bn,cm)$ is defined as $J\begin{pmatrix} k & bn & cm \\ & n & m\end{pmatrix}$. Such a group is called a \textbf{generalised $J$-reflection group}.
\end{definition}

\begin{remark}
The proof of Theorem \ref{GeneralPres} does not use the fact that $n$ and $m$ are coprime. Thus, Presentation \eqref{GeneralPresW} is a presentation of $W_b^c(k,bn,cm)$.
\end{remark}

We can now define $\B(W_b^c(k,bn,cm))$ in the exact same way:

\begin{definition}\label{DefBraidLinksClassical}
Let $W_b^c(k,bn,cm)$ be a generalised $J$-reflection group. Its braid group $\B_J(W_b^c(k,bn,cm))$ is defined precisely as in Definition \ref{DefBraid}. In this context, we still call \textbf{braid reflections} the conjugate of non trivial powers of elements of $\{x_1,\dots,x_n,y,z\}$.
\end{definition}
The notion of reflection isomorphism still holds for generalised $J$-reflection groups. In this case, to show that the analog of Theorem \ref{WellDefBraids} (ii) holds for generalised $J$-reflection groups, one would need to provide a precise classification of such groups up to reflection isomorphism. However, since these results are not necessary for the purposes of this article, we do not discuss them in detail and leave this discussion to the author's thesis (\cite{These}).

The isomorphisms of Theorem \ref{NecklaceGroupsAreJBraidgroups} are still valid, as we never used the fact that $n$ and $m$ are coprime in Section 4. In this context, the meridians of torus necklace groups correspond to braid reflections of generalised $J$-braid groups. Moreover, we have the following:
\begin{remark}\label{IsoNotCoprime}
By Proposition \ref{IsotopicLinks}, the braid groups of $W_b^c(k,bn,cm)$ and $W_c^b(k,cm,bn)$ are isomorphic. 
\end{remark}

\subsection{Link between torus necklace groups and circular groups}
First, all circular groups are torus necklace groups. Indeed, The link $L(n,m)$ is the $(n,m)$-torus link, whose link group is isomorphic to $\mathcal B(n,m)$ by Theorem \ref{NecklaceGroupsAreJBraidgroups}, which in turn is isomorphic to $G(n,m)$ by definition. In this case, the generators of Presentation \eqref{PresCircular} correspond to meridians of $\mathcal L(n,m)$ and braid reflections of $\mathcal B(n,m)$.
\begin{remark}\label{CircularIso}
The fact that the $(n,m)$-torus link group is isomorphic to the circular group $G(n,m)$ was stated by Bessis without proof in \cite{DualBessis} and the specific cases where either $n$ divides $m$ (see \cite{PicantinTorus}) or $n$ and $m$ are coprime (see \cite{Gobet Garside}) are already proven. The above discussion provides a proof of this fact. Moreover, Proposition \ref{IsotopicLinks} provides a new proof of the fact that the groups $G(n,m)$ and $G(m,n)$ are isomorphic. To the knowledge of the author, this fact was first proven in general in \cite[Proposition 2.20]{GarnierHoso}.
\end{remark}

\begin{RemConj}
It should be possible to generalise the isomorphism between the group with Presentations \eqref{PresTorusKnot} and $G(n,m)$ when $n$ and $m$ are coprime as proven in \cite{Gobet Garside} to the group with Presentation \eqref{PresTorusLinkRZ} and $G(n,m)$ for all $n,m\in\N^*$. More precisely, we conjecture that the morphism from the group with Presentation \eqref{PresTorusLinkRZ} to the group with Presentation \eqref{PresCircular} sending $x$ to $a_1\cdots a_m$, $y$ to $a_1\cdots a_n$ and $m_i$ to $a_i$ for all $i\in[d]$ is an isomorphism, but it is not clear to us how to construct the inverse from these presentations. A possible strategy would be to mimic what is done in \cite[Proposition 4.12 and Remark 4.13]{Gobet Garside}. Note however that by Remark \ref{CircularIso}, for all $n,m\in\N^*$ the group with Presentation \eqref{PresTorusLinkRZ} is isomorphic to $G(n,m)$.
\end{RemConj}

In \cite[Theorem 1]{SeifertLinks}, Burde and Murasugi classify all links whose link group's center is non-trivial (equivalently, all Seifert links). They fall into four categories (see also \cite[Section 3]{BoyerSeifert} or \cite[Proposition 3.3]{Budney} ):\\
(a) The link is either of the form $L_*(n,n)$ (equivalently, of the form $L^*(n,n)$) or $L_*^*(n,m)$ with $n,m\in\N^*$.\\
(a') The link is of the form $K(k)$ with $k\in\N$.\\
(b) The link is of the form $L_*(n,m)$ (equivalently, of the form $L^*(n,m)$) with $n\neq m\in\N^*$.\\
(c) The link is of the form $L(n,m)$ with $n,m\in \N^*$.\\

In cases (a) and (a') the link group is (abstractly) isomorphic to $F_{k-1}\times\Z\cong G(k,k)$ where $k$ is the number of connected components of the link (see \cite{SeifertLinks}). As stated in Remark \ref{CircularIso}, in case (c) the link group is isomorphic to $G(n,m)$. We now show that in case (b), the link group is also isomorphic to a circular group:

\begin{theorem}\label{LnIso}
For $n,m\in \N^*$ with $d=n\wedge m$, write $m=dm'$. The group $\mathcal L_*(n,m)$ is isomorphic to $G(d+1,(d+1)m')$.
\end{theorem}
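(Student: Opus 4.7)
The plan is to reduce the computation to a presentation manipulation in the generalised $J$-braid framework of Section \ref{SectionGeneralised}. Since the proof of Theorem \ref{NecklaceGroupsAreJBraidgroups} in Section 4 nowhere uses the coprimality of $n$ and $m$, the same Artin-representation computation yields $\mathcal L_*(n,m)\cong \B_*(n,m)$ in the generalised sense, where $\B_*(n,m)$ is defined by Presentation \eqref{BraidPres} after imposing $z=1$. It therefore suffices to construct an explicit isomorphism $\B_*(n,m)\to G(d+1,(d+1)m')$.

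To build this isomorphism, first write $n=dn'$, $m=dm'$ with $n'\wedge m'=1$, and $m=qn+r$ with $0\leq r\leq n-1$, noting that $r\wedge n = n\wedge m=d$, so that the cyclic shift $i\mapsto i+r\pmod n$ on $\{x_1,\ldots,x_n\}$ has exactly $d$ orbits, each of size $n'$. The quasi-braid relations of Presentation \eqref{BraidPres} force the $x_i$ within a common orbit to be conjugate, and an iterated substitution shows that after such identifications only $d+1$ effective generators survive. I would then define a candidate morphism $\B_*(n,m)\to G(d+1,(d+1)m')$ by sending $y$ to one of the circular generators (say $a_{d+1}$) and each $x_i$ to the $a_j$ labelled by the orbit of $i$; the point is to check that the defining relations of $\B_*(n,m)$ become consequences of the circular relations $a_1\cdots a_{(d+1)m'}=a_2\cdots a_{(d+1)m'+1}=\cdots$ in $G(d+1,(d+1)m')$, and that these are in fact equivalent after the orbit collapse.

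For the reverse direction, I would use that the centers on both sides are infinite cyclic and can be matched explicitly: the center of $\B_*(n,m)$ descends from the central element of Theorem \ref{CenterFINDANAME}, while the center of $G(d+1,(d+1)m')$ is generated by $\alpha^{m'}$ (Theorem \ref{CenterCircularGroups}). After fixing these central elements compatibly, the inverse morphism can be constructed directly on generators. As cross-checks, the case $d=1$ must recover $\B_*(n,m)\cong G(2,2m)$ from Theorem \ref{IsoBraidsJIgor}, and the case $n=m$ (so $d=n$, $m'=1$) must recover $\mathcal L_*(n,n)\cong \mathcal L_*^*(n,n)\cong G(n+1,n+1)$ as predicted by Proposition \ref{IsotopicLinks} and the key-chain computation mentioned in the introduction.

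The hard part will be the combinatorial bookkeeping in the middle step: the Euclidean division $m=qn+r$ interacts nontrivially with the gcd structure, and verifying that each of the two families of quasi-braid relations in Presentation \eqref{BraidPres} collapses to the expected cyclic rotation of a single word equality requires a careful index chase. In particular, the identification of the $d$ orbits of $\{x_1,\ldots,x_n\}$ with $\{a_1,\ldots,a_d\}$ must be made in a way that is compatible with the natural cyclic ordering of the $a_i$'s in the circular presentation, and this choice is what makes the relations line up.
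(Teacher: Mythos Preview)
Your proposal has a genuine gap in the construction of the forward map. You write that you would send each $x_i$ to the circular generator $a_j$ labelled by the shift-orbit of $i$ (under $i\mapsto i+r\pmod n$) and $y\mapsto a_{d+1}$. But the relations of $\B_*(n,m)$ only force the $x_i$ in a common orbit to be \emph{conjugate}, not equal, so collapsing them to a single generator loses information and does not even define a homomorphism in general. Concretely, take $n=2$, $m=3$ (so $d=1$, $n'=2$, $m'=3$, and the target is $G(2,6)$). With $q=1$, $r=1$ and $z=1$, relation \eqref{BraidPresDef:2} for $i=1$ reads $x_2 y x_1 x_2 = x_1 x_2 y x_1$. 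Under your map $x_1,x_2\mapsto a_1$, $y\mapsto a_2$, this becomes $a_1 a_2 a_1 a_1 = a_1 a_1 a_2 a_1$, i.e.\ $a_1 a_2 = a_2 a_1$, which fails in $G(2,6)$. So the naive orbit-collapse does not work as soon as $n'>1$.

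The paper's route avoids this by passing through $\mathcal L_*^*(n,m)$ first: it builds an explicit isomorphism $\mathcal L_*^*(n,m)\cong G(d+2,d+2)$ in which $x_i$ is sent to a \emph{conjugate} $a_1^{\lceil i/d\rceil-1}a_{[i]_d+2}a_1^{1-\lceil i/d\rceil}$ rather than to $a_{[i]_d+2}$ itself (Proposition \ref{LtoG(d+2,d+2)}). Quotienting by the image of $z$ then yields an explicit presentation of $\mathcal L_*(n,m)$ as $G(d+2,d+2)$ modulo one extra relation (Presentation \eqref{PresQuotientL1}), and a second pair of explicit maps identifies this with $G(d+1,(d+1)m')$ (Propositions \ref{Iso2Conj1}--\ref{Iso2Conj2}). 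The two-step detour through $G(d+2,d+2)$ is what absorbs the conjugation twists you are missing; if you want a direct map from $\B_*(n,m)$, you must at minimum send $x_i$ to a suitable conjugate of its orbit-label generator, and the bookkeeping for that is exactly what the intermediate step organises.
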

We thus classified the isomorphism type of all link groups with non-trivial center:
\begin{corollary}\label{GarsideLinkGroups} A group is isomorphic to a link group with non-trivial center if and only if it is isomorphic to a circular group. In particular, a link group is Garside if and only if it has a non-trivial center.
\end{corollary}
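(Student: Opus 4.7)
The plan is to stitch together Theorem \ref{LnIso}, Remark \ref{CircularIso}, Proposition \ref{CircularGarside}, the Burde--Murasugi classification (Theorem \ref{BurdeMurasugiIntro}), and the identifications recorded in the introduction for $\mathrm L_*^*(n,m)$ and the key-chain links. Once those ingredients are in place the corollary is essentially bookkeeping, so I would mainly be organising a case analysis.

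First I would establish the forward direction of the first equivalence: assume $G \cong \pi_1(S^3\backslash L)$ has nontrivial center. By Theorem \ref{BurdeMurasugiIntro} the link $L$ is isotopic either to a torus necklace or to a key-chain link, and I would run through the five cases. For $L = \mathrm L(n,m)$, Remark \ref{CircularIso} gives $G \cong G(n,m)$. For $L = \mathrm L_*(n,m)$, Theorem \ref{LnIso} gives $G \cong G(d+1,(d+1)m')$ with $d = n\wedge m$ and $m = dm'$. For $L = \mathrm L^*(n,m)$, I would first apply Proposition \ref{IsotopicLinks}(ii) to pass to $\mathrm L_*(m,n)$ and then invoke Theorem \ref{LnIso} once more. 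Finally, for $L = \mathrm L_*^*(n,m)$ or $L = \mathrm K(k)$, the identifications $G \cong G(n\wedge m +1, n\wedge m +1)$ and $G \cong G(k+1,k+1)$ recorded in the introduction (citing \cite{SeifertLinks}) finish the argument.

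For the converse of the first equivalence, every circular group $G(n,m)$ is already a link group with nontrivial center: take the $(n,m)$-torus link, whose group is $G(n,m)$ by Remark \ref{CircularIso} and has nontrivial center by Theorem \ref{CenterCircularGroups}. This completes the first equivalence. For the "in particular" statement, the implication "nontrivial center $\Rightarrow$ Garside" is immediate from the first equivalence combined with Proposition \ref{CircularGarside}, while "Garside $\Rightarrow$ nontrivial center" is the standard fact that the center of any Garside group contains a power of the Garside element and is therefore nontrivial.

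No step is a serious obstacle — the corollary is essentially a synthesis of earlier results. The subtlety I anticipate is making sure every edge case on the Burde--Murasugi list is matched with a circular group, in particular the degenerate values of $n$ and $m$ in $\mathrm L_*(n,m)$ and $\mathrm L^*(n,m)$. That is handled uniformly by the fact that Theorem \ref{LnIso} is stated for arbitrary positive integers (not only coprime pairs) and by the symmetry isotopies of Proposition \ref{IsotopicLinks}, which ensure that the $\mathrm L_*$ and $\mathrm L^*$ families are dealt with in parallel.
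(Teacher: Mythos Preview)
Your proposal is correct and follows essentially the same approach as the paper: both proofs combine the Burde--Murasugi classification with the case-by-case identifications (Remark \ref{CircularIso} for torus links, Theorem \ref{LnIso} for $\mathrm L_*$ and $\mathrm L^*$, and the $G(k,k)$ identifications from \cite{SeifertLinks} for $\mathrm L_*^*$ and key-chain links), and then deduce the Garside statement from Proposition \ref{CircularGarside} together with the standard fact that Garside groups have nontrivial center. Your write-up is simply a more explicit unpacking of the same argument.
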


\begin{proof}
The first part is a consequence of \cite[Theorem 1]{SeifertLinks} combined with Remark \ref{CircularIso} and Theorem \ref{LnIso}.
For the second assertion, we know that a Garside group must have a non-trivial center (see \cite[Proposition 2.6]{Origines}). Hence, for a link group to be Garside, it has to be a Seifert link group, which is isomorphic to a circular group. The proof then follows from the fact that circular groups are Garside groups (see \cite[Example 5]{Origines}).
\end{proof}

\begin{remark}
The fact that torus link groups are Garside was proven in \cite{PicantinTorus}, where the first explicit bi-automatic structure was given on these groups.
\end{remark}

\begin{remark}
In cases $(a),(a')$ and $(b)$, the isomorphism between the link group and the circular group is an abstract group isomorphism, as opposed to the case $(c)$ where the generators of the circular group correspond to meridians as discussed in the beginning ot the section.
\end{remark}

In order to prove Theorem \ref{LnIso}, we first exhibit an explicit isomorphism between $\mathcal L_*^*(n,m)$ and $G(d+2,d+2)$ which generalises the isomorphism constructed in \cite[Theorem 2.18]{BraidsJIgor}.
This will provide another presentation of $\mathcal L_*(n,m)$ which will be the one used to find an isomorphism with $G(d+1,(d+1)m')$, generalising the isomorphism constructed in \cite[Theorem 3.27]{BraidsJIgor}.\\

For the following definitions and results, we use the notations of Presentation \eqref{NMPres3}.
\begin{definition}\label{DefwWDelta}
Let $n,m\in\N^*$ and write $m=qn+r$ with $0\leq q$ and $0\leq r\leq n-1$. Write $d=n\wedge m=n\wedge r$ so that $m=dm'$, $n=dn'$ and $r=dr'$ with $n'\wedge m'=n'\wedge r'=1$.
Define $w\in \mathcal L_*^*(n,m)$ to be the element represented by one of the words in $$\{z\delta^qx_1\cdots x_r\}\cup \{x_i\cdots x_nyz\delta^{q-1}x_1\cdots x_{i+r-1}\}_{i\in [n-r+1]},$$ which is well defined by \eqref{NMPres3:2} and \eqref{NMPres3:4}. Moreover, define $W$ to be the element $wy\in \mathcal L_*^*(n,m)$ (equivalently, using \eqref{NMPres3:1}, the element represented by one of the words in $\{x_i\cdots x_nyz\delta^qx_1\cdots x_{i+r-n-1}\}_{i\in \llbracket n-r+1,n+1\rrbracket}$).\\
Finally, define $\Delta$ to be $w^{n'-r'}W^{r'}$.
\end{definition}

The first goal of this section is to show the following result:
\begin{proposition}\label{DeltaCentral}
Let $n,m\in\N^*$. The element $\Delta\in \mathcal L_*^*(n,m)$ defined as in Definition \ref{DefwWDelta} is central.
\end{proposition}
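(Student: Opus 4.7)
My plan is to simplify $\Delta$ and then verify commutation with each generator $y, z, x_1, \ldots, x_n$ separately. \emph{Simplification.} Relation \eqref{NMPres3:3}, combined with $z\delta = \delta z$ (relation \eqref{NMPres3:4}), rewrites as $y \cdot w = w \cdot y$, where $w = z\delta^q x_1\cdots x_r$. Hence $W = wy = yw$, and since $w$ commutes with $y$,
\[
\Delta \;=\; w^{n'-r'} W^{r'} \;=\; w^{n'-r'}(wy)^{r'} \;=\; w^{n'-r'}w^{r'}y^{r'} \;=\; w^{n'}y^{r'}.
\]
Commutation of $\Delta$ with $y$ is then immediate.

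\emph{Commutation with each $x_i$.} From \eqref{NMPres3:2} with $j = i + 1$ I derive $x_i w = w x_{i+r}$ for $i \in \{1, \ldots, n-r\}$; from \eqref{NMPres3:1} (again with $j = i + 1$) together with $W = wy$ I derive $x_i w = wy x_{i+r-n} y^{-1}$ for $i \in \{n-r+1, \ldots, n\}$. Let $\sigma \in S_n$ be the shift $i \mapsto (i + r - 1 \bmod n) + 1$ and $\epsilon_i := \mathbf{1}_{i > n - r}$. Induction on $k$ shows
\[
w^{-k} x_i w^k \;=\; y^{E_k(i)} x_{\sigma^k(i)} y^{-E_k(i)}, \qquad E_k(i) := \sum_{j = 0}^{k - 1} \epsilon_{\sigma^j(i)}.
\]
Since $\gcd(n, r) = d$, $\sigma$ has order $n' = n / d$ and partitions $\{1, \ldots, n\}$ into $d$ orbits of size $n'$. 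As $\{n - r + 1, \ldots, n\}$ is a consecutive interval of size $r = r' d$, it meets every orbit in exactly $r'$ elements. Thus $E_{n'}(i) = r'$ for every $i$, giving $w^{-n'} x_i w^{n'} = y^{r'} x_i y^{-r'}$, equivalently $x_i \cdot w^{n'} y^{r'} = w^{n'} y^{r'} \cdot x_i$.

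\emph{Commutation with $z$.} This is the main obstacle. My plan is to establish the dual identity $\Delta = \delta^{m'} z^{n'}$, from which commutation with $z$ is immediate via \eqref{NMPres3:4}. The identity is proved by induction on $n'$, working with the form $w = z\delta^q x_1 \cdots x_r$: in the expansion $w^{k+1} = w^k \cdot w$ one pushes the new $x_j$'s through $w^k$ using the conjugation relations of the previous paragraph, and each wraparound (each time the shifted index exceeds $n$) conjugates the corresponding $x$-generator by $y$. After $n'$ iterations there are exactly $r'$ wraparounds; the accumulated $y$-twists cancel with the factor $y^{r'}$ in $\Delta = w^{n'} y^{r'}$, the $x$-generators recombine via $x_1 \cdots x_n = \delta y^{-1}$ into $\delta^{r'}$, and the commuting $z, \delta$ factors yield $z^{n'} \delta^{n'q + r'} = z^{n'} \delta^{m'}$ (using $m = qn + r$, hence $m' = q n' + r'$). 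The delicate point is the careful bookkeeping of these $y$-twists and their cancellation against the accumulated $x$-products; it is illustrated transparently in the small cases $n = 2, m = 3$ (where a direct computation gives $w^2 = z^2 \delta^3 y^{-1}$, so $\Delta = w^2 y = z^2 \delta^3$) and $n = 3, m = 4$ (where $\Delta = w^3 y = z^3 \delta^4$), and generalising this induction is the crux of the proof.
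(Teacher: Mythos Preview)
Your argument for commutation with $y$ and with each $x_i$ is correct and essentially coincides with the paper's: the orbit-counting observation that the shift $\sigma$ has order $n'$ and that each $\sigma$-orbit meets $\{n-r+1,\ldots,n\}$ in exactly $r'$ points is precisely the content of the clause ``we use that $x_{i+n'r}=x_i$'' in the paper's proof (which defers to \cite{BraidsJIgor}). The paper likewise reduces commutation with $z$ to the identity $\Delta=\delta^{m'}z^{n'}$, so your overall strategy matches.

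The gap is your proof of that identity. ``Induction on $n'$'' is not meaningful as stated ($n'$ is fixed), and the claimed bookkeeping --- how the $y$-twists from wraparounds cancel exactly against $y^{r'}$ and how the residual $x$-products recombine into $\delta^{r'}$ --- is asserted, illustrated in two small cases, and then declared ``the crux of the proof'' without being carried out. The paper treats this identity as a standalone lemma (Lemma~\ref{DeltaEqProduct}) with a different and fully explicit argument: it builds a concrete word $w_{n'-1}$ by concatenating carefully chosen representatives of $w$ and $W$, then shows by a length count that this word simultaneously represents $w^{n'-r'}W^{r'}$ and $\delta^{m'}z^{n'}$.

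For centrality \emph{alone}, however, you are working harder than necessary. Once you know $\Delta=w^{n'}y^{r'}$ commutes with $y$ and with each $x_i$, it commutes with $\delta=x_1\cdots x_ny$; and it commutes with $w$ trivially since $wy=yw$. But from $w=z\delta^qx_1\cdots x_r$ one has $z=w(x_1\cdots x_r)^{-1}\delta^{-q}$, so commutation with $z$ follows immediately --- the identity $\Delta=\delta^{m'}z^{n'}$ is not needed here at all.
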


In order to prove Proposition \ref{DeltaCentral}, we need the following two lemmas, whose proof are identical to that of \cite[Lemmas 3.9 and 3.10]{BraidsJIgor} since the fact that $n$ and $m$ are coprime is not used in their proof:

\begin{lemma}[{\cite[Lemma 3.9]{BraidsJIgor}}]\label{xvw}
Let $n,m\in\N^*$ and $i\in [n-r]$, where $m=qn+r$ with $0\leq q$ and $0\leq r\leq n-1$. Let $w$ be as in Definition \ref{DefwWDelta}. Then $x_iw=wx_{i+r}$. Moreover, we have $yw=wy$.
\end{lemma}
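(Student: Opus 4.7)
The plan is to exploit the multiple representations of $w$ afforded by Definition \ref{DefwWDelta}, which come from relations \eqref{NMPres3:2}, \eqref{NMPres3:3} and \eqref{NMPres3:4} of Presentation \eqref{NMPres3}, together with the identity $\delta = x_1 \cdots x_n y$.

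For the equality $yw = wy$, I would specialise the second family of representations at $i=1$. This gives
\begin{equation*}
w = x_1 \cdots x_n \cdot y \cdot z \delta^{q-1} x_1 \cdots x_r = \delta\, z \delta^{q-1} x_1 \cdots x_r.
\end{equation*}
With this expression for $w$, relation \eqref{NMPres3:3} reads precisely $yw = wy$, which settles this part at once.

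For the equality $x_i w = w x_{i+r}$ with $i \in [n-r]$, the plan is to use two expressions for $w$ simultaneously. Since $i \in [n-r]$, both $i$ and $i+1$ lie in $[n-r+1]$, so \eqref{NMPres3:2} supplies the pair of equalities
\begin{equation*}
w = x_i \bigl(x_{i+1} \cdots x_n y z \delta^{q-1} x_1 \cdots x_{i+r-1}\bigr) = \bigl(x_{i+1} \cdots x_n y z \delta^{q-1} x_1 \cdots x_{i+r-1}\bigr) x_{i+r}.
\end{equation*}
Writing $u := x_{i+1} \cdots x_n y z \delta^{q-1} x_1 \cdots x_{i+r-1}$, so that $w = x_i u = u x_{i+r}$, one gets $x_i w = x_i (u x_{i+r}) = (x_i u) x_{i+r} = w x_{i+r}$ by simple associativity.

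The only subtlety is the bookkeeping check that the second index $i+1$ remains in the admissible range $[n-r+1]$, which is exactly the hypothesis $i \leq n-r$. No substantial obstacle is anticipated: the lemma reduces to a direct manipulation of the chosen presentation of $\mathcal L_*^*(n,m)$.
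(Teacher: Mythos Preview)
Your proposal is correct and follows the natural approach: the lemma is an immediate consequence of the multiple word representatives for $w$ given in Definition \ref{DefwWDelta}, which is exactly how the paper (via the cited reference \cite[Lemma 3.9]{BraidsJIgor}) proceeds. Your bookkeeping is right: for $i\in[n-r]$ both indices $i$ and $i+1$ lie in $[n-r+1]$, so the two representatives $w=x_iu$ and $w=ux_{i+r}$ are available, and for the commutation $yw=wy$ the $i=1$ representative together with relation \eqref{NMPres3:3} (which the paper derives from \eqref{NMPres3:1} and \eqref{NMPres3:2} and hence holds in $\mathcal L_*^*(n,m)$) gives the result.
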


\begin{lemma}[{\cite[Lemma 3.10]{BraidsJIgor}}]\label{xvW}
Let $n,m\in\N^*$ and $k\in [r]$, where $m=qn+r$ with $0\leq q$ and $0\leq r\leq n-1$. Let $W$ be as in Definition \ref{DefwWDelta}. Then $x_{n-r+k}W=Wx_{k}$. Moreover, we have $yW=Wy$.
\end{lemma}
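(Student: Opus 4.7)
The approach will be to exploit the multiple representations of $W$ provided by relation \eqref{NMPres3:1} together with the already-established Lemma \ref{xvw} and the definition $W = wy$.

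First I would dispose of the second assertion $yW = Wy$ in one line: since $W = wy$ by Definition \ref{DefwWDelta} and $yw = wy$ by Lemma \ref{xvw}, we have $yW = ywy = wyy = Wy$. This is essentially automatic.

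The main content is the identity $x_{n-r+k}W = Wx_k$ for $k \in [r]$. Here the plan is to use two different words representing $W$, both coming from the family $\{x_i\cdots x_n y z \delta^q x_1 \cdots x_{i+r-n-1}\}_{i \in \llbracket n-r+1, n+1\rrbracket}$ that follows from relations \eqref{NMPres3:1}. Specifically, for $k \in [r]$ the two indices $i = n-r+k$ and $i = n-r+k+1$ both lie in $\llbracket n-r+1, n+1\rrbracket$, giving the equalities
\begin{equation*}
W = x_{n-r+k}\cdots x_n\, y z\, \delta^q\, x_1 \cdots x_{k-1} = x_{n-r+k+1}\cdots x_n\, y z\, \delta^q\, x_1 \cdots x_k.
\end{equation*}
Multiplying the first representation on the right by $x_k$ and the second on the left by $x_{n-r+k}$ then gives the identical word
\begin{equation*}
x_{n-r+k}\, x_{n-r+k+1}\cdots x_n\, yz\, \delta^q\, x_1 \cdots x_k,
\end{equation*}
which proves $W x_k = x_{n-r+k} W$.

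The only bookkeeping subtlety, and the one small obstacle worth flagging, is the boundary case $k = r$, where the index $i = n+1$ forces the product $x_{n-r+k+1}\cdots x_n$ to be empty and $W$ reads $yz\delta^q x_1\cdots x_r = yw$; this is still consistent with $W = wy$ via Lemma \ref{xvw}. Similarly for $k=1$ the left-hand representation starts at $i = n-r+1$ with an empty $x_1\cdots x_{k-1}$ factor. Once these degenerate slices are interpreted as empty words, the argument goes through uniformly, and I would note explicitly that no coprimality assumption on $n,m$ enters anywhere, so this verbatim reproduces \cite[Lemma 3.10]{BraidsJIgor}.
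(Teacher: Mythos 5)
Your proof is correct, and it is the intended argument: the paper itself does not spell out a proof but defers to \cite[Lemma 3.10]{BraidsJIgor}, noting only that coprimality of $n$ and $m$ is never used, which is exactly the observation you make. Comparing the two adjacent representations of $W$ indexed by $i=n-r+k$ and $i=n-r+k+1$ from the family $\{x_i\cdots x_ny z\delta^qx_1\cdots x_{i+r-n-1}\}_{i\in\llbracket n-r+1,n+1\rrbracket}$ (equal by \eqref{NMPres3:1}), together with $yw=wy$ from Lemma \ref{xvw} for the second assertion, is precisely the standard computation, and your handling of the empty boundary factors at $k=1$ and $k=r$ is correct.
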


\begin{remark}\label{Wr=0}
If $n=m=1$, we have $r=0$ so that the statement of Lemma \ref{xvW} is vacuously true.
\end{remark}

\begin{lemma}\label{DeltaEqProduct}
Let $n,m\in\N^*$ and let $\Delta$ be as in Definition \ref{DefwWDelta}. Then $\Delta=\delta^{m'}z^{n'}$.
\end{lemma}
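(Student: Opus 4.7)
The plan is to chain together distinct representations of $w$ and $W$ so that the product $w^{n'-r'}W^{r'}$ telescopes to $\delta^{m'}z^{n'}$. First, I would observe that $w$ and $W$ commute: using $W = wy$ and $yw = wy$ (Lemma \ref{xvw}), one has $wW = w\cdot wy = wy\cdot w = Ww$, so $w^{n'-r'}W^{r'}$ is independent of the ordering of its factors. Next, recall that by Definition \ref{DefwWDelta} (via relations \eqref{NMPres3:1}, \eqref{NMPres3:2} and \eqref{NMPres3:4}), $w$ has the representations $w = x_i\cdots x_n yz\delta^{q-1}x_1\cdots x_{i+r-1}$ for $i\in[n-r+1]$, and $W$ has the representations $W = x_i\cdots x_n yz\delta^q x_1\cdots x_{i+r-n-1}$ for $i\in\llbracket n-r+1,n+1\rrbracket$.

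Next, I would set up a cyclic walk on $[n]$: let $p_1 = 1$ and, for $k\geq 1$, set $p_{k+1} = p_k+r$ if $p_k+r\leq n$ (a \emph{non-wrap} step) and $p_{k+1}=p_k+r-n$ otherwise (a \emph{wrap} step). Since $\gcd(n,r)=d$ with $n=dn'$ and $r=dr'$, the walk returns to $1$ after exactly $n'$ steps, having wrapped around the cycle precisely $r'$ times (the total distance traversed is $n'r = nr'$, i.e.\ $r'$ full cycles). Writing $\epsilon_k\in\{0,1\}$ for the wrap indicator of step $k$, one has $\sum_{k=1}^{n'}\epsilon_k = r'$, so exactly $n'-r'$ steps are non-wrap. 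For each non-wrap step I select the $w$-representation at index $p_k$, and for each wrap step the $W$-representation at index $p_k$; these choices are admissible because $p_k+r\leq n$ forces $p_k\in[n-r+1]$ and $p_k+r>n$ forces $p_k\in\llbracket n-r+1,n\rrbracket$.

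I would then evaluate the product in chain order. At each junction between step $k$ and step $k+1$, the trailing block $x_1\cdots x_{p_k+r-\epsilon_k n-1}$ and the leading block $x_{p_{k+1}}\cdots x_n$ concatenate, since $p_{k+1} = p_k+r-\epsilon_k n$, to the full word $x_1\cdots x_n = \delta y^{-1}$, which then absorbs the leading $y$ of the $(k+1)$-st middle to leave a free factor of $\delta$ that commutes with $z$ by \eqref{NMPres3:4}. A direct induction on $k$ then shows that the partial product through step $k$ equals $z^k\delta^{\gamma_k}\cdot x_1\cdots x_{p_{k+1}-1}$, with $\gamma_k = kq + \sum_{j\leq k}\epsilon_j$: each non-wrap step contributes a factor $z\delta^q$ and each wrap step a factor $z\delta^{q+1}$. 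Specialising to $k = n'$, the closure condition $p_{n'+1}=1$ makes the trailing $x$-block empty and $\gamma_{n'} = n'q + r' = m'$, so the chained product is $z^{n'}\delta^{m'} = \delta^{m'}z^{n'}$, as desired.

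The main obstacle is simply keeping track of the walk and of the exponents; once the chain is laid out, every junction simplifies automatically from $x_1\cdots x_n = \delta y^{-1}$ and $z\delta=\delta z$. The degenerate case $r=0$ (which forces $r'=0$ and $n'=1$) must be treated separately but is immediate: $\Delta = w = z\delta^q$ and $\delta^{m'}z^{n'} = \delta^q z = z\delta^q$, so equality holds at once by \eqref{NMPres3:4}, without any reference to the walk or to Lemma \ref{xvW}.
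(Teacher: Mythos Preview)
The proposal is correct and follows essentially the same approach as the paper's proof: both chain together the different representations of $w$ and $W$ along the cyclic walk $p_{k+1}\equiv p_k+r\pmod n$, use the commutativity of $w$ and $W$ (via $W=wy$ and Lemma~\ref{xvw}) to reorder the factors, and count that exactly $r'$ of the $n'$ steps wrap in order to identify the chained product with $\Delta=w^{n'-r'}W^{r'}$. Your direct inductive computation of the partial products $z^{k}\delta^{\gamma_k}x_1\cdots x_{p_{k+1}-1}$ is slightly more streamlined than the paper's argument, which builds the same telescoping word and then recovers the exponents of $\delta$ and $z$ by a word-length count.
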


\begin{proof}
In this proof, we construct a word and show that this word both represents $\Delta$ and $\delta^{m'}z^{n'}$ in $\mathcal L_*^*(n,m)$.
For $l\in \llbracket0,n'-1\rrbracket$ define the word $w_l\in \{x_1,\dots,x_n,y,z,\delta,\delta^{-1}\}^*$ inductively as $w_0=x_1\cdots x_nyz\delta^{q}\delta^{-1}x_1\cdots x_r$ and 
\begin{equation*}w_{l+1}=\begin{cases}
    w_lx_{j+1}\cdots x_nyz\delta^{q}\delta^{-1}x_1\cdots x_{j+r} \, \text{if $w_l$ ends by $x_j$, \,$j\in [n-r-1]$,}\\
    w_lx_{n-r+1}\cdots x_nyz\delta^q, \, \text{if $w_l$ ends with $x_{n-r}$},
    \\
    w_lx_{j+1}\cdots x_nyz\delta^qx_1\cdots x_{j+r-n} \, \text{if $w_l$ ends by $x_j$, \, $j\in \llbracket n-r+1,n-1\rrbracket$}.
\end{cases}\end{equation*}
Note that with $w$ and $W$ defined as in Definition \ref{DefwWDelta}, the element represented by $w_{l+1}$ in $\mathcal L_*^*(n,m)$ is equal to $w_lw$ if $w_l$ ends with $x_j$, $j\in [n-r-1]$ and is equal to $w_lW$ otherwise.\\
\fbox{The word $w_{n'-1}$ represents $\Delta$ in $\mathcal L_*^*(n,m)$:}
Since indices are taken modulo $n$, if the last letter of $w_l$ is $x_j$ with $j\neq n-r$, the last letter of $w_{l+1}$ is $x_{j+r}$. Observe now that $r$ has order $n'$ in $\Z/n$, which shows that the only element $l$ of $\llbracket 0,n'-2\rrbracket$ such that $(l+1)r\equiv n-r \, (\text{mod} \, n)$ is $n'-2$. In particular, as the last letter of $w_0$ is $x_r$, we see that the last letter of $w_l$ is $x_{(l+1)r}$ for all $l=0,\dots,n-2$ and the last letter of $w_{n-1}$ is $\delta$. Moreover, since $n\wedge r=n\wedge m=d$, the subgroup of $\Z/n$ generated by $r$ is the subgroup of $\Z/n$ generated by $d$ so that the set of last letters of the words $w_0,\dots,w_{n'-2}$ is $\{x_{(l+1)d}\}_{l\in \llbracket 0,n'-2\rrbracket}=\{x_d,x_{2d}\dots,x_{(n'-1)d}\}$.
This shows that amongst the last letters of $w_0,\dots,w_{n-2}$, we have $|\{x_d,x_{2d},\dots,x_{(n'-r'-1)d}\}|=n'-r'-1$ elements belonging to $\{x_1,\dots,x_{n-r-1}\}$ and $|\{x_{(n'-r')d},\dots,x_{(n'-1)d}\}|=r'$ elements belonging to $\{x_{n-r},\dots,x_{n-1}\}$. Thus, for $l\in \llbracket 0,n'-2\rrbracket$ there are exactly $r'$ times where $w_{l+1}$ represents $w_lW$ in $\mathcal L_*^*(n,m)$ and $n'-r'-1$ times where $w_{l+1}$ represents $w_lw$ in $\mathcal L_*^*(n,m)$.\\
Using Lemma \ref{xvw}, the elements $w$ and $W=wy$ commute so that $w_{n'-1}$ represents $w_0w^{n'-r'-1}W^{r'}=w^{n'-r'}W^{r'}=\Delta$ since $w_0$ represents $w$ by Definition \ref{DefwWDelta}. It remains to show that $w_{n'-1}$ represents $\delta^{m'}z^{n'}$ in $\mathcal L_*^*(n,m)$.\\
\fbox{The word $w_{n'-1}$ represents $\delta^{m'}z^{n'}$ in $\mathcal L_*^*(n,m)$:}
Observe that whenever $w_l$ ends with $x_j$, $j\in [n-r-1]$, the word length (in terms of the alphabet $\{x_1,\dots,x_n,y,z,\delta,\delta^{-1}\}$) of $w_{l+1}$ is equal to that of $w_l$ plus $n+r+q+3$. Otherwise, it is equal to that of $w_l$ plus $r+q+2$. Since the length of $w_0$ is $n+q+r+3$, we get that $w_{n'-1}$ is a word starting by $x_1$ of length 
\begin{equation}\label{Length1}
    \ell(w_{n-1})=(n+r+q+3)+(n'-r'-1)(n+r+q+3)+r'(r+q+2)=n'q+3n'+nn'-r',
\end{equation}
with exactly $n'$ occurrences of $z$, each of which comes after a $y$ and before a power of $\delta$. Removing from $w_{n'-1}$ every instance of the letters $\delta,\delta^{-1}$ and $z$ leaves a word $\tilde w_{n'-1}$ consisting of a succession of the word $x_1\cdots x_ny$. For any word $m$ and letter $x$, denote by $|m|_x$ the number of instances of $x$ in $m$. With this notation, we have
\begin{equation}\label{Length2}
|w_{n'-1}|_{\delta}=q(n'-r')+qr'=qn',\, |w_{n'-1}|_{\delta^{-1}}=n'-r',\, |w_{n'-1}|_z=(n'-r')+r'=n'.
\end{equation}
Combining equations \eqref{Length1} and \eqref{Length2}, the word $\tilde w_{n'-1}$ obtained from $w_{n'-1}$ after removing every instances of the letters $\delta,\delta^{-1}$ and $z$ has length 
\begin{equation}\label{Length3}
n'q+3n'+nn'-r'-qn'-(n'-r')-n'=n'(n+1).    
\end{equation}
Since $\ell(x_1\cdots x_ny)=n+1$, the word $\tilde w_{n'-1}$ is a succession of $n'$ copies of $x_1\cdots x_ny$. Finally, by $\eqref{BraidPresDef:1}$ the elements $z$ and $\delta=x_1\cdots x_ny$ commute so that the element represented by $w_{n'-1}$ in $\mathcal L_*^*(n,m)$ is $\delta^{n'+qn'-(n'-r')}z^{n'}=\delta^{qn'+r'}z^{n'}=\delta^{m'}z^{n'}$ since $qn'+r'=m'$. This concludes the proof.
\end{proof}

\begin{proof}[Proof of Proposition \ref{DeltaCentral}]
The proof is the same as that of \cite[Proposition 3.8]{BraidsJIgor}, with the only exception coming from the fact that we use that for all $i\in[n]$, we have $x_{i+n'r}=x_i$ instead of merely $x_{i+nr}=x_i$.
\end{proof}

\begin{remark}[Notation]\label{Remark/Notation}
The image of $\Delta$ under any of the quotients in Square \eqref{SquareLinks} is central by Proposition \ref{DeltaCentral}. In any of the groups $\mathcal L_*(n,m)$, $\mathcal L^*(n,m)$ and $\mathcal L(n,m)$, we still call $\Delta$ the resulting element in the quotient. Moreover, using the fact that $\Delta=\delta^{m'}z^{n'}$ we have the following:\\
(i) In $\B_*(n,m)$, the element $\Delta$ is $(x_1\cdots x_ny)^{m'}$.\\
(ii) In $\B^*(n,m)$, the element $\Delta$ is $(x_1\cdots x_n)^{m'}z^{n'}$.\\
(iii) In $\B(n,m)$, the element $\Delta$ is $(x_1\cdots x_n)^{m'}$. Note that in this case, the element $\Delta$ generates the center of $G(n,m)$ (see \cite[Corollary 2.11]{GarnierHoso}).
\end{remark}

We have the following result:
\begin{lemma}\label{SmallGen}
The $d+2$ elements $x_1,\dots,x_d,x_{d+1}\cdots x_ny$ and $z$ generate $\mathcal L_*^*(n,m)$.
\end{lemma}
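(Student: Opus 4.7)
The plan is to let $H = \langle x_1,\dots,x_d, u, z\rangle$ and show $H = \mathcal L_*^*(n,m)$ by producing $x_{d+1},\dots,x_n, y \in H$. First I would observe the obvious: $\delta = x_1\cdots x_n y = (x_1\cdots x_d)\cdot u \in H$, hence $\delta^{\pm k}\in H$ for all $k$, and so the central element $\Delta = \delta^{m'}z^{n'}$ of Proposition \ref{DeltaCentral} belongs to $H$.

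Next, the key reduction is that once both $w$ and $W$ from Definition \ref{DefwWDelta} are shown to lie in $H$, Lemmas \ref{xvw} and \ref{xvW} finish the job. Indeed, conjugation by $w^{-1}$ implements the shift $x_i \mapsto x_{i+r}$ on $i\in[n-r]$, and conjugation by $W^{-1}$ implements the complementary wrap-around shift $x_{n-r+k}\mapsto x_k$ for $k\in[r]$. Since $\gcd(r,n)=d$, the combined operation realises the full shift-by-$r$ modulo $n$ on $\{x_1,\dots,x_n\}$, whose orbits are precisely the cosets of $d\mathbb Z$ in $\mathbb Z/n$, each meeting $\{1,\dots,d\}$ in exactly one point. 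Iterating $w$- and $W$-conjugation starting from $\{x_1,\dots,x_d\}$ therefore recovers every $x_i$, after which $y = (x_1\cdots x_n)^{-1}\delta \in H$.

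So the real work is proving $w, W \in H$. My approach is to exploit the multiple expressions from Definition \ref{DefwWDelta} and choose the index $i=d+1$, so that $u_{d+1}=u$ appears explicitly: this yields $w = u\,z\delta^{q-1}x_1\cdots x_{d+r}$ (and analogously for $W$ in the regime where $d+1\in[n-r+1,n+1]$). Combined with the substitution $\delta=(x_1\cdots x_d)u$, the commutation $\delta z=z\delta$, the commutativity $[w,W]=1$ (which follows from $W=wy=yw$, by Lemmas \ref{xvw}–\ref{xvW}), and the centrality of $\Delta=w^{n'-r'}W^{r'}$, one can collapse these expressions into elements of $H$. The two boundary cases are transparent: when $r'=1$ (i.e.\ $r=d$), the canonical $w=z\delta^q x_1\cdots x_d$ is manifestly in $H$, and when $r'=n'-1$ (i.e.\ $r=n-d$), the expression $W=uz\delta^q$ obtained from $i=d+1=n-r+1$ is manifestly in $H$; in each case the other of $w,W$ follows from the relation $W=wy$ and the recovery of $y$.

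The hard part — the place I expect to have to grind — is the intermediate cases where $r'\in(1,n'-1)$ is coprime to $n'$ (for example $n=10$, $m=4$, where neither boundary applies). There, neither canonical formula lands in $H$ on its own, and one must use the chain of alternative expressions in Definition \ref{DefwWDelta} together with the commutation relations to bootstrap: expressing $w$ via $i=d+1$ introduces unknown factors $x_{d+1}\cdots x_{d+r}$, and the role of the centrality $\Delta\in H$ combined with $w^{n'}y^{r'}=\Delta$ (from $W=wy$ and $\Delta=w^{n'-r'}W^{r'}$) is to provide the extra constraint tying the unknown block-products back to elements already available in $H$.
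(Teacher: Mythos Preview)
Your strategy of conjugating $x_1,\dots,x_d$ by $w$ and $W$ to reach all $x_i$ is natural, but it has a genuine gap that the boundary cases hide: in general neither $w$ nor $W$ lies in $H=\langle x_1,\dots,x_d,u,z\rangle$. The relations you invoke---$\Delta=w^{n'-r'}W^{r'}\in H$, $W=wy$, and $[w,y]=1$---only yield $w^{n'}y^{r'}\in H$, and since $y$ itself is not yet available (recovering $y$ requires all the $x_i$), you cannot extract $w$ from this. In your own example $n=10$, $m=4$ (so $d=2$, $r=4$, $q=0$), every expression for $w$ from Definition~\ref{DefwWDelta} contains a factor such as $x_3x_4$ or $x_3\cdots x_6$ that is not visibly in $H$, and the chain of equalities among these expressions does not unwind to an element of $H$. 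The ``grind'' you anticipate is not merely tedious; as stated, the available constraints are genuinely insufficient.

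The paper sidesteps this difficulty by changing both the objects being transported and the conjugators. Instead of the $x_i$, it works with $X_i:=x_1\cdots\widehat{x_i}\cdots x_ny$; for $i\le d$ this equals $x_1\cdots x_{i-1}x_{i+1}\cdots x_d\cdot u$, manifestly in $H$. The defining relations \eqref{BraidPresDef:2}--\eqref{BraidPresDef:3}, after left- and right-multiplication by suitable prefixes and suffixes, become
\[
X_i\,z\delta^q=z\delta^q\,X_{i+r}\quad(i\in[n-r]),\qquad X_i\,z\delta^{q+1}=z\delta^{q+1}\,X_{i+r-n}\quad(i\in[n-r+1,n]).
\]
Now the conjugators $z\delta^q$ and $z\delta^{q+1}$ are visibly in $H$, so the shift-by-$r$ argument you had in mind runs cleanly for the $X_i$: since $n\wedge r=d$, every $X_j$ lies in $H$. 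A separate short lemma (Lemma~\ref{GenerateXi}) shows that $\{X_1,\dots,X_n,z,x_1\}$ already generates $\mathcal L_*^*(n,m)$, completing the proof. The moral is that the ``product with one letter omitted'' elements $X_i$ are better adapted to the presentation than the $x_i$ themselves: they admit both accessible base cases \emph{and} accessible conjugators, whereas your approach secures the base cases but not the conjugators.
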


For all $i\in[n]$ we introduce the element $X_i:=x_1\cdots \hat x_i\cdots x_ny\in \mathcal L_*^*(n,m)$, with the convention that $X_{i+n}=X_i$ for all $i\in \Z$. 

\begin{lemma}\label{GenerateXi}
The set $\{X_1,\dots,X_n,z,x_1\}\subset \mathcal L_*^*(n,m)$ generates $\mathcal L_*^*(n,m)$.
\end{lemma}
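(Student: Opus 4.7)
The plan is to deduce the result from Lemma \ref{SmallGen}. Since both $z$ and $x_1$ already lie in the candidate subgroup $H := \langle X_1,\dots,X_n,z,x_1\rangle$, Lemma \ref{SmallGen} reduces matters to showing that $x_2,\dots,x_d$ and the element $x_{d+1}\cdots x_n y$ all belong to $H$. In fact, I will prove the stronger statement that every individual $x_i$ (for $i\in[n]$) and $y$ itself lie in $H$.

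The key identity is the factorisation $\delta = x_1 X_1$, where $\delta := x_1\cdots x_n y$; this immediately gives $\delta\in H$. More generally, using the explicit form $X_i = x_1\cdots x_{i-1}x_{i+1}\cdots x_n y$, a short computation yields
$$\delta X_i^{-1} = (x_1\cdots x_{i-1})\,x_i\,(x_1\cdots x_{i-1})^{-1},$$
so that
$$x_i = (x_1\cdots x_{i-1})^{-1}\,\delta X_i^{-1}\,(x_1\cdots x_{i-1}).$$

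From here, a straightforward induction on $i\in[n]$ yields $x_i\in H$: the base case $i=1$ is the hypothesis $x_1\in H$, and given $x_1,\dots,x_{i-1}\in H$ together with $\delta,X_i\in H$, the identity above shows $x_i\in H$. Once all $x_i$ lie in $H$, we obtain $y = (x_1\cdots x_n)^{-1}\delta\in H$ as well, and invoking Lemma \ref{SmallGen} concludes the proof. I do not anticipate any substantial obstacle; the statement is essentially a computational refinement of Lemma \ref{SmallGen}. The only subtle point worth flagging is that the generator $x_1$ in the statement is necessary as an anchor for the induction: without it, neither $\delta$ nor any single $x_i$ could be extracted from the $X_i$'s and $z$ alone, so the recursion above would have no starting point.
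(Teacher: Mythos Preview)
Your inductive argument is correct and is essentially the same computation as the paper's: the paper also proves $x_i\in\langle X_1,\dots,X_n,x_1\rangle$ by induction via the identity $x_{i+1}=(x_1\cdots x_i)^{-1}x_1X_1X_{i+1}^{-1}(x_1\cdots x_i)$, which is your identity with $\delta$ written out as $x_1X_1$, and then recovers $y$ from the $x_i$'s and $X_1$.

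There is, however, a logical issue with your framing. In the paper, Lemma~\ref{SmallGen} is proved \emph{using} Lemma~\ref{GenerateXi}, not the other way around: the proof of Lemma~\ref{SmallGen} begins by invoking Lemma~\ref{GenerateXi} to reduce to showing $\{X_1,\dots,X_n\}\subset\langle x_1,\dots,x_d,x_{d+1}\cdots x_ny,z\rangle$. So your appeal to Lemma~\ref{SmallGen} at the end is circular. Fortunately this is harmless, because the ``stronger statement'' you actually establish---that every $x_i$ and $y$ lie in $H$---already finishes the proof on its own: together with $z\in H$, these are precisely the generators of $\mathcal L_*^*(n,m)$ in Presentation~\eqref{BraidPres}, so $H=\mathcal L_*^*(n,m)$ without any need for Lemma~\ref{SmallGen}. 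Simply drop both references to Lemma~\ref{SmallGen} and conclude directly.
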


\begin{proof}
We show by induction on $i\in [n]$ that $\{x_1,\dots,x_i\}\subset \langle X_1,\dots,X_n,x_1\rangle$.\\
For $i=1$, this is trivial. Assume then that $\{x_1,\dots,x_i\}\subset\langle X_1,\dots,X_n,x_1\rangle$ for some $i\in [n-1]$. Then we have 
\begin{equation*}
    \begin{aligned}
        x_{i+1}&=(x_1\cdots x_i)^{-1}(x_1\cdots x_ny)(x_{i+2}\cdots x_ny)^{-1}(x_1\cdots x_i)^{-1}(x_1\cdots x_i)\\&=(x_1\cdots x_i)^{-1}x_1X_1X_{i+1}^{-1}(x_1\cdots x_i),
    \end{aligned}
\end{equation*}
which shows that $x_{i+1}\in \langle x_1,\dots,x_i,X_1,X_{i+1}\rangle\subset\langle X_1,\dots,X_n,x_1\rangle$ by induction hypothesis.\\
We get that $\{x_1,\dots,x_n,z\}\subset \langle X_1,\dots,X_n,z,x_1\rangle$. Since $y=(x_2\cdots x_n)^{-1}X_1$, this shows that 
\begin{equation*}\langle X_1,\dots,X_n,z,x_1\rangle=\mathcal L_*^*(n,m),\end{equation*} which concludes the proof.
\end{proof}

\begin{proof}[Proof of Lemma \ref{SmallGen}]
Using Lemma \ref{GenerateXi}, it is enough to show that $\{X_1,\dots,X_n\}\subset \langle x_1,\dots,x_d,x_{d+1}\cdots x_ny,z\rangle$ to conclude the proof.\\
First, one sees that $\delta=x_1\cdots x_d(x_{d+1}\cdots x_ny)\in \langle x_1,\dots,x_d,x_{d+1}\cdots x_ny,z\rangle$. Now, observe that for all $i\in [n-r]$, the relation 
\begin{equation*}
    x_{i+1}\cdots x_nyz\delta^{q-1}x_1\cdots x_{i+r}=x_i\cdots x_nyz\delta^{q-1}x_1\cdots x_{i+r-1}
\end{equation*}
coming from \eqref{BraidPresDef:2} yields 
\begin{equation*}
\begin{aligned} &(x_1\cdots x_{i-1})(x_{i+1}\cdots x_nyz\delta^{q-1}x_1\cdots x_{i+r})(x_{i+r-1}\cdots x_ny)\\ &= (x_1\cdots x_{i-1})(x_i\cdots x_nyz\delta^{q-1}x_1\cdots x_{i+r-1})(x_{i+r-1}\cdots x_ny).
\end{aligned}\end{equation*}
Using \eqref{BraidPresDef:1}, the right hand side is equal to $z\delta^qX_{i+r}$, hence we get $X_iz\delta^q=z\delta^qX_{i+r}$. Similarly, for $i\in \llbracket n-r+1,n\rrbracket$ the relation 
\begin{equation*}
    x_{i+1}\cdots x_nyz\delta^qx_1\cdots x_{i+r-n}=x_i\cdots x_nyz\delta^qx_1\cdots x_{i+r-n-1}
\end{equation*}
coming from \eqref{BraidPresDef:3} yields $X_iz\delta^{q+1}=z\delta^{q+1}X_{i+r-n}$.\\Since $X_{i+r-n}=X_{i+r}$ and $\{z\delta^q,z\delta^{q+1}\}\subset \langle x_1,\dots,x_d,x_{d+1}\cdots x_ny,z\rangle,$ we see that for all $i\in [n]$, the element $X_i$ belongs to $\langle x_1,\dots,x_d,x_{d+1}\cdots x_ny,z\rangle$ if and only if $X_{i+r}$ does. Since $n\wedge r=d$, we have $\{X_1,\dots,X_n\}\subset \langle x_1,\dots,x_d,x_{d+1}\cdots x_ny,z\rangle$ if and only if $\{X_1,\dots,X_d\}\subset \langle x_1,\dots,x_d,x_{d+1}\cdots x_ny,z\rangle$, which holds since for all $i\in[d]$ we have $X_i=x_1\cdots x_{i-1}x_{i+1}\cdots x_d(x_{d+1}\cdots x_ny)$. This concludes the proof.
\end{proof}
In order to define the isomorphisms, recall the following from \cite{BraidsJIgor}:

\begin{definition}
Let $n,m\in \N^*$ be two coprime integers. If $m\neq 1$, define $n_{(m)}$ to be the unique integer in $[m-1]$ such that $\overline{nn_{(m)}}=1\in (\Z/m)^\times$. If $m=1$, define $n_{(m)}$ to be 1.
\end{definition}

\begin{lemma}[{\cite[Lemma 3.23]{BraidsJIgor}}]\label{CoprimeLemma}
Let $n,m\in \N^*$ be two coprime integers. Then we have $$nn_{(m)}+mm_{(n)}=nm+1.$$
\end{lemma}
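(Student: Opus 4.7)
The plan is to use the Chinese Remainder Theorem together with size bounds. The definition of $n_{(m)}$ gives $n n_{(m)} \equiv 1 \pmod m$ and of $m_{(n)}$ gives $m m_{(n)} \equiv 1 \pmod n$ (in the degenerate case where one of them equals $1$, these congruences still hold trivially, and one can also verify the statement directly: if $m=1$ then $n n_{(m)} + m m_{(n)} = n + 1 = nm+1$, and symmetrically if $n=1$). So from now on assume $n,m \geq 2$.

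First I would reduce modulo $n$ and modulo $m$ separately. Since $m m_{(n)} \equiv 0 \pmod m$ and $n n_{(m)} \equiv 1 \pmod m$ we get $n n_{(m)} + m m_{(n)} \equiv 1 \pmod m$, and symmetrically $n n_{(m)} + m m_{(n)} \equiv 1 \pmod n$. Since $n \wedge m = 1$, the Chinese Remainder Theorem yields
\begin{equation*}
n n_{(m)} + m m_{(n)} \equiv 1 \pmod{nm}.
\end{equation*}
So the quantity equals $1 + k\, nm$ for some integer $k \geq 0$ (it is positive since all summands are positive).

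The remaining step is to pin down $k = 1$ by size estimates. Because $n_{(m)} \in \{1,\dots,m-1\}$ and $m_{(n)} \in \{1,\dots,n-1\}$, we have
\begin{equation*}
n n_{(m)} + m m_{(n)} \leq n(m-1) + m(n-1) = 2nm - n - m < 2nm,
\end{equation*}
which forces $k \leq 1$. On the other hand the sum is at least $n + m \geq 4 > 1$, ruling out $k=0$. Hence $k=1$ and $n n_{(m)} + m m_{(n)} = nm + 1$, as claimed. The only subtle point, which I would flag explicitly, is ensuring the degenerate cases $n=1$ or $m=1$ are handled by the convention in the definition of $n_{(m)}$; they are verified directly and cause no obstacle.
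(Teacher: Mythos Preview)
Your proof is correct. The paper itself does not prove this lemma; it merely cites it from \cite{BraidsJIgor}, so there is no in-paper argument to compare against. Your Chinese Remainder Theorem plus size-bound argument is clean and complete, and your handling of the degenerate cases $n=1$ or $m=1$ via the convention $n_{(1)}=1$ is exactly what is needed.
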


\begin{definition}
Let $a,b\in\N^*$. Define $[a]_b$ as the unique integer in $\{1,\dots,b\}$ such that $a\equiv [a]_b \, (\mathrm{mod} \, b)$.
\end{definition}

Recall that given $n,m\in\N^*$, the element $\alpha\in G(n,m)$ is defined as $a_1\cdots a_n$.
\begin{proposition}\label{LtoG(d+2,d+2)}
Let $n,m\in \N^*$ with $d=n\wedge m$ and write $m=qn+r$ with $q\geq 0$ and $0\leq r\leq n-1$. Also write $m=dm'$ and $n=dn'$. The map
\[
\begin{array}{rcl}
\{x_1,\dots,x_n,y,z\}&\xto\varphi &G(d+2,d+2) \\ 
x_i & \mapsto &a_1^{\lceil\frac id\rceil-1}a_{[i]_d+2}a_1^{1-\lceil\frac id\rceil}\\
y & \mapsto & a_1^{n'}\alpha^{m'_{(n')}-n'}\\
z&\mapsto & a_2^{m'}\alpha^{n'_{(m')}-m'}
\end{array}
\]
extends to a morphism $\mathcal L_*^*(n,m)\xto\varphi G(d+2,d+2)$.
\end{proposition}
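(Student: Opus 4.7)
To show that $\varphi$ extends to a morphism $\mathcal L_*^*(n,m) \to G(d+2,d+2)$, the plan is to verify that each of the defining relations \eqref{BraidPresDef:1}, \eqref{BraidPresDef:2}, \eqref{BraidPresDef:3} of $\B_*^*(n,m) \cong \mathcal L_*^*(n,m)$ (via Theorem \ref{NecklaceGroupsAreJBraidgroups}) is respected by $\varphi$. The structural fact that makes the whole computation tractable is that, by Theorem \ref{CenterCircularGroups}, the element $\alpha = a_1 \cdots a_{d+2}$ is central in $G(d+2,d+2)$, so powers of $\alpha$ may be pushed freely through any product.

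The first step is to compute $\varphi(\delta)$ in closed form. Writing $\beta := a_3 \cdots a_{d+2}$, the definition of $\varphi$ gives $\varphi(x_{(k-1)d+1} \cdots x_{kd}) = a_1^{k-1} \beta a_1^{1-k}$, so $\varphi(x_1 \cdots x_n)$ telescopes into $(\beta a_1)^{n'} a_1^{-n'}$; using the identity $\beta a_1 = a_2^{-1}\alpha$ and the centrality of $\alpha$, this simplifies to $a_2^{-n'} \alpha^{n'} a_1^{-n'}$. Multiplying by $\varphi(y) = a_1^{n'}\alpha^{m'_{(n')}-n'}$ then yields $\varphi(\delta) = a_2^{-n'} \alpha^{m'_{(n')}}$. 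Relation \eqref{BraidPresDef:1} is immediate from this, since $\varphi(\delta)$ and $\varphi(z) = a_2^{m'}\alpha^{n'_{(m')}-m'}$ both lie in the abelian subgroup generated by $a_2$ and the central $\alpha$.

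The heart of the proof is the verification of \eqref{BraidPresDef:2} and \eqref{BraidPresDef:3}. For \eqref{BraidPresDef:2}, I will set $E_i := \varphi(x_i \cdots x_n y z \delta^{q-1} x_1 \cdots x_{i+r-1})$ and show that $E_i$ does not depend on $i \in [n-r+1]$. Using $x_i \cdots x_n y = (x_1 \cdots x_{i-1})^{-1}\delta$ together with the already-verified relation \eqref{BraidPresDef:1}, the expression $E_i$ reduces to $\varphi(x_1 \cdots x_{i-1})^{-1} \cdot a_2^{r'} \alpha^{C} \cdot \varphi(x_1 \cdots x_{i+r-1})$ for an $i$-independent exponent $C$. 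The crucial identity $\varphi(x_{i+r}) = a_1^{r'} \varphi(x_i) a_1^{-r'}$, which follows directly from the definition of $\varphi$ since $r = dr'$ preserves residues modulo $d$, then drives a short induction: substituting it into $E_{i+1} = \varphi(x_i)^{-1} E_i \varphi(x_{i+r})$ causes the two $\varphi(x_i)^{\pm 1}$ factors to cancel against the $a_1^{-r'}$ appearing in $E_i$, yielding $E_{i+1} = E_i$. Relation \eqref{BraidPresDef:3} is handled identically, with the shift $r$ replaced by $r-n$ (equivalently, $r'$ by $r'-n'$).

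The principal obstacle I expect is the systematic bookkeeping of exponents of $a_1$, $a_2$ and $\alpha$; in particular, the interplay between $n'_{(m')}$, $m'_{(n')}$, $q$ and $r'$ in the $\alpha$-exponents must collapse correctly via Lemma \ref{CoprimeLemma}, which can be cross-checked against the expected equality $\varphi(\Delta) = \alpha$ predicted by Proposition \ref{DeltaCentral} combined with the fact that $\alpha$ generates the center of $G(d+2,d+2)$.
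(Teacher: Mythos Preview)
Your approach is correct and runs parallel to the paper's, but is organised differently in a way worth noting. Both proofs compute $\varphi(\delta)$ and then verify the three relations of Presentation~\eqref{BraidPres}. The paper keeps $\varphi(\delta)$ in the form $(a_3\cdots a_{d+2}a_1)^{n'}\alpha^{m'_{(n')}-n'}$ and then, for Relations~\eqref{BraidPresDef:2}--\eqref{BraidPresDef:3}, computes $E_i:=\varphi(x_i\cdots x_nyz\delta^{q-1}x_1\cdots x_{i+r-1})$ in closed form for \emph{every} $i$, obtaining $a_1^{-r'}\alpha^{q(m'_{(n')}-n')+n'_{(m')}}$ directly. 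You instead collapse $\varphi(\delta)$ to $a_2^{-n'}\alpha^{m'_{(n')}}$ (via $a_3\cdots a_{d+2}a_1=a_2^{-1}\alpha$), which makes Relation~\eqref{BraidPresDef:1} and the reduction $\varphi(\delta^q z)=a_2^{r'}\alpha^{C}$ immediate, and then use the conjugation identity $\varphi(x_{i+r})=a_1^{r'}\varphi(x_i)a_1^{-r'}$ to replace the paper's uniform computation by a short induction on $i$. This is a genuinely slicker packaging: it avoids the general partial-product formulas \eqref{eq1BtoG(d+2,d+2)}--\eqref{eq2BtoG(d+2,d+2)} and isolates exactly why the $i$-dependence disappears.

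One point deserves tightening. In the inductive step you write that the factors $\varphi(x_i)^{\pm 1}$ ``cancel against the $a_1^{-r'}$ appearing in $E_i$''; but from the form $E_i=P_{i-1}^{-1}\,a_2^{r'}\alpha^{C}\,P_{i+r-1}$ alone, no such tail is visible. What makes the cancellation go through is the \emph{base case}: computing $E_1=a_2^{r'}\alpha^{C}P_r$ with $P_r=(a_2^{-1}\alpha)^{r'}a_1^{-r'}$ gives $E_1=\alpha^{C+r'}a_1^{-r'}$, so $E_1a_1^{r'}$ is central; then $E_{i+1}=\varphi(x_i)^{-1}(E_ia_1^{r'})\varphi(x_i)a_1^{-r'}=E_i$ follows by induction. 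You should state this base computation explicitly --- it is precisely the single instance of the paper's direct calculation that your inductive method cannot bypass. The same remark applies verbatim to Relation~\eqref{BraidPresDef:3} with $r'$ replaced by $r'-n'$.
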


\begin{proof}
It is enough to show that the images of $x_1,\dots,x_n,y$ and $z$ by $\varphi$ satisfy the relations of Presentation \eqref{BraidPres}.  First, observe that for all $i\in [n]$, writing $i=ld+[i]_d$ we have 
\begin{equation}\label{eq1BtoG(d+2,d+2)}
\begin{aligned}
\varphi(x_1)\cdots\varphi(x_i)&=(\prod_{k=0}^{l-1}\prod_{j=1}^{d}\varphi(x_{kd+j}))\varphi(x_{ld+1})\cdots\varphi(x_{ld+[j]_d})\\
&=(\prod_{k=0}^{l-1}a_1^k(a_3\cdots a_{d+2})a_1^{-k})a_1^{l}(a_3\cdots a_{[i]_d+2})a_1^{-l}\\
&=(a_3\dots a_{d+2}a_1)^la_3\cdots a_{[i]_d+2}a_1^{-l},
\end{aligned}
\end{equation}
Similarly, we have
\begin{equation}\label{eq2BtoG(d+2,d+2)}
\begin{aligned}
\varphi(x_i)\cdots\varphi(x_n)&=\varphi(x_i)\cdots\varphi(x_{(l+1)d})(\prod_{k=l+1}^{n'-1}\prod_{j=1}^{d}\varphi(x_{kd+j}))\\
&=a_1^la_{[i]_d+2}\cdots a_{d+2}a_1^{-l}(\prod_{k=l+1}^{n'-1}a_1^k(a_3\cdots a_{d+2})a_1^{-k})\\
&=a_1^la_{[i]_d+2}\cdots a_{d+2}a_1(a_3\dots a_{d+2}a_1)^{n'-l-1}a_1^{-n'}.
\end{aligned}
\end{equation}

\noindent
\fbox{$x_1\cdots x_nyz=zx_1\cdots x_ny$:}\\
We have 
\begin{equation*}
\varphi(x_1)\cdots\varphi(x_n)\varphi(y)\underset{\eqref{eq1BtoG(d+2,d+2)}}=(a_3\cdots a_{d+2}a_1)^{n'}a_1^{-n'}a_1^{n'}\alpha^{m'_{(n')}-n'}
=(a_3\cdots a_{d+2}a_1)^{n'}\alpha^{m'_{(n')}-n'}.
\end{equation*}
From now on, by $\varphi(\delta)$ we mean $\varphi(x_1)\cdots \varphi(x_n)\varphi(y)$, which by the above then reads 
\begin{equation}\label{ImDelta}
\varphi(\delta)=(a_3\cdots a_{d+2}a_1)^{n'}\alpha^{m'_{(n')}-n'}.
\end{equation}
Since $a_2a_3\cdots a_{d+2}a_1=a_3\cdots a_{d+2}a_1a_2$ we have that $a_2$ commutes with $a_3\cdots a_{d+2}a_1$. Moreover, the element $\alpha$ is central so that $\varphi(\delta)$ commutes with $a_2$ but again since $\alpha$ is central, we get that $\varphi(\delta)$ commutes with $a_2^{m'}\alpha^{n'_{(m')}-m'}=\varphi(z)$. This shows that the relation $x_1\cdots x_nyz=zx_1\cdots x_ny$ is respected by $\varphi$. \\

\noindent
\fbox{$x_{i+1}\cdots x_{n}yz\delta^{q-1}x_1\cdots x_{i+r}=x_i\cdots x_{n}yz\delta^{q-1}x_1\cdots x_{i+r-1}$, $i\in [n-r]$:}\\
We show that the image by $\varphi$ of $x_i\cdots x_nyz\delta^{q-1}x_1\cdots x_{i+r-1}$ is the same for all $i\in [n-r+1]$. \\
In the following calculations, we use the fact that $\alpha$ is central without making explicit mention of it. Moreover, write $r=dr'$.\\
For all $i\in [n-r+1]$, writing $i=ld+[i]_d$ we have $i+r-1=(l+r')d+[i]_d-1$. Note that if $[i]_d=1$ then $i+r-1=(l+r')d$ and we still have
\begin{equation*}
\begin{aligned}
\varphi(x_1)\cdots\varphi(x_{i+r-1})&=(a_3\cdots a_{d+2}a_1)^{l+r'}a_1^{-l-r'}\\
&=(a_3\cdots a_{d+2}a_1)^{l+r'}a_3\cdots a_{[i]_d+1}a_1^{-l-r'},
\end{aligned}
\end{equation*}
where $a_3\cdots a_{[i]_d+1}=a_3\cdots a_2=1$ by convention. For all $i\in[n-r+1]$ we have:
\begin{equation}\label{LeftProd1(d+2,d+2)}
\begin{aligned}
&\varphi(x_{i})\cdots \varphi(x_n)\varphi(y)\varphi(z)\\
&\underset{\eqref{eq2BtoG(d+2,d+2)}}{=}a_1^la_{[i]_d+2}\cdots a_{d+2}a_1(a_3\cdots a_{d+2}a_1)^{n'-l-1}a_1^{-n'}a_1^{n'}\alpha^{m'_{(n')}-n'}a_2^{m'}\alpha^{n'_{(m')}-m'}\\
&=a_1^la_{[i]_d+2}\cdots a_{d+2}a_1(a_3\cdots a_{d+2}a_1)^{n'-l-1}a_2^{m'}\alpha^{m'_{(n')}+n'_{(m')}-m'-n'}.
\end{aligned}
\end{equation}
Similarly, we have
\begin{equation}\label{RightProd1(d+2,d+2)}
\begin{aligned}
&\varphi(\delta)^{q-1}\varphi(x_1)\cdots \varphi(x_{i+r-1})\\
&\underset{\eqref{eq1BtoG(d+2,d+2)}}{=}\varphi(\delta)^{q-1}(a_3\dots a_{d+2}a_1)^{l+r'}a_3\cdots a_{[i]_d+1}a_1^{-l-r'} \\
&\underset{\eqref{ImDelta}}{=}(a_3\cdots a_{d+2}a_1)^{n'(q-1)+l+r'}\alpha^{(m'_{(n')}-n')(q-1)}a_3\cdots a_{[i]_d+1}a_1^{-l-r'} \\
&=(a_3\cdots a_{d+2}a_1)^{m'-n'+l}\alpha^{(m'_{(n')}-n')(q-1)}a_3\cdots a_{[i]_d+1}a_1^{-l-r'} \text{  since $m'=qn'+r'$.} \\
\end{aligned}
\end{equation}
Since $a_2$ commutes with $a_3\cdots a_{d+2}a_1$ and $\alpha$ is central,  Equations \eqref{LeftProd1(d+2,d+2)} and \eqref{RightProd1(d+2,d+2)} imply that 
\begin{equation*}
    \begin{aligned}
        &\varphi(x_i)\cdots \varphi(x_n)\varphi(y)\varphi(z)\varphi(\delta)^{q-1}\varphi(x_1)\cdots \varphi(x_{i+r-1})\\
        &=a_1^la_{[i]_d+2}\cdots a_{d+2}a_1(a_3\cdots a_{d+2}a_1)^{m'-1}a_2^{m'}\alpha^{q(m'_{(n')}-n')+n'_{(m')}-m'}a_3\cdots a_{[i]_d+1}a_1^{-l-r'}\\
        &=a_1^la_{[i]_d+2}\cdots a_{d+2}a_1a_2\alpha^{q(m'_{(n')}-n')+n'_{(m')}-1}a_3\cdots a_{[i]_d+1}a_1^{-l-r'}\\
        &=a_1^la_{[i]_d+2}\cdots a_{d+2}a_1a_2a_3\cdots a_{[i]_d+1}\alpha^{q(m'_{(n')}-n')+n'_{(m')}-1}a_1^{-l-r'}\\
        &=a_1^{-r'}\alpha^{q(m'_{(n')}-n')+n'_{(m')}},
    \end{aligned}
\end{equation*}
which does not depend on $i$.\\ 
\noindent
\fbox{$x_{i+1}\cdots x_{n}yz\delta^{q}x_1\cdots x_{i+r-n}=x_i\cdots x_{n}yz\delta^{q}x_1\cdots x_{i+r-n-1}$, $i\in \llbracket n-r+1,n\rrbracket$:}\\
We show that the image by $\varphi$ of $x_{i+1}\cdots x_{n}yz\delta^{q}x_1\cdots x_{i+r-n}$ is the same for all $i\in \llbracket n-r,n\rrbracket$. \\
In the following calculations, we use the fact that $\alpha$ is central without making explicit mention of it. Moreover, write $r=dr'$.\\
For all $i\in \llbracket n-r,n\rrbracket$, writing $i=ld+[i]_d$ we have $i+r-n=(l+r'-n')d+[i]_d$. Note that for $i=n-r$ we have $i+r-n=0$ and we still have 
\begin{equation*}
    \begin{aligned}
        \varphi(x_1)\cdots \varphi(x_{i+r-n})&=(a_3\cdots a_{d+2}a_1)^0a_3\cdots a_2a_1^{-0}\\
        &=1.
    \end{aligned}
\end{equation*}
For all $i\in \llbracket n-r,n\rrbracket$ we have:
\begin{equation}\label{RightProd2(d+2,d+2)}
\begin{aligned}
&\varphi(\delta)^q\varphi(x_1)\cdots \varphi(x_{i+r-n})\\
&\underset{\eqref{eq1BtoG(d+2,d+2)}}{=}\varphi(\delta)^{q}(a_3\dots a_{d+2}a_1)^{l+r'-n'}a_3\cdots a_{[i]_d+1}a_1^{-l-r'+n'} \\
&\underset{\eqref{ImDelta}}{=}(a_3\cdots a_{d+2}a_1)^{qn'+l+r'-n'}\alpha^{q(m'_{(n')}-n')}a_3\cdots a_{[i]_d+1}a_1^{-l-r'+n'} \\
&=(a_3\cdots a_{d+2}a_1)^{m'+l-n'}\alpha^{q(m'_{(n')}-n')}a_3\cdots a_{[i]_d+1}a_1^{-l-r'+n'} \text{  since $m'=qn'+r'$.} \\
\end{aligned}
\end{equation}
Since $a_2$ commutes with $(a_3\cdots a_{d+2}a_1)$ and $\alpha$ is central,  Equations \eqref{LeftProd1(d+2,d+2)} and \eqref{RightProd2(d+2,d+2)} imply that 
\begin{equation*}
    \begin{aligned}
        &\varphi(x_{i+1})\cdots \varphi(x_n)\varphi(y)\varphi(z)\varphi(\delta)^{q}\varphi(x_1)\cdots \varphi(x_{i+r-n})\\
        &=a_1^la_{[i]_d+2}\cdots a_{d+2}a_1(a_3\cdots a_{d+2}a_1)^{m'-1}a_2^{m'}\alpha^{(q+1)(m'_{(n')}-n')+n'_{(m')}-m'}a_3\cdots a_{[i]_d+1}a_1^{-l-r'+n'}\\
        &=a_1^la_{[i]_d+2}\cdots a_{d+2}a_1a_2\alpha^{(q+1)(m'_{(n')}-n')+n'_{(m')}-1}a_3\cdots a_{[i]_d+1}a_1^{-l-r'+n'}\\
        &=a_1^la_{[i]_d+2}\cdots a_{d+2}a_1a_2a_3\cdots a_{[i]_d+1}\alpha^{(q+1)(m'_{(n')}-n')+n'_{(m')}-1}a_1^{-l-r'+n'}\\
        &=a_1^{n'-r'}\alpha^{(q+1)(m'_{(n')}-n')+n'_{(m')}},
    \end{aligned}
\end{equation*}
which does not depend on $i$. This concludes the proof.
\end{proof}

\begin{proposition}\label{G(d+2,d+2)toL}
Let $n,m\in \N^*$ with $d=n\wedge m$ and write $m=qn+r$ with $q\geq 0$ and $0\leq r\leq n-1$. Also write $m=dm'$ and $n=dn'$. The map 

\[
\begin{array}{rcl}
\{a_1,a_2,\dots,a_{d+2}\}&\xto\psi& \mathcal L_*^*(n,m) \\ 
a_1 & \mapsto &x_{d+1}\cdots x_nyz^{n'-m'_{(n')}}\delta^{n'_{(m')}-1}\\
a_2 &\mapsto &z^{m'_{(n')}}\delta^{m'-n'_{(m')}}\\
a_i&\mapsto & x_{i-2}\, \, \mathrm{for}\,\,i=3,\dots,d+2
\end{array}
\]
extends to a morphism $G(d+2,d+2)\xto\psi \mathcal L_*^*(n,m)$.
\end{proposition}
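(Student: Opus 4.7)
To prove that $\psi$ extends to a group homomorphism, it suffices to verify that $\psi(a_1), \ldots, \psi(a_{d+2})$ satisfy the defining relations of $G(d+2,d+2)$. Setting $P_i := \psi(a_i)\psi(a_{i+1})\cdots\psi(a_{i+d+1})$ with indices taken modulo $d+2$, Presentation~\eqref{PresCircular} applied to $n = m = d+2$ amounts to the chain of equalities $P_1 = P_2 = \cdots = P_{d+2}$.

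My strategy is to compute the single cyclic product $P_3$ explicitly, and then propagate the equality to all others using the centrality of $\Delta$ (Proposition~\ref{DeltaCentral}). Since $\psi(a_i) = x_{i-2}$ for $i \in \{3, \ldots, d+2\}$, we have $\psi(a_3) \cdots \psi(a_{d+2}) = x_1 \cdots x_d$, so $P_3 = x_1 \cdots x_d \cdot \psi(a_1) \psi(a_2)$. Using that $z$ commutes with $\delta$ (Relation~\eqref{NMPres3:4}), the middle factor $\delta^{n'_{(m')}-1} z^{m'_{(n')}}$ in the product $\psi(a_1)\psi(a_2)$ rearranges so that the exponents telescope:
\begin{equation*}
\psi(a_1) \psi(a_2) = x_{d+1} \cdots x_n \, y \cdot z^{(n' - m'_{(n')}) + m'_{(n')}} \delta^{(n'_{(m')}-1) + (m' - n'_{(m')})} = x_{d+1} \cdots x_n \, y \cdot z^{n'} \delta^{m'-1}.
\end{equation*}
Multiplying on the left by $x_1 \cdots x_d$, using $\delta = x_1 \cdots x_n y$, and commuting $z$ past $\delta$ once more, we get
\begin{equation*}
P_3 = \delta \cdot z^{n'} \delta^{m'-1} = z^{n'} \delta^{m'} = \Delta,
\end{equation*}
where the last equality is Lemma~\ref{DeltaEqProduct}.

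To conclude, I would use the cyclic-shift identity $P_{i+1} = \psi(a_i)^{-1} P_i \psi(a_i)$, which follows from $a_{i+d+2} = a_i$ in $G(d+2,d+2)$. Hence $P_{i+1} = P_i$ whenever $P_i$ commutes with $\psi(a_i)$. Since $P_3 = \Delta$ is central in $\mathcal{L}_*^*(n,m)$ by Proposition~\ref{DeltaCentral}, it commutes with every $\psi(a_j)$, so a straightforward induction using the shift identity yields $P_i = \Delta$ for all $i \in \{1, \ldots, d+2\}$. All defining relations of $G(d+2,d+2)$ are therefore satisfied, and $\psi$ extends to a morphism $G(d+2,d+2) \to \mathcal{L}_*^*(n,m)$.

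The main obstacle is the explicit identification $P_3 = \Delta$: the exponents $n' - m'_{(n')}$, $n'_{(m')} - 1$, $m'_{(n')}$, $m' - n'_{(m')}$ in the definitions of $\psi(a_1)$ and $\psi(a_2)$ are tailored so that, after commuting $z$ through $\delta$, the sums simplify to $n'$ and $m' - 1$. Once this identity is established, the centrality of $\Delta$ collapses the remaining $d+1$ cyclic equalities to a purely formal argument, avoiding case-by-case verification for each $i$.
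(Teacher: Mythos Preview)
Your proof is correct and follows essentially the same approach as the paper: compute one cyclic product of the $\psi(a_i)$'s explicitly, identify it as $\Delta$ via Lemma~\ref{DeltaEqProduct} using that $z$ and $\delta$ commute, and then propagate to all other cyclic shifts via the centrality of $\Delta$ (Proposition~\ref{DeltaCentral}). The only cosmetic difference is that the paper computes $P_2 = \psi(a_2)\cdots\psi(a_{d+2})\psi(a_1)$ rather than your $P_3$, but the computation and the conjugation argument are identical.
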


\begin{proof}
It is enough to show that the images of $a_1,a_2,\dots,a_{d+2}$ by $\psi$ satisfy the relations of Presentation \eqref{PresCircular}. We have
\begin{equation}\label{G(d+2,d+2)DeltaCenter}
    \begin{aligned}
        \varphi(a_2)\cdots \varphi(a_{d+2})\varphi(a_1)&=z^{m'_{(n')}}\delta^{m'-n'_{(m')}}\underbrace{x_1\cdots x_dx_{d+1}\cdots x_ny}_{=\delta}z^{n'-m'_{(n')}}\delta^{n'_{(m')}-1}\\
        &\underset{z\delta=\delta z}=\Delta \text{  by Lemma \ref{DeltaEqProduct}.}
    \end{aligned}
\end{equation}
By Lemma \ref{DeltaCentral}, the element $\varphi(a_2)\cdots \varphi(a_{d+2})\varphi(a_1)$ is central so that we have
\begin{equation*}
    \varphi(a_1)\cdots\varphi(a_{d+2})=\varphi(a_1)(\varphi(a_2)\cdots\varphi(a_{d+2})\varphi(a_1))\varphi(a_1)^{-1}=\varphi(a_2)\cdots\varphi(a_{d+2})\varphi(a_1).
\end{equation*}
Similarly, for all $i\in \llbracket 3,d+2\rrbracket$ we have
\begin{equation*}
    \begin{aligned}
        &\varphi(a_i)\cdots\varphi(a_{d+2})\varphi(a_1)\cdots\varphi(a_{i-1})\\&=(\varphi(a_2)\cdots\varphi(a_{i-1}))^{-1}\varphi(a_2)\cdots \varphi(a_{d+2})\varphi(a_1)(\varphi(a_2)\cdots \varphi(a_{i-1}))\\
        &=\varphi(a_2)\cdots\varphi(a_{d+2})\varphi(a_1).
    \end{aligned}
\end{equation*}
This concludes the proof.
\end{proof}

\begin{proposition}\label{LIsoG(d+2,d+2)}
The morphisms $\varphi$ and $\psi$ are inverse to each other.
\end{proposition}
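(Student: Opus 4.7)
The strategy is to verify both compositions $\varphi\circ\psi$ and $\psi\circ\varphi$ equal the identity on suitable generating sets. On the $G(d+2,d+2)$ side I will check the four relations $\varphi\psi(a_i)=a_i$ directly on all generators $a_1,\dots,a_{d+2}$. On the $\mathcal L_*^*(n,m)$ side, rather than checking $\psi\varphi$ on all of $\{x_1,\dots,x_n,y,z\}$, I will invoke Lemma~\ref{SmallGen} and only verify it on the $d+2$ generators $x_1,\dots,x_d,\,x_{d+1}\cdots x_n y,\,z$, which matches the cardinality of the circular group's generating set and thus makes the check symmetric.

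Before the generator-by-generator verifications, I would establish two simplification lemmas. First, using the relation $a_3\cdots a_{d+2}a_1a_2=\alpha$ in $G(d+2,d+2)$ together with the centrality of $\alpha$, one gets $a_3\cdots a_{d+2}a_1=\alpha a_2^{-1}$, so Equation~\eqref{ImDelta} simplifies to the clean formula $\varphi(\delta)=a_2^{-n'}\alpha^{m'_{(n')}}$. Similarly, Equation~\eqref{G(d+2,d+2)DeltaCenter} combined with the circular relations and Lemma~\ref{DeltaEqProduct} yields $\psi(\alpha)=\Delta=\delta^{m'}z^{n'}$. These two formulas are essentially the engine of the proof, as they reduce both $\varphi(\delta)$ and $\psi(\alpha)$ to commuting two-letter words.

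For $\varphi\circ\psi=\mathrm{id}$: the cases $i\in\{3,\dots,d+2\}$ are immediate since $\lceil(i-2)/d\rceil=1$ and $[i-2]_d=i-2$, giving $\varphi\psi(a_i)=\varphi(x_{i-2})=a_i$. For $a_2$ one expands $\psi(a_2)^{m'_{(n')}}$-wait, directly $\varphi\psi(a_2)=\varphi(z)^{m'_{(n')}}\varphi(\delta)^{m'-n'_{(m')}}$; since both factors are words in $a_2$ and the central $\alpha$, they commute, and collecting exponents yields $a_2^{n'n'_{(m')}+m'm'_{(n')}-n'm'}\alpha^{0}$, which by Lemma~\ref{CoprimeLemma} equals $a_2$. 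For $a_1$, write $\varphi(x_{d+1})\cdots\varphi(x_n)\varphi(y)=(\varphi(x_1)\cdots\varphi(x_d))^{-1}\varphi(\delta)=(a_3\cdots a_{d+2})^{-1}\varphi(\delta)=a_1 a_2\alpha^{-1}\varphi(\delta)=a_1a_2^{1-n'}\alpha^{m'_{(n')}-1}$; then the remaining factor $\varphi(z)^{n'-m'_{(n')}}\varphi(\delta)^{n'_{(m')}-1}=a_2^{n'-1}\alpha^{1-m'_{(n')}}$ (again via Lemma~\ref{CoprimeLemma}), and the two pieces multiply to $a_1$.

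For $\psi\circ\varphi=\mathrm{id}$: the cases $i\in[d]$ give $\psi\varphi(x_i)=\psi(a_{i+2})=x_i$ directly. For $z$, compute $\psi\varphi(z)=\psi(a_2)^{m'}\psi(\alpha)^{n'_{(m')}-m'}$; both factors are words in the commuting pair $\{z,\delta\}$, so their exponents simplify exactly as in the $a_2$ case above, leaving $z^{n'n'_{(m')}+m'm'_{(n')}-n'm'}\delta^{0}=z$. For $x_{d+1}\cdots x_ny$, by the same trick as for $a_1$ we have $\varphi(x_{d+1}\cdots x_n y)=a_1a_2^{1-n'}\alpha^{m'_{(n')}-1}$, so $\psi\varphi(x_{d+1}\cdots x_ny)=\psi(a_1)\cdot\psi(a_2)^{1-n'}\psi(\alpha)^{m'_{(n')}-1}$; the second and third factors multiply inside the commutative subgroup generated by $z$ and $\delta$ and collapse to $z^{m'_{(n')}-n'}\delta^{1-n'_{(m')}}$ via Lemma~\ref{CoprimeLemma}, which is exactly the inverse of the tail $z^{n'-m'_{(n')}}\delta^{n'_{(m')}-1}$ appearing in $\psi(a_1)$, yielding $x_{d+1}\cdots x_ny$. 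The main obstacle throughout is purely the bookkeeping of $z$- and $\alpha$-exponents, which invariably collapses after a single application of Lemma~\ref{CoprimeLemma}; there is no deeper structural obstruction.
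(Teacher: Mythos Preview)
Your proof is correct and follows essentially the same plan as the paper: both verify $\varphi\circ\psi=\mathrm{Id}_{G(d+2,d+2)}$ generator by generator, and both use Lemma~\ref{SmallGen} to reduce the $\mathcal L_*^*(n,m)$ side to the $d+2$ elements $x_1,\dots,x_d,\,x_{d+1}\cdots x_ny,\,z$. The only organisational difference is that the paper, instead of checking $\psi\circ\varphi=\mathrm{Id}$ on this generating set, argues that $\psi$ is surjective by exhibiting explicit preimages of these same elements (e.g.\ $z=\psi(a_2^{m'}\alpha^{n'_{(m')}-m'})$); combined with $\varphi\circ\psi=\mathrm{Id}$ this forces $\psi$ to be bijective with inverse $\varphi$. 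The preimages the paper writes down are of course exactly your $\varphi(z)$, $\varphi(x_{d+1}\cdots x_ny)$, etc., so the computations coincide. Your preliminary rewriting $a_3\cdots a_{d+2}a_1=\alpha a_2^{-1}$ and $\varphi(\delta)=a_2^{-n'}\alpha^{m'_{(n')}}$ is a clean shortcut that the paper does not make explicit; the paper instead manipulates powers of $(a_3\cdots a_{d+2}a_1)$ directly and repeatedly invokes its commutation with $a_2$, which makes the exponent bookkeeping slightly longer but lands on the same applications of Lemma~\ref{CoprimeLemma}.
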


\begin{proof}
Using Propositions \ref{LtoG(d+2,d+2)} and \ref{G(d+2,d+2)toL}, in order to show that $\varphi$ and $\psi$ are inverse to each other it is enough to show that $\varphi\circ\psi=Id_{G(d+2,d+2)}$ and that $\psi$ is surjective.\\

\noindent
\fbox{$\varphi\circ\psi=Id_{G(d+2,d+2)}$:}\\\\
\underline{$\varphi\circ \psi(a_1)=a_1$:}
We have 
\begin{equation}\label{Left1Iso(d+2,d+2)}
\begin{aligned}
\varphi(x_{d+1}\cdots x_ny)
&\underset{\eqref{eq2BtoG(d+2,d+2)}}{=}a_1(a_3\cdots a_{d+2}a_1)^{n'-1}a_1^{-n'}a_1^{n'}\alpha^{m'_{(n')}-n'}\\&=a_1(a_3\cdots a_{d+2}a_1)^{n'-1}\alpha^{m'_{(n')}-n'}.
\end{aligned}
\end{equation}

Similarly, using that $a_2$ commutes with $a_3\cdots a_{d+2}a_1$ and that $\alpha$ is central, we have
\begin{equation}\label{Right1Iso(d+2,d+2)}
    \begin{aligned}
        &\varphi(z^{n'-m'_{(n')}}\delta^{n'_{(m')}-1})\\
        &=a_2^{m'(n'-m'_{(n')})}\alpha^{(n'_{(m')}-m')(n'-m'_{(n')})}(a_3\cdots a_{d+2}a_1)^{n'(n'_{(m')}-1)}\alpha^{(m'_{(n')}-n')(n'_{(m')}-1)}\\
        &=a_2^{m'n'-m'm'_{(n')}}(a_3\cdots a_{d+2}a_1)^{n'(n'_{(m')}-1)}\alpha^{(n'_{(m')}-m'-n'_{(m')}+1)(n'-m'_{(n')})}\\
        &=a_2^{n'n'_{(m')}-1}(a_3\cdots a_{d+2}a_1)^{n'(n'_{(m')}-1)}\alpha^{(n'_{(m')}-m'-n'_{(m')}+1)(n'-m'_{(n')})} \text{  by Lemma \ref{CoprimeLemma}}\\
        &=(a_3\cdots a_{d+2}a_1)^{n'-1}\alpha^{(1-m')(n'-m'_{(n')})+n'n'_{(m')}-1}\\
        &=(a_3\cdots a_{d+2}a_1)^{n'-1}\alpha^{n'-m'_{(n')}} \text{  by Lemma \ref{CoprimeLemma}.}
    \end{aligned}
\end{equation}
Combining Equations \eqref{Left1Iso(d+2,d+2)} and \eqref{Right1Iso(d+2,d+2)}, we get 
\begin{equation*}
\begin{aligned}
    \varphi\circ\psi(a_1)&=\varphi(x_{d+1}\cdots x_nyz^{n'-m'_{(n')}}\delta^{n'_{(m')}-1})\\
    &=a_1(a_3\cdots a_{d+2}a_1)^{n'-1}\alpha^{m'_{(n')}-n'}(a_3\cdots a_{d+2}a_1)^{n'-1}\alpha^{n'-m'_{(n')}}\\
    &=a_1.
    \end{aligned}
\end{equation*}

\noindent
\underline{$\varphi\circ\psi(a_2)=a_2$:}
Using that $a_2$ commutes with $a_3\cdots a_{d+2}a_1$ and that $\alpha$ is central, we have
\begin{equation*}
\begin{aligned}
\varphi\circ\psi(a_2)&=\varphi(z^{m'_{(n')}}\delta^{m'-n'_{(m')}})\\
&=a_2^{m'm'_{(n')}}\alpha^{(n'_{(m')}-m')m'_{(n')}}(a_3\cdots a_{d+2}a_1)^{n'(m'-n'_{(m')})}\alpha^{(m'_{(n')}-n')(m'-n'_{(m')})}\\
        &=a_2^{m'm'_{(n')}}(a_3\cdots a_{d+2}a_1)^{n'(m'-n'_{(m')})}\alpha^{(n'_{(m')}-m')m'_{(n')}+(m'_{(n')}-n')(m'-n'_{(m')})}\\
        &=a_2^{1+n'(m'-n'_{(m')})}(a_3\cdots a_{d+2}a_1)^{n'(m'-n'_{(m')})}\alpha^{(n'_{(m')}-m)n'} \text{  by Lemma \ref{CoprimeLemma}}\\
        &=a_2
    \end{aligned}
\end{equation*}

\noindent
\underline{$\varphi\circ\psi(a_i)=a_i$ for all $i\geq 3$:}
For all $i=3,\dots,d+2$, we have $i-2=0\times d+(i-2)$ so that $\varphi\circ\psi(a_i)=\varphi(x_{i-2})=a_i$.
This shows that $\varphi\circ\psi=Id_{G(3,3)}$.\\

\noindent
\noindent
\fbox{$\psi$ is surjective:}\\\\
By Lemma \ref{SmallGen}, it is enough to show that $\{x_1,\dots,x_d,x_{d+1}\cdots x_ny,z\}\subset \im(\psi)$.\\\\
\noindent
\underline{$x_1,\dots,x_d\in \im(\psi):$} For all $i\in[d]$ we have $\psi(a_{i+2})=x_i$.\\

\noindent
\underline{$z\in \im(\psi)$:} Recall from the proof of Proposition \ref{G(d+2,d+2)toL} that $\psi(\alpha)=\Delta$. We thus have 
\begin{equation*}
\begin{aligned}
\psi(a_2^{m'}\alpha^{n'_{(m')}-m'}) &=(z^{m'_{(n')}}\delta^{m'-n'_{(m')}})^{m'}\Delta^{n'_{(m')}-m'}\\
&=(z^{m'_{(n')}}\delta^{m'-n'_{(m')}})^{m'}(\delta^{m'}z^{n'})^{n'_{(m')}-m'} \, \text{since $\Delta=\delta^{m'}z^{n'}$}\\
&\underset{z\delta=\delta z}{=}z^{m'm'_{(n')}+n'(n'_{(m')}-m')}\delta^{m'(m'-n'_{(m')})+m'(n'_{(m')}-m')}\\
&=z \, \,  \text{by Lemma \ref{CoprimeLemma} .}
\end{aligned}
\end{equation*}

\noindent
\underline{$x_{d+1}\cdots x_ny\in \im(\psi)$:} We have seen that $z\in \im(\psi)$. Moreover, by definition we have $z^{m'_{(n')}}\delta^{m'-n'_{(m')}}=\psi(a_2)$ and by Lemma \ref{DeltaEqProduct} we have $z^{n'}\delta^{m'}=\Delta=\psi(\alpha)$, which implies $\{\delta^{m'-n'_{(m')}},\delta^{m'}\}\subset\langle z,z^{m'_{(n')}}\delta^{m'-n'_{(m')}},z^{n'}\delta^{m'}\rangle\subset \im(\psi)$. In addition, since $n'_{(m')}$ is coprime with $m'$ we have that $m'$ and $m'-n'_{(m')}$ are coprime, hence $\delta\in \im(\psi)$.
Since $x_1,\dots,x_d\in \im(\psi)$, we get that $(x_1\cdots x_d)^{-1}\delta=x_{d+1}\cdots x_ny\in \im(\psi)$.\\
This concludes the proof.

\end{proof}

\begin{corollary}
For $n,m\in\N^*$ with $d=n\wedge m$, write $m=dm'$ and $n=dn'$. The groups $\mathcal L_*(n,m)\cong\mathcal L^*(m,n)$ admit the following presentation:
\begin{equation}\label{PresQuotientL1}
    \langle a_1,\dots a_{d+2}\,|\, a_1\cdots a_{d+2}=a_i\cdots a_{d+i+1},\, \forall i\in [d+2], \, a_2^{m'}=(a_1\cdots a_{d+2})^{m'-n'_{(m')}}\rangle,
\end{equation}
where indices are taken modulo $d+2$.
\end{corollary}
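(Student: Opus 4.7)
The plan is to deduce this directly as a Tietze-style corollary of Proposition \ref{LIsoG(d+2,d+2)}, using the quotient realising $\mathcal L_*(n,m)$ from $\mathcal L_*^*(n,m)$. No new computation should be needed beyond identifying the image of $z$ under the isomorphism $\varphi$ already constructed.

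First, by Proposition \ref{LIsoG(d+2,d+2)} the morphism $\varphi$ of Proposition \ref{LtoG(d+2,d+2)} provides an isomorphism $\mathcal L_*^*(n,m)\xrightarrow{\cong}G(d+2,d+2)$, under which $z$ corresponds to $\varphi(z)=a_2^{m'}\alpha^{n'_{(m')}-m'}$, with $\alpha=a_1\cdots a_{d+2}$. Next, by Remark \ref{SquareLinks} (i.e., the analog of the commutative square \eqref{CommFINDANAMEbraids} for torus necklace groups, which comes from Proposition \ref{LinkQuotient} applied to removing the component of $\mathrm L_*^*(n,m)$ circling the external core of the torus), we have $\mathcal L_*(n,m)\cong \mathcal L_*^*(n,m)/\llangle z\rrangle$. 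Transporting this quotient along $\varphi$ gives
\[
\mathcal L_*(n,m)\;\cong\; G(d+2,d+2)\big/\bigl\llangle a_2^{m'}\alpha^{n'_{(m')}-m'}\bigr\rrangle.
\]

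A presentation for the right-hand side is obtained by adjoining the single relation $a_2^{m'}\alpha^{n'_{(m')}-m'}=1$ to the defining presentation \eqref{PresCircular} of $G(d+2,d+2)$, that is, the relation $a_2^{m'}=(a_1\cdots a_{d+2})^{m'-n'_{(m')}}$. This is exactly Presentation \eqref{PresQuotientL1}. Finally, the isomorphism $\mathcal L_*(n,m)\cong\mathcal L^*(m,n)$ is provided by Proposition \ref{IsotopicLinks}(ii), so $\mathcal L^*(m,n)$ admits the same presentation. Since every step is essentially bookkeeping on top of the isomorphism $\varphi$ already verified in Proposition \ref{LIsoG(d+2,d+2)}, there is no real obstacle; the only point requiring minor care is to record the correct image $\varphi(z)$ from Proposition \ref{LtoG(d+2,d+2)} and rewrite the resulting relation in the form displayed in \eqref{PresQuotientL1}.
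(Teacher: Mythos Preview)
Your argument is correct and follows exactly the paper's approach: transport the quotient $\mathcal L_*^*(n,m)\to\mathcal L_*(n,m)$ through the isomorphism $\varphi$ of Propositions~\ref{LtoG(d+2,d+2)} and~\ref{LIsoG(d+2,d+2)}, using Remark~\ref{SquareLinks}, and read off the extra relation from $\varphi(z)=a_2^{m'}\alpha^{n'_{(m')}-m'}$. (In fact the paper's printed proof has a harmless typo, writing $\varphi(y)$ where $\varphi(z)$ is meant; your version is the correct one.)
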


\begin{proof}
It follows from combining Propositions \ref{LtoG(d+2,d+2)} and \ref{LIsoG(d+2,d+2)} with Remark \ref{SquareLinks}, as quotienting by the normal closure of $y$ in $\mathcal L_*^*(n,m)$ amounts to quotienting by the normal closure of $\varphi(y)=a_2^{m'}(a_1\cdots a_{d+2})^{n'_{(m')}-m'}$ in $G(d+2,d+2)$.
\end{proof}
In order to show Theorem \ref{LnIso}, we need the following lemma, which is an adaptation of \cite[Lemma 3.31]{BraidsJIgor}:

\begin{lemma}\label{LemmaForIso2Conj1}
For $n,m\in \N^*$ with $d=n\wedge m$, write $n=dn'$ and $m=dm'$. Let $x,y\in \Z$ be such that $xn'_{(m')}+y(m'-n'_{(m')})=1$ (these exist since $m'\wedge n'_{(m')}=1$, hence $m'\wedge (m'-n'_{(m')})=1$). In Presentation \eqref{PresQuotientL1}, the equality
\begin{equation}\label{ustxy}
((a_3\cdots a_{d+2}a_1)^xa_2^y)^{m'}=a_1\cdots a_{d+2}
\end{equation}
holds. In particular, the element $((a_3\cdots a_{d+2}a_1)^{x}a_2^{y})^{m'}$ is central.
\end{lemma}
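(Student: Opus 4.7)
The plan is to reduce the computation to an abelian calculation by identifying two commuting elements, then use the extra relation in Presentation \eqref{PresQuotientL1} together with the Bézout hypothesis to collapse everything to $\alpha := a_1\cdots a_{d+2}$.

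First I would set $u := a_3\cdots a_{d+2}a_1$ and $v := a_2$, and observe that $u$ and $v$ commute and that $uv = vu = \alpha$. The commutation is immediate from the circular relation $a_1\cdots a_{d+2} = a_2\cdots a_{d+2}a_1$ in Presentation \eqref{PresQuotientL1}, which already appears in the proof of Proposition \ref{LtoG(d+2,d+2)}. I would also record that $\alpha$ is central, since by Theorem \ref{CenterCircularGroups} it generates the center of $G(d+2,d+2)$, and centrality is preserved under the quotient map $G(d+2,d+2) \twoheadrightarrow \mathcal L_*(n,m)$.

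With $u$ and $v$ commuting, the computation becomes purely abelian: $(u^x v^y)^{m'} = u^{xm'} v^{ym'}$, and substituting $u = \alpha v^{-1}$ (legal because $u,v$ commute with each other and with $\alpha$) yields $\alpha^{xm'} v^{(y-x)m'}$. Now I would apply the extra relation $v^{m'} = a_2^{m'} = \alpha^{m' - n'_{(m')}}$ from Presentation \eqref{PresQuotientL1} to rewrite this as $\alpha^{xm' + (y-x)(m' - n'_{(m')})}$. Expanding and collecting terms, the exponent becomes $xn'_{(m')} + y(m' - n'_{(m')})$, which equals $1$ by hypothesis, so $(u^x v^y)^{m'} = \alpha$, proving \eqref{ustxy}. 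The ``In particular'' clause is then automatic since $\alpha$ is central in the quotient.

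The main (very mild) obstacle is bookkeeping the exponents in step four to confirm that the rearrangement matches the Bézout identity exactly; beyond that, everything is a straightforward application of commutativity and the defining relations, with no subtle use of the circular structure beyond the two facts that $uv = \alpha$ and $\alpha$ is central.
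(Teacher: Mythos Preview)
Your proposal is correct and takes essentially the same approach as the paper: both set $u=a_3\cdots a_{d+2}a_1$ and $v=a_2$, use that they commute with each other and with $\alpha$, expand $(u^xv^y)^{m'}$ abelianly, apply the extra relation $v^{m'}=\alpha^{m'-n'_{(m')}}$, and collapse the exponent via the B\'ezout identity. The only cosmetic difference is that the paper first records the intermediate identities $v^{n'_{(m')}}=u^{m'-n'_{(m')}}$ and $u^{m'}=\alpha^{n'_{(m')}}$ (the former is reused later in the proof of Theorem~\ref{LnIso}), whereas you substitute $u=\alpha v^{-1}$ directly; both routes are the same abelian computation.
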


\begin{proof}
Since $a_1\cdots a_{d+2}=(a_3\cdots a_{d+2}a_1)a_2=a_2(a_3\cdots a_{d+2}a_1)$, the equality $a_2^{m'}=(a_1\cdots a_{d+2})^{m'-n'_{(m')}}$ implies that $a_2^{m'}=a_2^{m'-n'_{(m')}}(a_3\cdots a_{d+2}a_1)^{m'-n'_{(m')}}$, which in turn implies 
\begin{equation}\label{eq1LemmaForIso2Conj1}
    a_2^{n'_{(m')}}=(a_3\cdots a_{d+2}a_1)^{m'-n'_{(m')}}.
\end{equation}
Now, this implies that 

\begin{equation}\label{eq2LemmaForIso2Conj1}
\begin{aligned}
    (a_3\cdots a_{d+2}a_1)^{m'}&=(a_3\cdots a_{d+2}a_1)^{n'_{(m')}}(a_3\cdots a_{d+2}a_1)^{m'-n'_{(m')}}\\
    &\underset{\eqref{eq1LemmaForIso2Conj1}}{=}(a_3\cdots a_{d+2}a_1)^{n'_{(m')}}a_2^{n'_{(m')}}\\
    &=(a_3\cdots a_{d+2}a_1a_2)^{n'_{(m')}} \text{  since $a_3\cdots a_{d+2}a_1$ and $a_2$ commute.}
    \end{aligned}
\end{equation}
We thus get that
\begin{equation*}
    \begin{aligned}
        ((a_3\cdots a_{d+2}a_1)^{x}a_2^{y})^{m'}&=(a_3\dots a_{d+2}a_1)^{xm'}a_2^{ym'} \text{  since $a_3\cdots a_{d+2}a_1$ and $a_2$ commute}\\
        &\underset{\eqref{eq2LemmaForIso2Conj1}}{=}(a_3\cdots a_{d+2}a_1a_2)^{xn'_{(m')}}a_2^{ym'}\\
        &\underset{\eqref{eq1LemmaForIso2Conj1}}{=}(a_3\cdots a_{d+2}a_1a_2)^{xn'_{(m')}+y(m'-n'_{(m')})}\\
        &=a_1\cdots a_{d+2}  \text{  since $xn_{(m)}+y(m-n_{(m)})=1$.}
    \end{aligned}
\end{equation*}

\end{proof}

From now on, write $\{b_1,\dots,b_{d+1}\}$ for the generators of $G((d+1),(d+1)m')$. 

\begin{proposition}\label{Iso2Conj1}
For $n,m\in \N^*$ with $d=n\wedge m$, write $m=dm'$ and $n=dn'$.\\
Let $x,y\in \Z$ be such that $xn'_{(m')}+y(m'-n'_{(m')})=1$ (these exist since $m'\wedge n'_{(m')}=1$, hence $m'\wedge (m'-n'_{(m')})=1$). The map 
\[
\begin{array}{rcl}
\{b_1,\dots,b_{d+1}\}&\xto\varphi& \mathcal L_*(n,m) \\ 
b_1 & \mapsto & (a_3\cdots a_{d+2}a_1)^xa_2^y(a_4\cdots a_{d+2}a_1)^{-1}\\
b_i & \mapsto & a_{i+2} \,\,\mathrm{for}\,\, i=2,\dots,d\\
b_{d+1}& \mapsto & a_1
\end{array}
\]
extends to a morphism $G(d+1,(d+1)m')\to \mathcal L_*(n,m)$.
\end{proposition}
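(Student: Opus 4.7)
The plan is to check that the images $\varphi(b_1),\dots,\varphi(b_{d+1})$ satisfy the defining relations of $G(d+1,(d+1)m')$ from Presentation \eqref{PresCircular}. Unpacking Definition \ref{CircularGroups} with $n=d+1$ and $m=(d+1)m'$, each product of $(d+1)m'$ consecutive generators starting at position $i$ (indices taken modulo $d+1$) equals $(b_ib_{i+1}\cdots b_{i+d})^{m'}$, since $(d+1)m'$ factors amount to exactly $m'$ full cycles around the $d+1$ generators. Thus verifying the relations reduces to showing that $u_i^{m'}=u_1^{m'}$ in $\mathcal L_*(n,m)$ for $i=2,\dots,d+1$, where I set $u_i:=\varphi(b_ib_{i+1}\cdots b_{i+d})$.

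The first step is to compute $u_1$ explicitly. Direct inspection gives $\varphi(b_2)\cdots\varphi(b_{d+1})=a_4a_5\cdots a_{d+2}a_1$, which cancels precisely the rightmost factor $(a_4\cdots a_{d+2}a_1)^{-1}$ of $\varphi(b_1)$. The telescoping yields $u_1=(a_3\cdots a_{d+2}a_1)^xa_2^y$, which is exactly the element appearing in Lemma \ref{LemmaForIso2Conj1}. That lemma then gives $u_1^{m'}=a_1\cdots a_{d+2}$ in Presentation \eqref{PresQuotientL1}. Moreover, $a_1\cdots a_{d+2}=\alpha$ is central in $G(d+2,d+2)$ by Theorem \ref{CenterCircularGroups}, so it remains central in the quotient $\mathcal L_*(n,m)\cong G(d+2,d+2)/\llangle a_2^{m'}(a_1\cdots a_{d+2})^{n'_{(m')}-m'}\rrangle$ given by Presentation \eqref{PresQuotientL1}.

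It then remains to establish $u_i^{m'}=u_1^{m'}$ for all $i\in\{2,\dots,d+1\}$. I would derive the conjugation identity $u_i=\varphi(b_1\cdots b_{i-1})^{-1}u_1\varphi(b_1\cdots b_{i-1})$ from the observation that, with indices taken modulo $d+1$, $\varphi(b_1\cdots b_{i-1})\cdot u_i=\varphi(b_1\cdots b_{i-1}b_i\cdots b_{i+d})=\varphi(b_1\cdots b_{d+1}b_1\cdots b_{i-1})=u_1\cdot\varphi(b_1\cdots b_{i-1})$. Raising to the $m'$-th power, $u_i^{m'}$ is a conjugate of $u_1^{m'}=a_1\cdots a_{d+2}$; since this element is central, $u_i^{m'}=u_1^{m'}$, which verifies all defining relations of $G(d+1,(d+1)m')$.

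I do not anticipate a serious obstacle: the whole argument boils down to the telescoping identification of $u_1$ followed by an application of Lemma \ref{LemmaForIso2Conj1} and the centrality of $a_1\cdots a_{d+2}$. The correction factor $(a_4\cdots a_{d+2}a_1)^{-1}$ in the definition of $\varphi(b_1)$ is engineered precisely to produce the cancellation that delivers the clean form of $u_1$; once that is observed, the remaining verification is essentially mechanical.
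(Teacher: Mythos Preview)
Your proof is correct and follows essentially the same route as the paper: compute $\varphi(b_1)\cdots\varphi(b_{d+1})=(a_3\cdots a_{d+2}a_1)^xa_2^y$ via the telescoping cancellation, apply Lemma \ref{LemmaForIso2Conj1} to get $u_1^{m'}=a_1\cdots a_{d+2}$, and then use centrality of this element together with the conjugation identity $u_i=\varphi(b_1\cdots b_{i-1})^{-1}u_1\varphi(b_1\cdots b_{i-1})$ to conclude that all $u_i^{m'}$ coincide. The only cosmetic difference is that you spell out why $a_1\cdots a_{d+2}$ is central by invoking Theorem \ref{CenterCircularGroups}, whereas the paper just asserts it.
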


\begin{proof}
We show that for all $i\in[d+1]$, the value of $(\varphi(b_i)\cdots \varphi(b_{d+2})\varphi(b_1)\cdots\varphi(b_{i-1}))^{m'}$ does not depend on $i$.
We have
\begin{equation}\label{G(d+1,(d+1)m')DeltaCenter}
    \begin{aligned}
        (\varphi(b_1)\dots\varphi(b_{d+1}))^{m'}&=((a_3\cdots a_{d+2}a_1)^xa_2^y(a_4\cdots a_{d+2}a_1)^{-1}a_4\cdots a_{d+2}a_1)^{m'}\\
        &=a_1\cdots a_{d+2} \text{  by Lemma \ref{LemmaForIso2Conj1}}.
    \end{aligned}
\end{equation}
Since $(\varphi(b_1)\cdots \varphi(b_{d+1}))^{m'}=a_1\cdots a_{d+2}$ is central, for all $i=2,\dots,d+1$ we have
\begin{equation*}
\begin{aligned}
    &(\varphi(b_i)\cdots\varphi(b_{d+1})\varphi(b_1)\cdots\varphi(b_{i-1}))^{m'}\\&=(\varphi(b_1)\cdots\varphi(b_{i-1}))^{-1}(\varphi(b_1)\cdots\varphi(b_{d+1}))^{m'}(\varphi(b_1)\cdots\varphi(b_{i-1}))\\
    &=(\varphi(b_1)\cdots\varphi(b_{d+1}))^{m'},
    \end{aligned}
\end{equation*}
which concludes the proof.
\end{proof}

\begin{proposition}\label{Iso2Conj2}
For $n,m\in \N^*$ with $d=n\wedge m$, write $m=dm'$ and $n=dn'$. The map

\[
\begin{array}{rcl}
\{a_1,\dots,a_{d+2}\}&\xto\psi &G(d+1,(d+1)m')\\ 
a_1 & \mapsto &b_{d+1}\\
a_{2} & \mapsto & (b_1\cdots b_{d+1})^{m'-n'_{(m')}}\\
a_{3} &\mapsto & (b_1\cdots b_{d+1})^{n'_{(m')}}(b_2\cdots b_{d+1})^{-1}\\
a_i&\mapsto & b_{i-2} \,\,\mathrm{for}\,\, i=4,\dots,d+2
\end{array}
\]
extends to a morphism $\mathcal L_*(n,m)\to G(d+1,(d+1)m')$.
\end{proposition}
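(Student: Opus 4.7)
The plan is to use Presentation \eqref{PresQuotientL1} of $\mathcal{L}_*(n,m)$ obtained in the corollary above, and to check that the images of the generators $a_1,\dots,a_{d+2}$ under $\psi$ satisfy its two families of defining relations: the $d+2$ circular relations $a_1\cdots a_{d+2}=a_i\cdots a_{d+i+1}$, and the torsion-like relation $a_2^{m'}=(a_1\cdots a_{d+2})^{m'-n'_{(m')}}$. Writing $\alpha=b_1\cdots b_{d+1}$ for the standard product in $G(d+1,(d+1)m')$, the key preliminary observation is that since $(d+1)\wedge (d+1)m'=d+1$, Theorem \ref{CenterCircularGroups} identifies $\alpha^{m'}$ as a generator of the center of $G(d+1,(d+1)m')$; this centrality will carry the whole argument.

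The core computation is to establish the single identity $\psi(a_1a_2\cdots a_{d+2})=\alpha^{m'}$. Substituting the definitions of $\psi(a_1),\psi(a_2),\psi(a_3)$ and merging the exponents $m'-n'_{(m')}$ and $n'_{(m')}$ produces $b_{d+1}\cdot \alpha^{m'}\cdot (b_2\cdots b_{d+1})^{-1}\cdot (b_2\cdots b_d)$. Pulling $\alpha^{m'}$ to the left by centrality, the remaining word telescopes to $1$: the leading $b_{d+1}$ cancels with the leftmost factor of $(b_2\cdots b_{d+1})^{-1}$, after which the suffix $(b_2\cdots b_d)$ cancels the rest. The whole point of the a priori mysterious exponents $n'_{(m')}$ and $m'-n'_{(m')}$ in the definitions of $\psi(a_2)$ and $\psi(a_3)$ is precisely to engineer this collapse to the central element $\alpha^{m'}$.

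Once $\psi(a_1\cdots a_{d+2})=\alpha^{m'}$ is known to be central, the remaining $d+1$ circular relations follow by the conjugation trick already used in Propositions \ref{G(d+2,d+2)toL} and \ref{Iso2Conj1}: for each $i\in[d+2]$,
$$\psi(a_i\cdots a_{d+2}a_1\cdots a_{i-1})=\psi(a_1\cdots a_{i-1})^{-1}\,\alpha^{m'}\,\psi(a_1\cdots a_{i-1})=\alpha^{m'}.$$
The torsion relation is then immediate: both $\psi(a_2^{m'})=\alpha^{m'(m'-n'_{(m')})}$ and $\psi\bigl((a_1\cdots a_{d+2})^{m'-n'_{(m')}}\bigr)=(\alpha^{m'})^{m'-n'_{(m')}}$ agree.

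The only real obstacle is the initial telescoping verification of $\psi(a_1\cdots a_{d+2})=\alpha^{m'}$; everything else is formal once this element is known to be central. I expect no subtlety in the universal property step, since verifying both relation types of Presentation \eqref{PresQuotientL1} on the images automatically extends $\psi$ to the claimed homomorphism $\mathcal{L}_*(n,m)\to G(d+1,(d+1)m')$.
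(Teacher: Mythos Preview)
Your proposal is correct and follows essentially the same approach as the paper: both use Presentation \eqref{PresQuotientL1}, compute $\psi(a_1)\cdots\psi(a_{d+2})=(b_1\cdots b_{d+1})^{m'}$ via the same telescoping cancellation after invoking centrality of $\alpha^{m'}$, deduce the remaining circular relations by the conjugation trick, and verify the torsion relation by direct substitution. The only cosmetic difference is that you cite Theorem \ref{CenterCircularGroups} explicitly for the centrality of $\alpha^{m'}$, whereas the paper simply asserts it.
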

\begin{proof}
It is enough to show that the images of $a_1,a_2,\dots a_{d+2}$ by $\psi$ satisfy the relations of Presentation \eqref{PresQuotientL1}.\\

\noindent
\fbox{$a_1\cdots a_{d+2}=a_i\cdots a_{d+2}a_1\cdots a_{i-1}$,  $i=2,\dots,d+2$:}
We have 
\begin{equation}\label{ImPsiCentral}
    \begin{aligned}
     \psi(a_1)\cdots \psi(a_{d+2})&=b_{d+1}(b_1\cdots b_{d+1})^{m'-n'_{(m')}}(b_1\cdots b_{d+1})^{n'_{(m')}}(b_2\cdots b_{d+1})^{-1}b_2\cdots b_d\\
     &=(b_1\cdots b_{d+1})^{m'} \text{  since $(b_1\cdots b_d)^{m'}$ is central}.
    \end{aligned}
\end{equation}
Since $\psi(a_1)\cdots\psi(a_{d+2})=(b_1\cdots b_{d+1})^{m'}$ is central, for all $i=2,\dots,d+2$ we have 
\begin{equation*}
    \begin{aligned}
        &\psi(a_i)\cdots\psi(a_{d+2})\psi(a_1)\cdots\psi(a_{i-1})\\&=(\psi(a_1)\cdots\psi(a_{i-1}))^{-1}(\psi(a_1)\cdots\psi(a_{d+2}))(\psi(a_1)\cdots \psi(a_{i-1}))\\
        &=\psi(a_1)\cdots \psi(a_{d+2}).
    \end{aligned}
\end{equation*}

\noindent
\fbox{$a_2^{m'}=(a_1\cdots a_{d+2})^{m'-n'_{(m')}}$:}
We have
\begin{equation*}
    \begin{aligned}
        \psi(a_2)^{m'}\underset{\eqref{ImPsiCentral}}{=}(b_1\cdots b_{d+1})^{m'(m'-n'_{(m')})}=(\psi(a_1)\cdots \psi(a_{d+2}))^{m'-n'_{(m')}}.
    \end{aligned}
\end{equation*}
This concludes the proof.
\end{proof}

\begin{proof}[Proof of Theorem \ref{LnIso}]

Using Lemmas \ref{Iso2Conj1} and \ref{Iso2Conj2}, it is enough to show that $\varphi\circ\psi=Id_{\mathcal L_*(n,m)}$ and $\psi\circ\varphi=Id_{G((d+1),(d+1)m')}$.\\\\
\fbox{$\varphi\circ\psi=Id_{\mathcal L_*(n,m)}$:}\\\\
\underline{$\varphi\circ\psi(a_1)=a_1$:} We have $\varphi\circ\psi(a_1)=\varphi(b_{d+1})=a_1.$\\\\
\noindent
\underline{$\varphi\circ\psi(a_2)=a_2$:} We have
\begin{equation*}
\begin{aligned}
&\varphi\circ\psi(a_2)=\varphi((b_1\cdots b_{d+1})^{m'-n'_{(m')}})\\&=((a_3\cdots a_{d+2}a_1)^xa_2^y)^{m'-n'_{(m')}}\\
&=(a_3\cdots a_{d+2}a_1)^{x(m'-n'_{(m')})}a_2^{y(m'-n'_{(m')})} \text{  since $(a_3\cdots a_{d+2}a_1)a_2=a_2(a_3\cdots a_{d+2}a_1)$}\\&\underset{\eqref{eq1LemmaForIso2Conj1}}=a_2^{xn'_{(m')}}a_2^{y(m'-n'_{(m')})}=a_2 \, \, \text{since} \, \, xn'_{(m')}+y(m'-n'_{(m')})=1.
\end{aligned}
\end{equation*}

\noindent
\underline{$\varphi\circ\psi(a_3)=a_3$:} We have
\begin{equation*}
\begin{aligned}
&\varphi\circ\psi(a_3)=\varphi((b_1\cdots b_{d+1})^{n'_{(m')}}(b_2\cdots b_{d+1})^{-1})\\
&=((a_3\cdots a_{d+2}a_1)^xa_2^y)^{n'_{(m')}}(a_4\cdots a_{d+2}a_1)^{-1}\\
&=(a_3\cdots a_{d+2}a_1)^{xn'_{(m')}}a_2^{yn'_{(m')}}(a_4\cdots a_{d+2}a_1)^{-1} \text{  since $(a_3\cdots a_{d+2}a_1)a_2=a_2(a_3\cdots a_{d+2}a_1)$}\\
&\underset{\eqref{eq1LemmaForIso2Conj1}}=(a_3\cdots a_{d+2}a_1)^{xn'_{(m')}}(a_3\cdots a_{d+2}a_1)^{y(m'-n'_{(m')})}(a_4\cdots a_{d+2}a_1)^{-1}\,\\
&=a_3\, \, \text{since} \, \, xn'_{(m')}+y(m'-n'_{(m')})=1.
\end{aligned}
\end{equation*}

\noindent
\underline{$\varphi\circ\psi(a_i)=a_i$ for $i=4,\dots,d+2$:} For all $i\in\llbracket 4,d+2\rrbracket$ we have $i-2\in\llbracket 2,d\rrbracket$, which implies that $\varphi\circ\psi(a_i)=\varphi(b_{i-2})=a_i.$\\

\noindent
\fbox{$\psi\circ\varphi=Id_{G((d+1),(d+1)m')}$:}\\\\
\underline{$\psi\circ\varphi(b_1)=b_1$:} We have
\begin{equation*}
\begin{aligned}
\psi\circ\varphi(b_1)&=\psi((a_3\cdots a_{d+2}a_1)^xa_2^y(a_4\cdots a_{d+2}a_1)^{-1})\\&=(b_1\cdots b_{d+1})^{xn'_{(m')}}(b_1\cdots b_{d+1})^{y(m'-n'_{(m')})}(b_2\cdots b_{d+1})^{-1}
\\&=b_1 \, \, \text{since} \, \, xn'_{(m')}+y(m'-n'_{(m')})=1.
\end{aligned}
\end{equation*}

\noindent
\underline{$\psi\circ\varphi(b_i)=b_i$ for al $i=2,\dots,d+1$:} For all $i\in\llbracket 2,d\rrbracket$ we have $i+2\in\llbracket 4,d+2\rrbracket$, which implies that $\psi\circ\varphi(b_i)=\psi(a_{i+2})=b_i$. Finally, we have $\psi\circ\varphi(b_{d+1})=\psi(a_1)=b_{d+1}$. This concludes the proof.
\end{proof}

\begin{corollary}
The center of $\mathcal L_*^*(n,m), \mathcal L_*(n,m),\mathcal L^*(n,m)$ and $\mathcal L(n,m)$ is cyclic, generated by $\Delta$.
\end{corollary}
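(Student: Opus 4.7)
The plan is to reduce each of the four cases to the corresponding circular group isomorphism established earlier in this section and then apply Theorem \ref{CenterCircularGroups} to identify the generator of the center. Throughout, write $d = n\wedge m$, $n = dn'$ and $m = dm'$. Recall from Proposition \ref{DeltaCentral} and Remark \ref{Remark/Notation} that $\Delta$ is central in $\mathcal L_*^*(n,m)$ and remains central in each of its quotients $\mathcal L_*(n,m)$, $\mathcal L^*(n,m)$, and $\mathcal L(n,m)$, so the only task is to verify in each case that $\Delta$ generates the center rather than merely lying in it.

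For $\mathcal L_*^*(n,m)$, I would invoke the isomorphism $\psi\colon G(d+2,d+2) \to \mathcal L_*^*(n,m)$ from Proposition \ref{LIsoG(d+2,d+2)}. Equation \eqref{G(d+2,d+2)DeltaCenter} already records $\psi(a_1\cdots a_{d+2}) = \Delta$. Since $(d+2)/\bigl((d+2)\wedge(d+2)\bigr)=1$, Theorem \ref{CenterCircularGroups} gives $Z(G(d+2,d+2)) = \langle a_1\cdots a_{d+2}\rangle$, and transporting via $\psi$ yields $Z(\mathcal L_*^*(n,m)) = \langle \Delta \rangle$.

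For $\mathcal L_*(n,m)$, the map $\varphi\colon G(d+1,(d+1)m') \to \mathcal L_*(n,m)$ from Proposition \ref{Iso2Conj1} is an isomorphism by Theorem \ref{LnIso}. Applying Theorem \ref{CenterCircularGroups} with parameters $d+1$ and $(d+1)m'$, the center $Z(G(d+1,(d+1)m'))$ is cyclic, generated by $(b_1\cdots b_{d+1})^{m'}$ since $(d+1)m'/\bigl((d+1)\wedge(d+1)m'\bigr) = m'$. Equation \eqref{G(d+1,(d+1)m')DeltaCenter} computes $\varphi\bigl((b_1\cdots b_{d+1})^{m'}\bigr) = a_1\cdots a_{d+2}$, which by Remark \ref{Remark/Notation} is precisely the image of $\Delta$ in the quotient $\mathcal L_*(n,m)$ of $\mathcal L_*^*(n,m)$. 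Hence $Z(\mathcal L_*(n,m)) = \langle \Delta \rangle$. The case $\mathcal L^*(n,m)$ follows immediately via the isomorphism $\mathcal L^*(n,m) \cong \mathcal L_*(m,n)$ from Proposition \ref{IsotopicLinks}(ii), under which the two $\Delta$'s correspond.

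Finally, for $\mathcal L(n,m)$, I would combine Theorem \ref{NecklaceGroupsAreJBraidgroups}(iv) with Theorem \ref{IsoBraidsJIgor}(iv) (or its generalised counterpart from Section \ref{SectionGeneralised}) to obtain $\mathcal L(n,m) \cong \B(n,m) \cong G(n,m)$ with $x_i \leftrightarrow a_i$. By Remark \ref{Remark/Notation}(iii), the element $\Delta = (x_1\cdots x_n)^{m'}$ corresponds to $(a_1\cdots a_n)^{m'}$, which generates $Z(G(n,m))$ by Theorem \ref{CenterCircularGroups} since $m/(n\wedge m) = m'$. The whole argument is essentially bookkeeping: the two substantive identifications $\psi(a_1\cdots a_{d+2}) = \Delta$ and $\varphi\bigl((b_1\cdots b_{d+1})^{m'}\bigr) = \Delta$ have already been worked out in the proofs of the circular group isomorphisms, so I do not anticipate any genuine obstacle.
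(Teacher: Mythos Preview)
Your proposal is correct and follows essentially the same route as the paper's proof: transport $\Delta$ through the already-constructed circular-group isomorphisms (Equations \eqref{G(d+2,d+2)DeltaCenter} and \eqref{G(d+1,(d+1)m')DeltaCenter}) and then quote Theorem \ref{CenterCircularGroups}. One small point: for $\mathcal L^*(n,m)$ you pass through the isotopy $\mathcal L^*(n,m)\cong \mathcal L_*(m,n)$ and assert that ``the two $\Delta$'s correspond'', but that correspondence is not established anywhere --- the isotopy isomorphism is purely topological and is not given on the algebraic generators. The paper instead says ``the exact same argument applies'', meaning one reruns the construction of Propositions \ref{Iso2Conj1}--\ref{Iso2Conj2} with the roles of $y$ and $z$ swapped, which directly sends $\Delta$ to the generator of the center of the relevant circular group; this sidesteps the need to compare $\Delta$'s across the isotopy. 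Also, in your $\mathcal L_*(n,m)$ paragraph, the identification of $a_1\cdots a_{d+2}$ with the image of $\Delta$ really comes from Equation \eqref{G(d+2,d+2)DeltaCenter} (passed to the quotient) rather than from Remark \ref{Remark/Notation}.
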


\begin{proof}
For this proof, write $\Delta_*^*$ for $\Delta\in \mathcal L_*^*(n,m)$ and $\Delta_*$ for its image in $\mathcal L_*(n,m)$.
By Equation \eqref{G(d+2,d+2)DeltaCenter}, the image of $\Delta_*^*$ by the isomorphism $\mathcal L_*^*(n,m)\to G(d+2,d+2)$ is $a_1\cdots a_{d+2}$, which generates the center by Theorem \ref{CenterCircularGroups}. Under this identification, by Equation \eqref{G(d+1,(d+1)m')DeltaCenter} the image of $\Delta_*$ by the isomorphism $\mathcal L_*(n,m)\to G((d+1),(d+1)m')$ is $(b_1\cdots b_{d+1})^{m'}$, which generates the center by Theorem \ref{CenterCircularGroups}. The exact same argument applies for $\mathcal L^*(n,m)$. Finally, the statement for $\mathcal L(n,m)$ is the content of Remark \ref{Remark/Notation} (iii).
\end{proof}
\begin{remark}
The fact that the center of a link group is cyclic is a standard fact in Knot Theory (see \cite[Theorem 1]{MurasugiCenter}).
\end{remark}

\end{document}